\newcommand{\pic}[2]{\raisebox{-.5\height}{\includegraphics[scale=#2]{#1}}}
\def\Xor{\pic{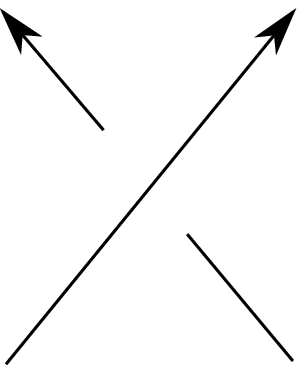} {.250}}
\def\Yor{\pic{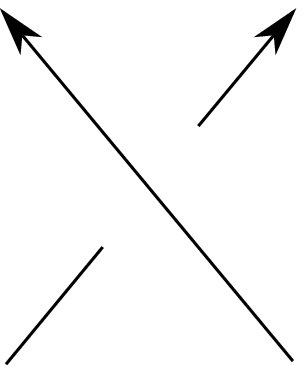} {.250}}
\def\Ior{\pic{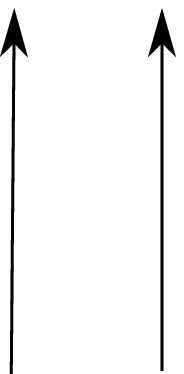} {.250}}
\def\unknot{\pic{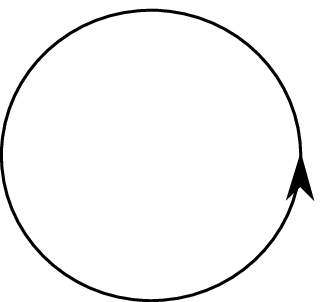} {.250}}
\def\Idor{\pic{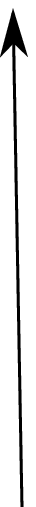} {.250}}
\def\Rcurlor{\pic{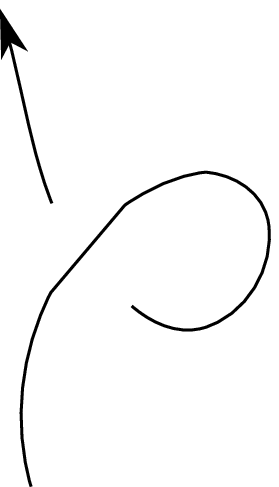} {.250}}
\def\Lcurlor{\pic{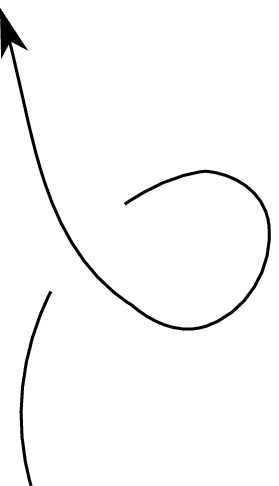} {.250}}
\newcommand{\bc}{\begin{center}}
\newcommand{\ec}{\end{center}}
\newcommand{\be}{\begin{equation}}
\newcommand{\ee}{\end{equation}}
\newcommand{\beqn}{\begin{eqnarray*}}
\newcommand{\eeqn}{\end{eqnarray*}}
\newcommand{\arrmu}{\overrightarrow{\mu}}
\newcommand{\arrlam}{\overrightarrow{\lambda}}
\newcommand{\DT}{\operatorname{DT}}
\newcommand{\PT}{\operatorname{PT}}
\newcommand{\sZ}{\mathsf{Z}}
\newcommand{\cL}{\mathcal{L}}
\newcommand{\calP}{\mathcal{P}}
\newcommand{\sP}{\mathsf{P}}
\newcommand{\sH}{\mathsf{H}}
\newcommand{\sM}{\mathsf{M}}
\newcommand{\sO}{\mathsf{O}}
\newcommand{\QQ}{\mathbb{Q}}
\newcommand{\CC}{\mathbb{C}}
\newcommand{\RR}{\mathbb{R}}
\newcommand{\sW}{\mathsf{W}}
\newcommand{\PP}{\mathbb{P}}
\newcommand{\ZZ}{\mathbb{Z}}
\newcommand{\NN}{\mathbb{N}}
\newcommand{\calO}{\mathcal{O}}
\newcommand{\calA}{\mathcal{A}}
\newcommand{\calC}{\mathcal{C}}
\newcommand{\calE}{\mathcal{E}}
\newcommand{\calF}{\mathcal{F}}
\newcommand{\calH}{\mathcal{H}}
\newcommand{\calI}{\mathcal{I}}
\newcommand{\calT}{\mathcal{T}}
\newcommand{\writhe}{\mathrm{wr}}
\newcommand{\sideperp}{\vdash}
\newcommand{\Tr}{\operatorname{Tr}}
\newcommand{\tr}{\mathrm{t}}
\newcommand{\tp}{\mathrm{top}}
\newcommand{\low}{\mathrm{low}}
\newcommand{\Hilb}{\operatorname{Hilb}}
\newcommand{\id}{\operatorname{id}}
\newcommand{\Ybar}{\overline{Y}}
\newcommand{\Cbar}{\overline{C}}
\theoremstyle{plain}
\newtheorem{lem}{Lemma}[section]
\newtheorem{thm}[lem]{Theorem}
\newtheorem{prop}[lem]{Proposition}
\newtheorem{cor}[lem]{Corollary}
\theoremstyle{definition}
\newtheorem*{remark}{Remark}
\begin{document}

\title{Stable pairs and the HOMFLY polynomial}
\author{Davesh Maulik}
\maketitle
\begin{abstract}
Given a planar curve singularity, we prove a conjecture of Oblomkov-Shende, relating the geometry of its Hilbert scheme of points to the HOMFLY polynomial
of the associated algebraic link.  More generally, we prove an
extension of this conjecture, due to Diaconescu-Hua-Soibelman, relating stable pair invariants on the conifold to the colored HOMFLY polynomial of the
algebraic link.  Our proof uses wall-crossing techniques to prove a blowup identity on
the algebro-geometric side.  We prove a matching identity for the colored HOMFLY polynomials of a link using skein-theoretic techniques.
\end{abstract}

\tableofcontents
\section{Introduction}
Given a planar curve singularity, Oblomkov and Shende \cite{oblomkov-shende} conjectured a precise relationship between
the geometry of its Hilbert scheme of points and the HOMFLY polynomial of the associated 
link.  In this paper, we give a proof of this conjecture, as well as a generalization to colored HOMFLY polynomial invariants, recently proposed by Diaconescu, Hua, and Soibelman
\cite{diaconescu-hua-soibelman}.
\subsection{Hilbert schemes of planar curve singularities}
We first recall the original formulation of \cite{oblomkov-shende}.
Let $C \subset \CC^2$ be a reduced curve, equipped with a distinguished point $p \in C$.
We can consider
the punctual Hilbert scheme $C_{p}^{[n]}$ of length $n$ subschemes of $C$ which are set-theoretically supported at $p$.
We define a constructible function
$$m: C_{p}^{[n]} \rightarrow \NN$$
such that, for a subscheme $Z \subset C$ supported at $p$, the value $m([Z])$ is the minimal number of 
generators of the defining ideal $I_{Z,p} \subset \widehat{\calO}_{C,p}$.
Consider the two-variable generating function
$$Z_{C,p}(v,s) = \sum_{n \geq 0} s^{2n} \int_{C_{p}^{[n]}}(1-v^2)^{m} d\chi.$$
In the above expression, $\int d\chi$ refers to the weighted Euler characteristic of the constructible function
on an algebraic variety, defined by
$$\int f d\chi = \sum_{n} n \chi_{\tp}(f^{-1}(n)).$$

On the other hand, let $\cL_{C,p}$ denote the link of the plane curve singularity at $p$, obtained by intersecting $C$ with a small
three-sphere around $p \in \CC^2$.  Given an oriented link $\cL \subset S^3$, the HOMFLY polynomial 
$\sP(\cL;v,s) \in \ZZ[v^{\pm 1}, (s - s^{-1})^{\pm}]$
is characterized
by the crossing relation
\bc
\qquad{$v \sP\left( \Yor\right)  -\ v^{-1}\sP\left(\Xor\right) \qquad =\qquad{(s-s^{-1})}\sP\left( \Ior\right) \ ,$}\ec
and the normalization
$$\sP(\mathrm{unknot}) = \frac{v - v^{-1}}{s - s^{-1}}.$$

Let $\mu$ denote the Milnor number of the singularity $(C,p)$.
Our first result is the following relation between these two generating functions, first stated as Conjecture $2'$ in \cite{oblomkov-shende}:
\begin{thm}\label{mainthm1}
$$\sP(\cL_{C,p};v,s) = \left(\frac{v}{s}\right)^{\mu - 1} Z_{C,p}(v,s).$$
\end{thm}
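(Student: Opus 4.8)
The plan is to deduce Theorem~\ref{mainthm1} from the colored refinement conjectured by Diaconescu--Hua--Soibelman, and to prove that refinement by induction on the Milnor number $\mu$, the inductive step being a blowup. Blowing up $\CC^2$ at $p$ replaces $(C,p)$ by the strict transform $\widetilde C$, which meets the exceptional $\PP^1$ at finitely many points $p_1,\dots,p_r$ with multiplicities $m_1,\dots,m_r$ summing to the multiplicity $m$ of $(C,p)$; one checks each $(\widetilde C,p_i)$ has strictly smaller Milnor number, and the base case $m=1$ is a smooth point, where the identity is a direct computation. Theorem~\ref{mainthm1} itself is the specialization of the colored statement in which every branch of the link is colored by the one-box partition; passing through the colored version is forced on us, because the blowup naturally produces links decorated by the multiplicity data $m_i$.

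Both sides will be shown to satisfy the \emph{same} blowup recursion, each recursion being a separate theorem. On the algebro-geometric side I would first reinterpret $Z_{C,p}$, together with its colored/nested analogues tracking subschemes with prescribed multiplicities along the branches of $C$, as generating functions of stable-pair invariants on a local threefold (a neighborhood of $C$ inside the resolved conifold). The weighting of the punctual Hilbert scheme by the minimal number of generators $m$ is precisely the feature that makes this reinterpretation work: it is the correction needed to pass from subschemes of the singular curve to moduli of stable pairs, which is the category in which a wall-crossing formula across a space of stability conditions is available. Running a sequence of wall-crossings from the chamber adapted to $(C,p)$ to the chamber adapted to the blowup then produces the identity expressing the colored series of $(C,p)$ in terms of those of the $(\widetilde C,p_i)$, together with explicit correction factors.

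On the skein-theoretic side I would prove a matching identity topologically. The link $\cL_{C,p}$ is, up to framing, a satellite of the $r$-component Hopf link formed by the fibers of the circle bundle over the exceptional $\PP^1$, the $i$-th fiber being replaced by $\cL_{\widetilde C,p_i}$ cabled according to $m_i$. Expanding each companion pattern in the HOMFLY skein of the annulus in the basis of the symmetrizer idempotents, and using the meridian-map (Hecke-algebra eigenvalue) relations together with the twist and framing-change formulas for colored HOMFLY and the known colored HOMFLY of the Hopf link, one obtains a formula for $\sP(\cL_{C,p};v,s)$ in terms of the colored invariants of the $\cL_{\widetilde C,p_i}$. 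I would then check that, after the substitution identifying the stable-pair variables with $(v,s)$, this skein-theoretic formula agrees term by term with the algebro-geometric blowup identity; here the classical Milnor-number relation $\mu(C,p)-1=m(m-1)+\sum_i\bigl(\mu(\widetilde C,p_i)-1\bigr)$ is exactly what makes the prefactor $(v/s)^{\mu-1}$ distribute correctly across the recursion. With the base case in hand, Theorem~\ref{mainthm1} follows.

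I expect the algebro-geometric blowup identity to be the main obstacle. The punctual Hilbert scheme of $C$ at $p$ carries no visible trace of the blowup, so the content is twofold: to pin down the moduli problem (stable pairs on the local threefold, carrying the multiplicity decorations) whose invariants both recover $Z_{C,p}$ after the generator-weighting and genuinely transform under wall-crossing, and then to control the chamber structure so that the accumulated wall-crossing contributions reassemble -- after the change of variables -- into the clean product that the skein side predicts. By contrast, the skein identity should be a finite if intricate calculation in the HOMFLY skein of the annulus, and the reduction of Theorem~\ref{mainthm1} to the colored statement, along with the smooth-point base case, are essentially formal.
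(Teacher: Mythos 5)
Your proposal follows essentially the same route as the paper: reduce to the colored Diaconescu--Hua--Soibelman statement, prove matching blowup identities --- on the geometric side via stable pairs on the resolved conifold and wall-crossing (realized in the paper as flop invariance of the framed stable-pairs series), on the link side via skein theory, meridian eigenvalues, and the colored Hopf link --- and then induct down to the resolved case while tracking the monomial prefactor via Milnor-number bookkeeping. The one small imprecision is the base case: since each blowup adjoins the exceptional curve as a new colored component, the induction bottoms out not at a smooth point but at a normal crossing with arbitrary partition labels, which the paper handles via the two-leg topological vertex (Lemma \ref{basecase}).
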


An immediate consequence of this theorem is that the right-hand side only depends on the topological type of the singularity, rather than its analytic structure.  
In Corollary \ref{severiobservation}, we will state an application (due to V. Shende) of this observation, for Severi strata of versal families of locally planar curves.

\subsection{Framed stable pairs}

In order to establish Theorem \ref{mainthm1}, we need a certain rephrasing of the conjecture, due to Diaconescu, Hua, and Soibelman \cite{diaconescu-hua-soibelman}.  Motivated by large $N$ duality considerations of the paper \cite{diaconescu-vivek-vafa}, 
the authors use a beautiful wall-crossing argument to reinterpret the above Hilbert scheme integrals in terms of the moduli space of stable pairs on the resolved conifold.

Let $$\pi: Y\rightarrow \PP^1$$ denote the total space of the rank two bundle
$$\calO_{\PP^{1}}(-1) \oplus \calO_{\PP^{1}}(-1)$$
on $\PP^1$.  It is a small resolution of a threefold double-point singularity, with exceptional locus given
by the zero section
$$E \cong \PP^1.$$

Let $\pi^{-1}(0)$ denote the fiber over $0 \in \PP^1$.
If we fix an identification of $(\CC^2,p)$ with $(\pi^{-1}(0), 0)$, we have an embedding
$C \hookrightarrow \pi^{-1}(0) \hookrightarrow Y$.

By definition, a stable pair on $Y$ is a
pure one-dimensional sheaf $\calF$ on $Y$ with a section
$$\sigma:  \calO_Y \rightarrow \calF,$$
such that
$\mathrm{Coker}(\sigma)$ is zero-dimensional.

We say that the stable pair is $C$-framed
if, upon restriction to the open subset $Y \backslash E$,
the pair $(\calF,\sigma)$ is isomorphic to the restriction of the surjection
$$\calO_{Y} \twoheadrightarrow \calO_{C}.$$

\begin{remark}
This definition differs slightly from that in \cite{diaconescu-hua-soibelman} since they allow nontrivial cokernel along $C$.  In their paper, $C$ has a unique singular point, so this does not make any major difference in formulas.
\end{remark}

Let $\Ybar$ be a projective compactification of $Y$ and $\Cbar$ the closure of $C$.  Any $C$-framed stable pair admits a unique extension to a $\Cbar$-framed stable pair $(\overline{\calF}, \overline{\sigma})$ on $\Ybar$.
Given integers $(r,n)$,
we define the moduli space $\calP(Y,C,r,n)$ of $C$-framed stable pairs $(\calF,\sigma)$
such that
\begin{enumerate}
\item[(i)] the support of $\calF$ has generic multiplicity $r$ along the zero section $E$
\item[(ii)] $\chi(\overline{\calF}) = n + \chi(\calO_{\Cbar})$.
\end{enumerate}
This moduli space is a locally closed subscheme of the space of stable pairs on $\Ybar$ and is independent of the choice of compactification.

We study the generating function
$$\sZ(Y,C;q,Q) = \sum_{r,n} q^{n}Q^{r} \chi_{\tp}(\calP(Y,C,r,n))$$
and its normalized version
$$\sZ'(Y,C;q,Q) = \frac{\sZ(Y,C;q,Q)}{\sZ_Y(q,Q)}$$
where
$$\sZ_Y(q,Q) = \sZ(Y,\emptyset;q,Q) = \prod_{k} (1+q^{k}Q)^{k}$$
is the topological PT series of the resolved conifold.

Then Theorem $1.1$ of \cite{diaconescu-hua-soibelman} is the equality
$$\sZ'(Y,C;q,Q)= Z_{C,p}(v,s)$$
after the change of variables $q = s^{2}, Q = -v^{2}$. 
\subsection{Colored variant}

The advantage of working with stable pairs is that it allows us to formulate a more general statement, first conjectured in \cite{diaconescu-hua-soibelman}, following work of Oblomkov-Shende in the $Q=0$ limit.

Suppose $C$ consists of irreducible components $C_1, \dots, C_r$ and that, for each component $C_i$
we have associated a partition 
$$\mu_i = \{\mu^{(1)}_{i}, \dots, \mu^{(\ell_{i})}_{i}\}.$$
We will use the shorthand
$$\arrmu$$ to denote the vector of partitions $(\mu_1,\dots, \mu_r)$.
For simplicity of notation, we will always 
assume\footnote{Up to formal isomorphism, every planar curve singularity can be expressed in this form, so this is a harmless assumption.}
 in this paper that each irreducible component of $C$ gives a unique analytic branch of $C$ at $p$.
Fix a toric affine chart of $0 \in Y$ with coordinates $(x,y,z)$ such that the projection $\pi$ is given by projection to the $z$-axis.
Let $f_i(x,y)$ be the defining equation of $C_i$; we define 
the pure one-dimensional subscheme 
$C_{i, \mu_{i}}$ by the 
equations
$$z^{j-1}f_{i}(x,y)^{\mu^{(j)}_{i}}\, j = 1, \dots, \ell_{i}.$$
Let $C_{\arrmu}$ denote the unique pure one-dimensional subscheme of $Y$
supported on $C$ and which agrees with $C_{i,\mu_{i}}$ generically on each $C_i$.

As before, we define $C_{\arrmu}$-framed stable pairs on $Y$ by the condition that, on the complement of $E$,
they agree with the surjection $\calO_Y \twoheadrightarrow \calO_{C_{\arrmu}}$.
Similarly, we define moduli spaces
$\calP(Y, C, \arrmu; r,n)$ and the associated generating functions
$$\sZ(Y, C, \arrmu; q, Q),\quad \sZ'(Y, C, \arrmu; q, Q).$$
Notice that these only depend on the formal neighborhood of $C$ at $p$.

On the knot side, given an oriented link $\cL$, if one labels each component $\cL_i$ with a partition $\lambda_i$,
there exists a colored variant of the HOMFLY polynomial
$$\sW(\cL, \arrlam; v,s).$$  
We will give a definition of this invariant via skein theory in Section \ref{coloredhomfly}.
After the specialization $v = s^{-N}$, these invariants correspond to Reshetikhin-Turaev knot invariants associated to the quantum group $U_q(\mathfrak{sl}(N))$.  In this picture, the partition labels indicate the representation associated to each strand.
When all partitions are $(1)$, we recover the original HOMFLY polynomial after a change of normalization; see Section \ref{coloredhomfly} for the precise relation.

The relation between stable pairs invariants and colored HOMFLY invariants of algebraic links $\cL = \cL_{C,p}$ is as follows.
Given $C$ and $\arrmu$ as above, components of $\cL$ are in bijection with branches of $C$ at $p$; we label each component $\cL_i$ with the transpose of the partition $\mu_{i}$ associated to the corresponding branch.  
The main theorem of this paper is then the following
\begin{thm}\label{mainthm2}
There exist integers $a(C, \arrmu)$, $b(C,\arrmu)$ and a sign $(-1)^{\epsilon(C,\arrmu)}$
such that
$$\sZ'(Y,C,\arrmu;q,Q) =  (-1)^{\epsilon(C,\arrmu)} v^{a(C,\arrmu)} s^{b(C,\arrmu)} \sW(\cL,\arrmu^{\tr};v,s)$$
after the change of variables
$$q= s^{2},\quad Q = -v^{2}.$$
\end{thm}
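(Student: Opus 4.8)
The plan is to prove Theorem \ref{mainthm2} by establishing that both sides obey the same recursion under a natural geometric/combinatorial operation, namely blowing up the ambient surface at the singular point $p$, and then checking that the recursion terminates at a base case that can be matched directly. On the algebro-geometric side, the curve $(C,p)$ with framing data $\arrmu$ determines, after embedded resolution, a decorated tree of infinitely near points; the generating function $\sZ'(Y,C,\arrmu;q,Q)$ should be expressed inductively in terms of the corresponding generating functions attached to the strict transform of $C$ on the blowup together with the exceptional divisor, the inductive step being a \emph{blowup identity} for stable pair invariants on the (total space of the appropriate rank-two bundle over the) blown-up surface. On the skein-theoretic side, the colored HOMFLY polynomial $\sW(\cL,\arrmu^{\tr};v,s)$ of the algebraic link $\cL = \cL_{C,p}$ satisfies a parallel cabling/satellite recursion: the link of $(C,p)$ is obtained from the link of its strict transform by a cabling operation governed by the multiplicities appearing in the blowup, and one needs a matching \emph{skein-theoretic blowup identity} expressing $\sW$ of the cable in terms of $\sW$ of the lower-complexity link together with an explicit Hopf-link-type correction factor. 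Finally, a base-case computation — for the singularity being smooth, or a union of smooth transverse branches with arbitrary partition labels — pins down the prefactor $(-1)^{\epsilon(C,\arrmu)} v^{a(C,\arrmu)} s^{b(C,\arrmu)}$ and, combined with the two matching recursions, yields the theorem after the substitution $q = s^2$, $Q = -v^2$.

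Concretely, the first step is to set up the stable-pairs wall-crossing machinery on $Y$ and its blowup in enough generality to handle the colored (partition-decorated) subschemes $C_{\arrmu}$: one works with moduli of framed stable pairs, uses the Behrend-function-free topological Euler characteristic so that motivic/wall-crossing formulas apply, and organizes the count by how the pair interacts with the exceptional curve $E$ versus the fiber $\pi^{-1}(0)$. The key technical output here is the blowup identity: if $\sigma: \widetilde{S}\to S=\CC^2$ is the blowup at $p$ with exceptional divisor $\ell$, and $\widetilde{C}$ the strict transform with induced partition data, then $\sZ'$ for $(Y,C,\arrmu)$ equals $\sZ'$ for the blown-up geometry $(\widetilde{Y},\widetilde{C}\cup m\ell,\widetilde{\arrmu})$ (where $m$ is the multiplicity of $C$ at $p$) times an explicit, purely combinatorial factor depending only on $m$ and the partitions — this factor is exactly the contribution of the new exceptional $\PP^1$, and can be computed as a vertex/edge factor in the topological vertex formalism. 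Iterating until the strict transform is smooth (and the decorating partitions are carried along via transposition bookkeeping) reduces everything to the base case.

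On the HOMFLY side, the parallel input is that the algebraic link $\cL_{C,p}$ is an iterated torus link/cable whose structure mirrors the resolution tree: passing to the link of the strict transform corresponds, up to framing twists, to removing one layer of cabling, and skein theory gives a formula for the colored invariant of an $m$-cable of a link in terms of the colored invariants of the link itself, with the "correction" being a matrix built from colored Hopf links (equivalently, an application of the ribbon/twist structure on the HOMFLY skein category of the annulus, together with the known eigenvalues of the meridian map on the idempotents indexed by partitions). Matching this with the geometric exceptional-$\PP^1$ factor is the crux: one must show that the combinatorial blowup factor on the stable-pairs side agrees, under $q=s^2$, $Q=-v^2$ and partition transposition, with the Hopf-link correction factor on the skein side, including the framing/twist contributions that account for the monomial prefactor $v^a s^b$ and the sign $(-1)^\epsilon$. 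I expect this comparison of the two blowup identities — reconciling the wall-crossing combinatorics of stable pairs on the exceptional curve with the skein-theoretic cabling formula — to be the main obstacle; the base case and the formal induction are comparatively routine once the two identities are in hand.
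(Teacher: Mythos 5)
Your proposal follows essentially the same strategy as the paper: a wall-crossing/flop-based blowup identity for the framed stable pair series, a matching skein-theoretic blowup identity built from meridian (colored Hopf link) operators and framing twists, a careful reconciliation of the monomial prefactors, and an induction via embedded resolution down to the nodal/smooth base case handled by the topological vertex. The only notable simplification in your sketch is describing the exceptional-curve contribution as a single multiplicative factor, whereas it is really a sum over partitions decorating the exceptional curve with local contributions at each intersection point with the strict transform, but this is consistent with your "matrix of colored Hopf links" phrasing and does not change the approach.
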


That is, the two quantities differ by a uniquely determined monomial shift.  The arguments in this paper can be used to express this shift in terms of the data of $C$ and $\arrmu$, but we will only write it out explicitly in the uncolored case, which gives a proof of Theorem \ref{mainthm1}.

\subsection{Strategy and outline}
The idea of the proof is as follows.
Using embedded resolution of singularities, after a sequence of blowups of $\CC^2$, the total transform of the singularity $(C,p)$ has at worst nodal singularities.
In these cases, the $Q = 0$ specialization of Theorem \ref{mainthm2} is a well-known consequence of the 
topological vertex formalism; see, for instance, 
\cite{akmv, orv} for an overview of these ideas.  In order to reduce to this case, we prove a blowup formula for the framed stable pairs generating function
associated to $C$ and an analogous formula for the colored HOMFLY polynomial of its link.

In order to construct a blowup identity for $\sZ'(Y,C, \arrmu;q,Q)$, the main tool is to prove the invariance of the generating function with 
respect to a simple flop; these flop invariance identities are standard and, in the context of Gromov-Witten theory, go back to \cite{li-ruan}.  In our case, we use wall-crossing techniques, as in the papers \cite{toda, calabrese}.  Even in the uncolored case, the blowup identity requires arbitrary partition labels on the total transform.

For the corresponding identity on the link side, the idea is to use linearity of the HOMFLY trace to reduce to the
case of the colored unknot, where it can be shown directly (or by moving back to the algebraic geometry side).  Finally, we need to prove the compatibility 
of the two identities with Theorem \ref{mainthm2}; the main issue here is keeping track of the various monomial shifts in the statement.  Once this
is established, an inductive argument reduces to the nodal case, and finishes the proof.

We give a brief outline of the paper.  In section $2$, we prove the flop invariance statement and translate it into a blowup formula for $\sZ'(\dots)$.
In sections $3$ and $4$, we recall some basic features of the skein-theoretic approach to the HOMFLY polynomial (following the work
of Morton and collaborators \cite{aiston-morton, lukac-morton, morton-manchon})
and to the links associated plane curve singularities (following the book of Eisenbud-Neumann \cite{eisenbud-neumann}).  
In section $5$, we use this to prove a blowup identity for $\sW(\dots)$.  In section $6$, we combine the two identities and check the
needed compatibilities.
In the appendix, we prove a certain Hecke character identity needed for the skein calculations in section $3$.

\subsection{Further directions}

In \cite{oblomkov-rasmussen-shende}, the authors conjecture a refined version of Theorem \ref{mainthm1}, relating the weight filtration on cohomology 
to the HOMFLY homology of $\cL_C$, as defined by Khovanov-Rozansky \cite{khovanov-rozansky}.  It is natural to ask
whether the approach here can be refined as well.

On the link side, we are not aware of any version of skein model techniques for HOMFLY homology.  
However, for torus knots, there are conjectural descriptions in terms of an operator formalism on Fock space, due to Aganagic-Shakirov \cite{mina-shamil} and Cherednik \cite{cherednik}, which may generalize to arbitrary algebraic links.  If provable, this formalism would effectively replace the techniques used here.

On the algebro-geometric side, the relation with stable pairs is conjecturally extended to motivic invariants in \cite{diaconescu-hua-soibelman}.  In addition to standard foundational
issues regarding the motivic wall-crossing machine (e.g. existence and compatibility of orientation data), they also 
require additional conjectures regarding the behavior of the motivic weight.
Under the assumption of the same conjectures, it seems plausible that the blowup identity may extend as well.  One would also 
need to understand the colored Hopf link, which seems closely related to the conjectural matching of motivic and refined invariants for toric Calabi-Yau geometries, as proposed in \cite{balazs-paper} and \cite{gukov-dimofte}.

\subsection{Acknowledgements}

We thank Brian Conrad, Sabin Cautis, Eugene Gorsky, Adam Knapp, Max Lieblich, Robert Lipshitz, Walter Neumann, Andrei Okounkov, Rahul Pandharipande, Sucharit Sarkar, Noah Snyder, and Ben Young for many enlightening discussions.  We are especially grateful to John Calabrese, Emanuel Diaconescu, Hugh Morton, and Alexei Oblomkov for a detailed discussion of their papers, and to Vivek Shende for comments on an earlier draft. The author has been partially supported by a Clay Research Fellowship and by NSF Grant DMS-1159416.

\section{Blowup identity}

In this section, we prove a blowup formula for 
$\sZ'(Y,C,\arrmu;q,Q)$ via flop invariance of stable pairs generating functions.

\subsection{Setup}\label{blowupsetup}
We will follow all notation from the introduction.

Let $Y_{-}$ be the local Calabi-Yau threefold obtained via simple flop of the $(-1,-1)$ curve $E$; 
it is again given by the total space of $\calO(-1) \oplus \calO(-1)$ on a rational curve $E_{-}$ and there
exists a birational morphism $\phi: Y \dashedrightarrow Y_{-}$ defined away from $E$ and $E_{-}$

The proper transform of $\pi^{-1}(0)$ with respect to $\phi$ is given by its blowup at the origin, with exceptional fiber $E_{-}$.  Similarly,
the proper transform of $C$ with respect to $\phi$ is the blowup $C'$ of $C$ at $p$.
It intersects $E_{-}$ at points $p_{1}, \dots, p_{e}$ corresponding to the tangent cone of $C$ at $p$.  For
$k = 1, \dots, e$, let 
$$(D_{k},p_{k})$$ 
denote the singularity of the (reducible) curve $C' \cup E_{-}$ at $p_{k}$.  These are planar singularities, and the output of this section
will be a formula for $\sZ(Y,C,\arrmu;q,Q)$ in terms of the corresponding generating functions for $D_k$.

Given a sequence of partitions $\arrmu$ labelling the components of $C$, let $C'_{\arrmu} \subset Y_{-}$ denote the pure scheme-theoretic closure of $\phi(C_{\arrmu}) \subset
Y_{-}\backslash E_{-}$.

The definition of $C'_{\arrmu}$-framed stable pairs is analogous to the case of $Y$ and we have
generating functions
$$\sZ(Y_{-}, C', \arrmu; q, Q),\quad \sZ'(Y_{-}, C', \arrmu;q, Q).$$

In what follows, let $m_{1}, \dots, m_{r}$ be the multiplicities of the branches of $C$ at $p$ and
let $\mu = \sum_{i=1}^{r} m_i \mu_i$
be the partition whose first part is $\mu^{(1)} = \sum m_i \mu_{i}^{(1)}$, etc.
We will use wall-crossing techniques to prove the following flop identity for the normalized generating functions.

\begin{prop}\label{toricflopformula}
We have the flop identity
\begin{equation}\label{flop1}
Q^{|\mu|} \sZ'(Y, C, \arrmu; q, Q^{-1}) = q^{\delta} \sZ'(Y_{-}, C', \arrmu; q, Q)
\end{equation}
where
$$
\delta = \sum_{j=1}^{l(\mu)} \binom{\mu^{(j)}}{2} - (j-1)\,.
$$
\end{prop}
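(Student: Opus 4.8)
The plan is to deduce the flop identity \eqref{flop1} from an \emph{unnormalized} comparison between $C_{\arrmu}$-framed stable pairs on $Y$ and $C'_{\arrmu}$-framed stable pairs on $Y_{-}$, using a wall-crossing argument in the style of \cite{toda, calabrese}. The birational map $\phi\colon Y\dashrightarrow Y_{-}$ is an isomorphism away from $E$ and $E_{-}$, so a $C_{\arrmu}$-framed stable pair on $Y$ and a $C'_{\arrmu}$-framed stable pair on $Y_{-}$ restrict to canonically identified objects on the common open locus $Y\setminus E\cong Y_{-}\setminus E_{-}$. Since $\phi$ is a flop (hence crepant, with a common resolution $\widetilde Y$), pushing and pulling along $\widetilde Y$ identifies the two bounded derived categories, and this equivalence is $t$-exact for a suitably tilted heart so that stable pairs on one side correspond to Bridgeland-stable objects on the other. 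The first step is therefore to set up the tilted hearts on $D^b(Y)$ and $D^b(Y_{-})$ in which $C_{\arrmu}$-framed (resp. $C'_{\arrmu}$-framed) stable pairs appear as stable objects, and to track how the Chern character / $K$-theory class transforms under the flop. Because $E\cdot E = -2$ in the surface $\pi^{-1}(0)$ under blowup, and $C$ meets $E$ with total multiplicity $|\mu|$ (weighted as in the definition of $\mu = \sum m_i\mu_i$), the curve-class component gets reflected, which is the geometric source of the substitution $Q\mapsto Q^{-1}$ together with the prefactor $Q^{|\mu|}$; the Euler-characteristic component shifts by a correction term that I expect to work out to exactly $\delta = \sum_{j}\binom{\mu^{(j)}}{2}-(j-1)$.

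Concretely, the key steps in order: (1) Identify the common resolution $\widetilde Y\to Y$, $\widetilde Y\to Y_{-}$ and the induced Fourier–Mukai equivalence $\Phi\colon D^b(Y)\simrightarrow D^b(Y_{-})$; record its action on numerical classes $(r,n)\mapsto(r,n')$ where $r$ is the multiplicity along the exceptional curve and $n$ the normalized Euler characteristic. (2) Show that $\Phi$ carries the heart containing $C_{\arrmu}$-framed stable pairs to (a shift/tilt of) the heart containing $C'_{\arrmu}$-framed stable pairs; equivalently, run a finite sequence of wall-crossings across the walls between the two chambers and check there are no strictly-semistable phenomena that contribute, so the (weighted) Euler characteristics of the moduli spaces match term-by-term up to the bookkeeping from step (1). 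This gives an unnormalized identity $\sum_{r,n}q^n Q^r\chi_{\tp}(\calP(Y,C,\arrmu;r,n)) = (\text{monomial})\cdot\sum_{r,n}q^{n'}Q^{r'}\chi_{\tp}(\calP(Y_{-},C',\arrmu;r,n))$ after $Q\mapsto Q^{-1}$. (3) Apply the same argument with $C=\emptyset$ to get the corresponding identity for the background series $\sZ_Y(q,Q)=\prod_k(1+q^kQ)^k$; note $\sZ_{Y_{-}}(q,Q)=\sZ_Y(q,Q)$ by symmetry of the resolved conifold, and divide to pass to the normalized series $\sZ'$, which absorbs the $k$-dependent background shifts and isolates the clean monomial $q^{\delta}$. (4) Pin down $\delta$ by a direct computation of the Euler-characteristic shift contributed by the framing scheme $C_{\arrmu}$ versus $C'_{\arrmu}$ at the origin: the subscheme defined by $z^{j-1}f_i^{\mu_i^{(j)}}$ contributes, after blowup, a length change governed by $\binom{\mu^{(j)}}{2}-(j-1)$ summed over parts, which is exactly $\chi(\calO_{C'_{\arrmu}})-\chi(\calO_{C_{\arrmu}})$ suitably interpreted.

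The main obstacle, and the part requiring real care, is step (2): establishing that the flop-induced derived equivalence is exact for the relevant tilted hearts \emph{with the framing along $C$ (resp. $C'$) present}, and that no semistable locus spoils the motivic (Euler-characteristic) matching. The framing condition "$(\calF,\sigma)$ agrees with $\calO_Y\twoheadrightarrow\calO_{C_{\arrmu}}$ off $E$" must be shown to be preserved under $\Phi$ — i.e., the extended/restricted sheaves genuinely land in the moduli problem for $C'_{\arrmu}$ — and the wall-crossing chamber structure must be analyzed relative to this framing, which is where the hypothesis that each component of $C$ is a single analytic branch will be used to keep the combinatorics manageable. The remaining difficulty is purely bookkeeping: correctly matching the shift in $n$ under $\Phi$ with the $\delta$ computed in step (4) and with the $Q^{|\mu|}$ prefactor, so that the final statement \eqref{flop1} comes out with the exponents as written; I expect this to reduce to the identity $\chi(\overline{\calF})$ transforms by exactly $-\delta$ plus the multiplicity-dependent terms that cancel against the background $\sZ_Y$.
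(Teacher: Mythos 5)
Your overall strategy---flop invariance via the Bridgeland/Toda/Calabrese wall-crossing formalism with the framing carried along, the curve-class transformation $\phi_*[C_{\arrmu}] = [C'_{\arrmu}] + |\mu|[E_{-}]$, $\phi_*[E_{+}] = -[E_{-}]$ accounting for the prefactor $Q^{|\mu|}$ and the substitution $Q \mapsto Q^{-1}$, and $\delta$ arising as $\chi(\calO_{\overline{C'_{\arrmu}}}) - \chi(\calO_{\overline{C_{\arrmu}}})$---is the same as the paper's. But your step (2) is false as stated, and this is not mere bookkeeping. The derived equivalence $\Phi$ does \emph{not} carry stable pairs to stable pairs, and the framed moduli spaces on the two sides do not match term-by-term up to reindexing; there is no chamber path along which ``no strictly-semistable phenomena contribute.'' Already for $C = \emptyset$ your claimed unnormalized identity cannot hold: $\sZ_Y(q,Q) = \prod_k (1+q^kQ)^k$ is supported in non-negative powers of $Q$, so it is not a monomial times $\sZ_Y(q,Q^{-1})$, which has unboundedly negative $Q$-powers. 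What $\Phi$ actually identifies is the \emph{perverse} Hilbert schemes for $p=-1$ on one side and $p=0$ on the other; comparing each of these with the ordinary framed Hilbert scheme is a genuine wall-crossing, carried out via the Hall algebra identity $\Hilb^p \ast 1_{\calF^{p}[1]} = 1^{\calO}_{\calF^{p}[1]} \ast \Hilb$, and it produces the non-monomial correction $\sZ_Y(q,Q^{-[E_{\pm}]})$ from objects supported on the exceptional curve. A further DT/PT wall-crossing in the style of Stoppa--Thomas, contributing $M(q)^{\chi_{\tp}(E_{\pm})}$, is then needed to pass from framed subschemes to framed stable pairs. Only after assembling all of these and dividing by the background does everything cancel down to $q^{\delta}$; in particular your step (3) cannot repair step (2), since the $C=\emptyset$ instance of the identity claimed in step (2) is itself false.

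Two further points. First, the wall-crossing machinery you invoke is set up for compact Calabi--Yau threefolds (the integration map on the Hall algebra and the no-pole arguments use the CY3 condition); since $Y$ admits no Calabi--Yau compactification, the paper first embeds the formal neighborhood of $\pi^{-1}(0)\cup E$ into a compact Calabi--Yau $X_{\pm}$ and proves the identity there. This is also where $q^{\delta}$ actually appears: as the discrepancy between the discrete-invariant conventions $n = \chi(\calF)$ on $X_{\pm}$ and $n = \chi(\overline{\calF}) - \chi(\calO_{\overline{C}})$ on $Y$, via $\chi(\calO_{\overline{C'_{\arrmu}}}) = \chi(\calO_{\overline{C_{\arrmu}}}) + \delta$. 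Second, preservation of the framing must be verified not just for the derived equivalence but at each geometrically bijective morphism of stacks in the proof of the Hall identity (restricting every intermediate stack to the closed substack where the section is the given surjection off $E$); this is routine but it is where the actual adaptation of the unframed argument takes place.
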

The precise value of $\delta$ will not matter for our arguments.

\subsection{Compact geometry}

In order to apply the wall-crossing results without modification, it is convenient to work
with compact Calabi-Yau geometries.
While there is no Calabi-Yau compactification of $Y$, there do exist compact Calabi-Yau threefolds $X$
such that the formal (or analytic, even) neighborhood of $\pi^{-1}(0) \cup E$ embeds into $X$.  

Following \cite{diaconescu-hua-soibelman}, we fix one such geometry as follows.
Let $S$ be the blowup of $\PP^2$ at a point, and let
$$\rho: X_{-} \rightarrow S$$ be a smooth Calabi-Yau threefold that is a Weierstrass elliptic fibration over $S$.  
See, for instance, \cite{vafa-morrison} or \cite{fullwood} for constructions of such families.
Let $S_{-}$ denote the identity section of $\rho$, and let $E_{-} \subset S_{-}$ be the unique $(-1)$-curve, which gives a 
$(-1,-1)$-curve in the total space $X_{-}$.

\begin{lem}
The formal completion $\widehat{S_{-}}$ of $X_-$ along $S_{-}$ is isomorphic to the completion of the normal bundle of $S_-$ along the zero section.
\end{lem}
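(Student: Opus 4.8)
The plan is to use the explicit Weierstrass presentation of $X_-$ to produce a single global fiber coordinate transverse to $S_-$, and then eliminate the remaining coordinate via the formal implicit function theorem; this identifies the formal completion $\widehat{S_-}$ with the formal completion of $\mathrm{Tot}(N_{S_-/X_-})$ along its zero section. Throughout I identify $S_-$ with $S$ via $\rho$ and write $\mathcal N = N_{S_-/X_-}$; note that by adjunction together with $K_{X_-} \cong \calO$ one has $\mathcal N \cong K_S$, so the conormal bundle of $S_-$ is $-K_S$, the fundamental line bundle of the Weierstrass fibration.

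First, I recall that a smooth Weierstrass elliptic threefold is cut out, inside a $\PP^2$-bundle $\PP_S(\calO_S \oplus \cdots)$ over $S$, by a Weierstrass equation of the form $Y^2 Z = X^3 + f X Z^2 + g Z^3$, with the identity section $S_-$ equal to the locus $[X:Y:Z] = [0:1:0]$. In the natural affine chart $\{Y \neq 0\}$ this becomes $w = u^3 + f u w^2 + g w^3$, where $u = X/Y$ and $w = Z/Y$; thus $S_- = \{u = w = 0\}$, and the derivative of the defining function $w - u^3 - f u w^2 - g w^3$ in the $w$-direction equals the unit $1 - 2fuw - 3gw^2$. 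In particular $X_-$ is smooth along $S_-$, so no resolution is needed near $S_-$, the coordinate $u$ restricts to a fiber coordinate transverse to $S_-$ valued in $\rho^{*}\mathcal N$, and $w$ is a section of $\rho^{*}\mathcal N^{\otimes 3}$.

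Second, applying the formal implicit function theorem over the completion of $S$ — legitimate since the $w$-derivative above is a unit — I solve $w = \psi(u)$ for a power series with $\psi(u) = u^3 + (\text{higher order})$. The projection $(u, w) \mapsto u$ then carries $\widehat{S_-}$ isomorphically onto a formal neighborhood of the zero section in $\mathrm{Tot}(\mathcal N)$: concretely, writing $r \colon \widehat{S_-} \to S_-$ for the retraction induced by $\rho$, the pair $(r, u)$ defines a morphism $\widehat{S_-} \to \mathrm{Tot}(\mathcal N)$ carrying $u$ to the tautological fiber coordinate. Since $S_-$ is a Cartier divisor in $X_-$, both $\widehat{S_-}$ and the completed total space are $\mathcal I_{S_-}$-adically complete with associated graded $\mathrm{Sym}_{\calO_{S_-}}(\mathcal N^\vee)$, and $(r, u)$ induces an isomorphism on associated gradeds; hence it is an isomorphism, which is the assertion of the lemma.

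I expect the only real difficulty to be bookkeeping with line-bundle twists: one must check that the implicit-function solution $\psi$, constructed a priori Zariski-locally on a cover of $S$ trivializing $\mathcal N$, glues to a global datum compatible with the grading of the Weierstrass equation — equivalently, that $(r, u)$ genuinely lands in $\mathrm{Tot}(\mathcal N)$ rather than in some untwisted formal disc bundle. This follows from uniqueness of the formal implicit-function solution, so that the local solutions agree on overlaps, together with the evident homogeneity of the Weierstrass equation. If one prefers to bypass the explicit coordinates, the same conclusion follows from deformation theory: given the retraction $r$, the obstructions to splitting the $\mathcal I_{S_-}$-adic filtration lie in $H^{1}(S, (\pm K_S)^{\otimes j})$ for $j \geq 1$, and these all vanish by Kodaira vanishing together with Serre duality, since $S$ — the blowup of $\PP^2$ at a point, i.e.\ the Hirzebruch surface $\mathbb F_1$ — is a del Pezzo surface.
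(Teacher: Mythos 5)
Your argument is correct, but it takes the route the paper explicitly declines to write out. The paper's own proof is a one-line structural argument: $\widehat{S_-}$ is a one-dimensional commutative formal group scheme over $S_-$ (the fiberwise group law of the elliptic fibration, restricted to the formal neighborhood of the identity section), and in characteristic zero the formal logarithm identifies any such formal group with the additive formal group on its Lie algebra, i.e.\ with the completed normal bundle. Your proof is the "direct calculation with the Weierstrass equation" that the paper mentions as an alternative: you pass to the affine chart $\{Y\neq 0\}$, use that $\partial_w$ of the defining equation is a unit along the section to eliminate $w=\psi(u)$, and then check that $(r,u)$ gives a map to $\mathrm{Tot}(\mathcal{N})$ inducing an isomorphism on associated gradeds. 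This is sound, including the twist bookkeeping ($u$ is a section of $L^{-1}\cong \mathcal{N}\cong K_S$ and the Weierstrass equation is homogeneous of weight $L^{-3}$, so the local implicit-function solutions glue by uniqueness). What the logarithm buys is the complete avoidance of coordinates and gluing; what your computation buys is elementarity — it needs nothing about formal groups, only smoothness of $X_-$ along the section. One small caution on your closing deformation-theoretic variant: with the retraction fixed, the successive obstructions to finding a global equation for $S_-$ inducing the identity on $\mathcal{N}^\vee$ lie in $H^1(S,\mathcal{N}\otimes(\mathcal{N}^\vee)^{\otimes(j+1)})=H^1(S,(-K_S)^{\otimes j})$, which indeed vanish by Kodaira on $\mathbb{F}_1$; but if you were also constructing the retraction by obstruction theory you would additionally meet groups of the form $H^1(S,T_S\otimes(-K_S)^{\otimes j})$, whose vanishing is not literally Kodaira (it follows, e.g., from toric Bott--Danilov--Steenbrink vanishing for $\Omega^1_S\otimes(-K_S)^{\otimes(j+1)}$). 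Since you take $r$ to be induced by $\rho$, this does not affect your argument.
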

\begin{proof}
This can be proven via direct calculation with the Weierstrass equation, but  one quick argument is to use that $\widehat{S_{-}}$ is a one-dimensional commutative formal group scheme over $S_{-}$ so the logarithm gives the desired isomorphism with the formal completion inside its Lie algebra, i.e. the normal bundle with the additive group structure.  
\end{proof}

Let $X_+$ with exceptional curve $E_{+}$ be the simple flop of $X_-$ with respect to the curve $E_-$, and let $S_+$ denote the proper transform of $S_{-}$.  It intersects $E_+$ transversely at a single point $p$.  Using the above formal isomorphism, we see that the formal completion along $S_{+}\cup E_{+}$ contains the formal completion of $Y_{+}$ along  $\pi^{-1}(0) \cup E$ as an open formal subscheme.

Given the subscheme $C_{\arrmu} \subset Y$, we use this formal identification and take the scheme-theoretic closure to obtain a subscheme $\overline{C_{\arrmu}} \subset X_+$.  We can do the same thing to define $\overline{C'_{\arrmu}} \subset X_{-}$.
As in the introduction, we define $\overline{C_{\arrmu}}$-framed stable pairs on $X$
and their moduli space $\calP(X_{+},\overline{C}, \arrmu,r,n)$ where 
we now choose the convention $n = \chi(\calF)$ for discrete invariants.
If we define
$$\sZ(X_{+}, \overline{C}, \arrmu; q, Q) = \sum_{r,n} Q^{r}q^{n} \chi_{\tp}(\calP(X_+, \overline{C}, \arrmu, r, n)),$$
and the corresponding normalized generating function $\sZ'(\dots)$,
then
the formal isomorphism (and our change of discrete invariant convention) gives the equality
$$\sZ'(X_{+},\overline{C}, \arrmu; q, Q) = q^{\chi(\calO_{\overline{C}_{\arrmu}})} \sZ'(Y, C, \arrmu; q, Q)$$
and
$$\sZ'(X_{-}, \overline{C'}, \arrmu; q, Q) = q^{\chi(\calO_{\overline{C'}_{\arrmu}})} \sZ'(Y_{-}, C', \arrmu; q, Q).$$

An Euler characteristic calculation shows that
$$\chi(\calO_{\overline{C'_{\arrmu}}}) = \chi((\calO_{\overline{C_{\arrmu}}}) + \delta\,.$$
Therefore, in order to prove Proposition \ref{toricflopformula},
it suffices to show
\begin{prop}\label{compactflop}
$$Q^{|\mu|}\sZ'(X_{+},\overline{C}, \arrmu; q, Q^{-1}) = \sZ'(X_{-}, \overline{C'}, \arrmu; q, Q)\,.$$
\end{prop}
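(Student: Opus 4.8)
The plan is to deduce the identity directly from flop invariance of stable pair invariants, applied to the flop $\phi : X_+ \dashrightarrow X_-$ along the $(-1,-1)$-curve, carefully keeping track of how the framing data transforms. The first step is to establish that the moduli problems on the two sides are naturally identified away from the exceptional curves: since $\phi$ is an isomorphism on $X_+ \setminus E_+ \cong X_- \setminus E_-$, and the framing subschemes satisfy $\phi(\overline{C_{\arrmu}} \setminus E_+)$ having scheme-theoretic closure $\overline{C'_{\arrmu}}$ by construction, a $\overline{C_{\arrmu}}$-framed stable pair on $X_+$ restricts to a pair on $X_+ \setminus E_+$ that is carried over to $X_- \setminus E_-$; one then takes the stable-pair extension across $E_-$. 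The key input is that this extension exists and is unique, and that the construction is invertible, giving an isomorphism of moduli stacks that is compatible with torus actions and hence preserves topological Euler characteristics stratum by stratum.

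The second step is to compute how the discrete invariants $(r,n) = (\beta \cdot [\text{pt class}], \chi(\calF))$ behave under this correspondence. Here is where the variable change $Q \mapsto Q^{-1}$ and the prefactor $Q^{|\mu|}$ enter: the curve class component along $E_+$ (measured by $Q$) is sent to $-$(class along $E_-$) under the flop — this is the standard sign flip of the exceptional class — while the multiplicity along the framing curve contributes the shift by $|\mu|$, since $\mu = \sum m_i \mu_i$ records exactly the generic multiplicity along $E$ of the proper transform data (the branches of $C$ of multiplicities $m_i$ meeting $E_-$, each weighted by its partition). The holomorphic Euler characteristic $n$ is flop-invariant in the normalized generating function — the potentially shifting contributions from $E_\pm$ and from $\calO_{\overline{C}}$ versus $\calO_{\overline{C'}}$ are precisely what the normalization by $\sZ_{X_\pm}$ and the already-recorded factor $q^\delta$ (split off in the reduction to this proposition) absorb, leaving the clean statement with no $q$-power.

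The third step invokes the actual wall-crossing theorem. Following Toda and Calabrese (\cite{toda}, \cite{calabrese}), flop invariance of PT invariants on compact Calabi-Yau threefolds is proven by identifying both $\mathrm{Pair}(X_+)$ and $\mathrm{Pair}(X_-)$ as limits, in a space of Bridgeland stability conditions on $D^b(X_\pm)$ identified via the derived equivalence $\Phi : D^b(X_+) \simrightarrow D^b(X_-)$, of a common chamber structure; the generating functions agree up to the explicit transformation of the Chern-character lattice induced by $\Phi$, which on the $(E, \text{pt})$ part is exactly $Q \mapsto Q^{-1}$. One has to check that the framing — the section $\sigma$ and the open condition near $E$ — is respected by $\Phi$ and the wall-crossing; since the framing is imposed on the complement of $E_\pm$, where $\Phi$ is the identity, this is compatible, but it does require noting that $\Phi$ applied to a $\overline{C_{\arrmu}}$-framed object is (after the extension) a $\overline{C'_{\arrmu}}$-framed object and that this is the same identification as in step one.

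The main obstacle I expect is the bookkeeping in step two — pinning down exactly which monomial shifts are flop-invariant, which get absorbed into $\sZ_{X_\pm}$, and which produce the $Q^{|\mu|}$ factor and the separately-recorded $q^\delta$ — rather than any conceptual difficulty, since the underlying flop invariance is, as the paper says, "standard." A secondary technical point is verifying that the framed moduli spaces are genuinely locally closed in the (compactly supported) PT moduli space and that their Euler characteristics match under the derived equivalence even though the equivalence does not respect the framing condition as a closed condition on the whole threefold; this is handled by the fact that the framing lives over the flop locus's complement, but it should be stated carefully.
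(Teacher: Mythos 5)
Your step two (the curve-class bookkeeping $\phi_*[E_+]=-[E_-]$, $\phi_*[\overline{C_{\arrmu}}]=[\overline{C'_{\arrmu}}]+|\mu|[E_-]$, and the absorption of $q^{\delta}$ into the reduction to this proposition) is correct and matches the paper. But step one contains a claim that is false and that, if true, would make the entire proposition trivial in a way it is not: you assert that a $\overline{C_{\arrmu}}$-framed stable pair on $X_+$, restricted to the complement of $E_+$ and carried over to $X_-\setminus E_-$, admits a \emph{unique} extension to a stable pair on $X_-$, yielding an isomorphism of moduli stacks. There is no such unique extension: a pair on $X_-\setminus E_-$ extends to stable pairs on $X_-$ with arbitrary additional multiplicity along $E_-$ and arbitrary cokernel supported there — this is exactly the freedom recorded by the variable $Q$ and the reason the identity only holds for the \emph{normalized} series $\sZ'$ after the substitution $Q\mapsto Q^{-1}$ (note that termwise equality of Euler characteristics is impossible, since negative multiplicities along $E_-$ give empty moduli spaces). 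Your step three then contradicts step one: if the moduli stacks were already isomorphic, no wall-crossing would be needed.

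What the flop actually identifies is not the stable pair moduli but the \emph{perverse} Hilbert schemes: the derived equivalence $\Phi$ carries the heart $\calA^{-1}_+$ to $\calA^0_-$, giving $\DT^{p=0}(X_-)=\phi_*\bigl(\DT^{p=-1}(X_+)\bigr)$ essentially for free. The substance of the proof is then in two separate wall-crossings that you do not supply: (i) Calabrese's Hall-algebra identity $\Hilb^p\ast 1_{\calF^p[1]}=1^{\calO}_{\calF^p[1]}\ast\Hilb$ relating perverse to ordinary Hilbert schemes, which after the integration map produces the factors $\sZ_Y(q,Q^{-[E_\pm]})$ that become the normalization; and (ii) the Stoppa--Thomas DT/PT correspondence along $E_\pm$, giving $\DT = \PT\cdot M(q)^{\chi_\tp(E_\pm)}$, to pass from framed subschemes to framed stable pairs. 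Each of these must be checked to be compatible with the $C_{\arrmu}$-framing condition: for (i) by restricting the chain of geometrically bijective morphisms of stacks to the closed substacks of framed objects, and for (ii) by integrating the punctual DT/PT identity over the constructible locus of Cohen--Macaulay curves agreeing with $C_{\arrmu}$ off $E$. Your closing remark that "the framing lives over the complement of the flop locus" is the right intuition for why these checks succeed, but as written the proposal replaces the two wall-crossing arguments with a nonexistent isomorphism of moduli spaces, so the gap is essential.
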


\subsection{Flop invariance via wall-crossing}

If we work with the moduli space of stable pairs, without any framing condition, flop invariance of the associated generating
function is shown via wall-crossing techniques in \cite{toda} and \cite{calabrese}; see \cite{hu-li} for a global argument for virtual invariants.  The
wall-crossing approach has the advantage that it applies equally well
in the virtual or topological setting and behaves well with respect to stratification.  As a result, the same arguments apply here
with only minor modification.  We recall the argument of \cite{calabrese} and explain the changes needed to include the framing condition.

The key idea is to study perverse coherent sheaves in the sense of \cite{bridgelandflop}.  
For $p= -1,0$, these are defined by certain perverse $t$-structures on $X_{\pm}$ with hearts $\calA^{p}_{\pm}$,
obtained by tilting with respect to suitable torsion pairs $(\calT^p_{\pm}, \calF^p_{\pm})$.
It follows from definitions that all objects in $\calF^p_{\pm}$ are supported on the exceptional curves $E_{\pm}$ and, in particular,
 objects in $\calA^p_{\pm}$ are simply sheaves when restricted to the complement.  There exists
a derived equivalence $\Phi: D^b(X_+) \rightarrow D^b(X_-)$ that induces an equivalence $\calA^{-1}_{+} \rightarrow \calA^{0}_{-}$.
We refer the reader to \cite{calabrese} and \cite{bridgelandflop} for more details and definitions.

In particular, given $\beta \in H_{2}(X_{\pm})$ and $n \in \ZZ$, we can study the perverse Hilbert scheme
$\Hilb^p(X)_{\beta,n}$, parametrizing quotients of $\calO_X$ in
$\calA^p_{\pm}$ with $\mathrm{ch}_2 = \beta$, Euler characteristic $n$, and at-most one-dimensional support.
We have the associated generating function
$$\DT^p(X_{\pm}; Q,q) = \sum_{\beta, n} \chi_{\tp}(\Hilb^p(X)_{\beta,n}) Q^{\beta}q^{n} \in \Lambda^p_{\pm}.$$
Here $\Lambda^p_{\pm}$ is a certain completion of the ring 
$$\left\{\sum c_{\beta,n} Q^{\beta} q^{n} | (\beta,n) = (\mathrm{ch}_2(E),\chi(E)), E \in \calA^p_{\leq 1,\pm}\right\}.$$
Let
$\phi_*: H_{2}(X_+) \rightarrow H_{2}(X_-)$ be the 
isomorphism induced by the flop $\phi$; it induces a map $\Lambda^{-1}_{+} \rightarrow \Lambda^0_{-}$.
We have an equality
\begin{equation}\label{perverseflop}
\DT^{p=0}(X_{-}; Q, q) = \phi_{*}\left(\DT^{p=-1}(X_+;Q,q)\right)\,.
\end{equation}

The desired flop equality comes from relating each side of \eqref{perverseflop} to the Hilbert scheme for the standard
$t$-structure.  Let $\sH(\calA^p_{\leq 1})_{\Lambda}$ denote the motivic Hall algebra  of the moduli stack of 
one-dimensional perverse coherent sheaves (or, rather, an appropriate completion of it).  Calabrese proves an identity
\begin{equation}\label{hallidentity}
\Hilb^p \ast 1_{\calF^{p}[1]} = 1^{\calO}_{\calF^{p}[1]}\ast \Hilb\,,
\end{equation}
Here, $\Hilb^p$ and $\Hilb$ are elements of the Hall algebra arising from the corresponding Hilbert schemes,
and
$1_{\calF^{p}[1]}$ and $1^{\calO}_{\calF^{p}[1]}$
are the Hall algebra elements associated to the stack of complexes in $\calF^{p}[1]$, equipped with a section 
in the latter case.  One can process this identity further by writing $1^{\calO}_{\calF^{p}[1]}$
in terms of $1_{\calF^{p}[1]}$ and stable pairs supported on $E$.

There is an integration map
$$\calI:  \sH_{\mathrm{reg}}(\calA^p_{\leq 1,\pm})_{\Lambda} \rightarrow \Lambda^p_{\pm}$$
defined on the subalgebra generated by schemes over the moduli stack by taking their 
topological Euler characteristic (as opposed to any Behrend weight).  
Although $1_{\calF^{p}[1]}$ is not in this subalgebra,  conjugation by
it preserves this subalgebra and one can show that this conjugation operator becomes trivial after applying $\calI$. 
The upshot of this analysis is
that 
$$\DT^{p}(X_{\pm};Q,q) = \sZ_Y(q,Q^{-[E_{\pm}]})\cdot \DT(X_{\pm}; Q,q).$$
Recall that
$$\sZ_Y(q,Q) = \prod_{k} (1+q^{k}Q)^{k}$$
is the topological PT series of the resolved conifold
and $\DT(X;Q,q)$ is the topological DT series of $X$, i.e. the generating function of topological Euler characteristics
of the usual Hilbert scheme of curves. 
Notice that this formula is independent of $p = -1,0$ and is stated for both $X_+$ and $X_-$.
 In combination with
equation \eqref{perverseflop}, this yields
$$\frac{1}{\sZ_Y(q, Q^{[E_{-}]})} \DT(X_{-};Q,q) = 
\phi_{*}\left(\frac{1}{\sZ_Y(q, Q^{[E_{+}]})}\DT(X_{+};Q,q)\right).$$

\subsection{Proof of Proposition \ref{compactflop}}

It suffices to modify the above argument to handle framed stable pairs.  
Given an element of a perverse or standard Hilbert scheme, we say it is $C_{\arrmu}$-framed if its restriction to $X\backslash E$ is the restriction of the subscheme $\calO_{C_{\arrmu}}$.
As before, we consider the subspace of framed objects and the corresponding generating functions of topological Euler characteristics.
First, since the flopped identification of perverse Hilbert schemes is compatible with the framing, we immediately have a framed analog of equation \eqref{perverseflop}:
$$\DT^{p=0}(X_{-}, C', \arrmu; q, Q) = \phi_{*}\left(\DT^{p=-1}(X_+, C, \arrmu;q,Q)\right)\,. $$

Second, we require the framed version of the Hall identity \eqref{hallidentity}.
The original statement is proven through a sequence of intermediate stacks, each step is either a geometrically bijective morphism or comparison of Zariski-locally-trivial fibrations with isomorphic fibers.  For example, the first step is as follows.

Let
$\mathcal{M}_L$ denote the stack of diagrams in $\calA^p$
\begin{equation*}
\xymatrix{
& \mathcal{O}_X \ar[d]_{\phi} &  & &
\\
0 \ar[r] & P_1 \ar[r] & P \ar[r] & P_2 \ar[r] & 0
\\
}
\end{equation*}
with the vertical map surjective in $\calA^p$ and $P_2 \in \calF^p[1]$
and
$\mathcal{M}'$ denote the stack of diagrams
$$\psi:\calO_X \rightarrow P$$
with $\mathrm{Coker}(\psi) \in \calF^{p}[1]$.
There is a geometrically bijective morphism that sends the first diagram to $\calO_X \rightarrow P$.
We replace both stacks by the closed substacks where
$\phi$ and $\psi$ are isomorphic to the surjection 
$$\calO \twoheadrightarrow \calO_{C_{\arrmu}}$$
when restricted to the open set $X \backslash E$.
It is clear that the above bijective morphism restricts to a geometrically bijective morphism between these closed substacks.
A similar check works for the other steps.

The result is the Hall identity for framed Hilbert schemes:
$$\Hilb^p_{\mathrm{framed}} \ast 1_{\calF^{p}[1]} = 1^{\calO}_{\calF^{p}[1]}\ast \Hilb_{\mathrm{framed}}\,,$$
Once this identity is in place, the remaining analysis is identical to \cite{calabrese} and yields the relation
\begin{equation}\label{dtframedflop}
\frac{1}{\sZ_Y(q, Q^{[E_{-}]})} \DT(X_{-}, C', \arrmu;q,Q) = 
\phi_{*}\left(\frac{1}{\sZ_Y(q, Q^{[E_{+}]})}\DT(X_{+}, C, \arrmu;q,Q)\right)\,.
\end{equation}

Finally, we need to pass from framed subschemes of $X_{\pm}$ to framed stable pairs on $X_{\pm}$.  Here, we have the relation
\begin{lem}
$$\DT(X_{+}, C, \arrmu;q,Q) = \sZ(X_{+}, C, \arrmu; Q, q) \cdot M(q)^{\chi_{\tp}(E_{+})}$$
where
$$M(q) = \prod_{k \geq 1} \frac{1}{(1-q^{k})^{k}}$$
is the Macmahon function.
\end{lem}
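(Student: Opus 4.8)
The plan is to relate the DT side (framed Hilbert scheme of curves on $X_+$) to the PT side (framed stable pairs on $X_+$) by the standard ``DT/PT vertex'' bookkeeping, which says that a framed subscheme $Z \subset X_+$ differs from a framed stable pair by adding zero-dimensional ``garbage'' supported at finitely many points. More precisely, the curve $\overline{C_{\arrmu}} \cup E_+$ in $X_+$ is pure one-dimensional, with a finite singular locus; away from $E_+$ the framing condition pins everything down, so the only freedom in a $C_{\arrmu}$-framed subscheme (resp. stable pair) is the behavior along $E_+ \cong \PP^1$ together with zero-dimensional thickening at points of $E_+$. First I would stratify both moduli spaces by the underlying framed stable pair (equivalently, the support curve and the PT data along $E_+$), and observe that the fiber of the map from the framed Hilbert scheme to the framed stable pair space, over a point whose stable pair has zero-dimensional cokernel of total length supported at a set of points of $E_+$, is controlled by the punctual Hilbert scheme of the surrounding smooth threefold at those points.

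The key computational input is the local statement: if one fixes a smooth point of $E_+$ on the threefold and a local model of the curve through it (a smooth curve, since generically along $E_+$ the scheme $\overline{C_{\arrmu}}\cup E_+$ is just $E_+$ with its reduced structure once we are away from $p$ and away from any embedded structure), then the generating function for adding length-$m$ zero-dimensional subschemes at that point, weighted by $q^m$ and computing topological Euler characteristic, is the MacMahon function $M(q)$ — this is the classical computation of the topological Euler characteristic of $\mathrm{Hilb}^m(\CC^3)$, namely $\sum_m \chi(\mathrm{Hilb}^m(\CC^3))q^m = M(q)$. Since $E_+$ is a $\PP^1$ with Euler characteristic $\chi_{\tp}(E_+) = 2$, the contribution from all of $E_+$ multiplies as $M(q)^{\chi_{\tp}(E_+)}$ by a motivic (Euler-characteristic) power-structure / exponentiation argument: points can collide and the correct way to package ``configurations of points on $E_+$ each carrying a $\mathrm{Hilb}(\CC^3)$-contribution'' is via the power structure on the Grothendieck ring, which for the Euler characteristic specialization turns $\sum_m \chi(\mathrm{Hilb}^m(\CC^3))q^m$ into its $\chi_{\tp}(E_+)$-th power. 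Concretely I would either invoke the Behrend--Fantechi / Cheah-style stratification of the Hilbert scheme of a smooth threefold along a curve, or simply cite the analogous DT/PT Euler-characteristic identity of Pandharipande--Thomas / Toda and note that the framing only restricts the ``curve part'' and is transparent to the zero-dimensional part.

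There is one subtlety to address carefully: one must check that adding zero-dimensional subschemes at the singular point $p$ of $\overline{C_{\arrmu}}$ (where $E_+$ meets $S_+$ and the curve $C$ lives) also contributes exactly the same local MacMahon factor and that it is not double-counted. This works because the DT/PT correspondence is local on the target: the discrepancy between the framed Hilbert scheme and framed stable pairs at \emph{any} point of the support curve — singular or not — is governed by the punctual Hilbert scheme of the ambient smooth threefold $X_+$, and the generating function is $M(q)$ at every point regardless of the local structure of the curve (the curve only affects the stable-pair ``curve counting'' part, already recorded by $\sZ(X_+,C,\arrmu;Q,q)$), so integrating over all of $E_+$ — which is where all the non-framing-determined behavior is concentrated — gives the single factor $M(q)^{\chi_{\tp}(E_+)}$, with no contribution from $X_+ \setminus E_+$ since the framing fixes the pair there completely.

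The main obstacle I expect is making the ``integrate the local MacMahon factor over $E_+$'' step rigorous at the level of topological Euler characteristics when points collide: one cannot naively take $M(q)^{\chi_{\tp}}$ pointwise, and instead must use the compatibility of the Euler-characteristic-weighted count with the power structure on $K_0(\mathrm{Var})$ (equivalently, a motivic exponential / log argument as in the Hall-algebra formalism already invoked in the excerpt). In practice I would run exactly the same Hall-algebra manipulation used above for the unframed DT/PT comparison — the identity $\Hilb^p_{\mathrm{framed}} \ast 1_{\calF^p[1]} = 1^{\calO}_{\calF^p[1]} \ast \Hilb_{\mathrm{framed}}$ already isolates the exceptional-curve contribution — but now track additionally the zero-dimensional sheaves supported along the whole of the (framed) support, whose integration map output is $M(q)^{\chi_{\tp}}$ of the support of the one-dimensional part; since the framing forces that support to be $\overline{C_{\arrmu}}\cup E_+$ with $\overline{C_{\arrmu}}$ fixed, the only ``live'' Euler characteristic is that of $E_+$, giving the claimed formula.
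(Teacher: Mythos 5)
Your proposal is correct and follows essentially the same route as the paper: both reduce the statement to the Stoppa--Thomas DT/PT wall-crossing taken with respect to the category of zero-dimensional sheaves set-theoretically supported on $E_{+}$ (the framing confining all zero-dimensional discrepancy to $E_{+}$), yielding a universal multiplicative factor independent of the underlying curve. The only real difference is how that factor is evaluated: you propose a pointwise $\chi(\mathrm{Hilb}(\CC^3))=M(q)$ computation assembled by a power-structure argument (and correctly flag the point-collision issue as the delicate step), whereas the paper sidesteps this entirely by noting the factor $F_{E}(q)$ is independent of the curve $B$ and computing it at $B=\emptyset$, where it is the generating function of the Hilbert scheme of points of $X_{+}$ supported on $E_{+}$, namely $M(q)^{\chi_{\tp}(E_{+})}$ by Cheah's formula.
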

The analogous relation holds for $X_{-}$.
\begin{proof}
This is basically proven in \cite{stoppa-thomas}, which gives a punctual statement at the end of section $4.10$.
For any threefold $X$, fix a Cohen-Macaulay curve $B \subset X$ and a point $p \in X$, not necessarily lying on $B$,
and consider subschemes (or stable pairs) with underlying curve $B$ for which
the maximal zero-dimensional subsheaf  (or the cokernel in the stable pairs case) is set-theoretically supported at $p$. In other words, these are 
subschemes (and stable pairs) that are $B$-framed on $X -  \{p\}$.
The authors show the wall-crossing relation
$$\DT(X, B,p)(q) = \PT(X, B, p)(q)\cdot M(q),$$
where $\DT(X,B,p)(q)$ is the generating function of Euler characteristics of the stratum of subschemes of $X$, $B$-framed on $X-\{p\}$ (and similarly
for $\PT$).

If we fix $B$ but allow zero-dimensional behavior along $E$, then the proof of \cite{stoppa-thomas} applies with only the change that we take the category $\calT(E)$ of 
zero-dimensional sheaves set-theoretically supported on $E$, instead of the punctual category $\calT(p)$.  The result is the formula
 $$\DT(X,B, E) = \PT(X,B,E) \cdot F_E(q).$$ 
Here, the factor $F_E(q)$ depends on $E$ but is independent of $B$.  If we set $B = \emptyset$, $\DT(X,B,E)$ encodes Euler characteristics of the Hilbert scheme of points on $X$ supported on $E$, so we have the evaluation
$$F_E(q) = M(q)^{\chi_{\tp}(E)}.$$

Finally, section $4.2$ of \cite{stoppa-thomas} explains how to integrate this identity over any constructible locus of Cohen-Macaulay curves $B \subset X$; in our case we take Cohen-Macaulay curves which agree with $C_{\arrmu}$ on the complement of $E$, which gives the result.
\end{proof}

Since $\chi_\tp(E_{+}) = \chi_\tp(E_{-})$, this lemma implies the PT version of equation \eqref{dtframedflop}.

In order to complete the proof of Proposition \ref{compactflop}, we need to understand the action of $\phi_{*}$ on the curve classes
$[C_{\arrmu}]$ and $[E_{+}]$.  We first have that  $\phi_{*}([E_{+}]) = - [E_{-}]$.
We also have that
$$\phi_{*}([C_{\arrmu}]) = [C'_{\arrmu}] + |\mu|[E_{-}] \in H_{2}(X_{-})\,.$$
This can be determined via calculating the local intersection multiplicity of both sides with a divisor meeting $E_{-}$ transversely away from $C' \cap E$.
This concludes the proof.

\subsection{Localized flop identity}

Consider the action of $T = \CC^*$ on $Y_{-}$ that fixes the proper transform $S_{-}$ of $\pi^{-1}(0) \subset Y$ and scales its normal bundle.
By construction, it is compatible with the framing, i.e. $C'_{\arrmu}$ is a $T$-fixed subscheme.

We can rewrite the right-hand side of Proposition \ref{toricflopformula} by taking the $T$-fixed points of $\calP(Y_{-},C', \arrmu; r,n)$.
Given a $T$-fixed, $C'_{\arrmu}$-framed stable pair 
$$\calO_{Y_{-}} \rightarrow F$$
on $Y_{-}$, 
the support of $F$ is a $T$-fixed Cohen-Macaulay subscheme.  In particular, the non-reduced structure at a generic point of $E_{-}$ is determined
by a partition $\lambda \sideperp r$.  We choose our partition conventions to match those of $C'_{\arrmu}$, i.e. $\lambda^{(1)}$ determines the nonreduced structure along $S_{-}$, etc.
Given $\lambda$, let $E_{\lambda}$ denote the corresponding thickening of $E_{-}$.
Let 
$$D_{\lambda} \subset Y_{-}$$
be the unique $T$-fixed Cohen-Macaulay subscheme
that agrees with $C'_{\arrmu}$ and $E_{\lambda}$ at their generic points.  

The closed subset of $\calP(Y_{-}, C', \arrmu; r, n)$ with underlying subscheme $D_{\lambda}$ can be stratified based on the zero-dimensional support of the cokernel.  As a result, the topological Euler characteristic of this subset can be determined in terms of punctual contributions based on the types of singularities of $D_{\lambda}$.  
As we vary $\chi(\calE)$, the generating function factors into the corresponding punctual generating functions, which we determine as follows.

Recall by assumption that each irreducible component of $C$ has a unique branch at $p$.  Therefore, for each such component $C_{i}$ (for $i = 1, \dots r$), the proper transform $C'_{i}$ meets $E_{-}$ at a unique point in $\{p_{1}, \dots, p_{e}\}$ with multiplicity $m'_{i}$.
In particular, we have a decomposition of the $r$-tuple of partitions $\arrmu = (\mu_1, \dots, \mu_r)$ into $e$ disjoint subsets
$$\arrmu = \bigsqcup_{k=1}^{e} \arrmu[k]$$
corresponding to which branches which meet $p_k$.

For $1 \leq k \leq e$, the punctual contribution at $p_k$ is determined by the reduced plane curve singularity $D_k$ at $p_{k}$, decorated by the tuple of partitions $(\arrmu[k], \lambda)$.  
This is precisely the $Q=0$ contribution of the framed conifold generating functions defined in the introduction, i.e.
the punctual contribution is
$$\sZ(Y, D_k, (\arrmu[k],\lambda); q, Q=0).$$
That is, we choose an algebraic plane curve with singularity $D_k$ at the origin, and proceed as before.
We set $Q=0$ because we only consider stable pairs supported on the curve singularity itself, rather than allowing any exceptional curve support.  Since $\sZ_Y(q, Q=0) = 1$, we can replace this with the normalized series $\sZ'$.

The other term comes from the punctual contribution of the stratum $E_{-} \backslash \{p_1, \dots, p_e\}$.  
This is given by 
$$H_{\lambda}(q)^{\chi_{\tp}(E_{-} \backslash \{p_{1}, \dots, p_{e}\})} = H_{\lambda}(q)^{2-e}$$
where $H_{\lambda}(q)$ is the contribution of a single point $p \in E_{-} \backslash \{p_1, \dots, p_e\}$.

We have a formula for $H_{\lambda}(q)$, namely the one-leg Calabi-Yau stable pairs vertex, studied in \cite{pt-vertex}.  After a monomial shift so its constant term is $1$, the formula is given by the one-leg topological vertex
$$s_{\lambda}(q^{\rho}) = (-1)^{|\lambda|} q^{\frac{1}{2}h(\lambda) + \frac{1}{4}\kappa_{\lambda}} H_{\lambda}(q)\,,$$
where $q^{\rho}$ denotes the substitution $z_i = q^{-i + 1/2}$ for $i = 1, 2, \dots.$  Since we do not need this equality now, we will recall its proof later in Lemma \ref{basecase}.

The only remaining term to calculate is the monomial normalization determined by the monomial of lowest degree.
The punctual contributions we have written down have constant term $1$, which is the contribution corresponding to trivial cokernel.

In the right-hand side of Proposition \ref{toricflopformula}, the stable pair 
$$\calO_{Y_{-}} \twoheadrightarrow \calO_{D_{\lambda}}$$
contributes to the coefficient of the monomial
$$Q^{|\lambda|}q^{f(\lambda,\arrmu)}$$
where $f(\lambda,\arrmu)$ is determined as follows.

Fix a compactification of $Y_{-}$ and let $\overline{D}_{\lambda}$ and $\overline{C'}_{\arrmu}$ denote the scheme-theoretic closures inside $\overline{Y}_{-}$.
By definition, we have
$$f(\lambda,\arrmu) = 
\chi(\calO_{\overline{D}_{\lambda}}) - \chi(\calO_{\overline{C'}_{\arrmu}}).$$
It is independent of the choice of compactification.

\begin{prop}\label{realblowupidentity}
\begin{align*}
Q^{|\mu|}\sZ'(Y, C, \arrmu; q,Q^{-1}) = 
\frac{1}{\sZ_Y(q,Q)} \sum_{\lambda}
&Q^{|\lambda|}\cdot q^{f(\lambda,\arrmu) + \delta} \cdot\\ &H_{\lambda}(q)^{2-e}\prod_{k=1}^{e} \sZ'(Y, D_{k}, (\arrmu[k], \lambda); q, Q=0) .
\end{align*}

\end{prop}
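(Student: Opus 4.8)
The plan is to combine the flop identity of Proposition~\ref{toricflopformula} (equivalently, the compact version in Proposition~\ref{compactflop}) with the localization analysis carried out in this subsection. By Proposition~\ref{toricflopformula}, the left-hand side $Q^{|\mu|}\sZ'(Y,C,\arrmu;q,Q^{-1})$ equals $q^{\delta}\sZ'(Y_{-},C',\arrmu;q,Q)$, so it suffices to compute $\sZ'(Y_{-},C',\arrmu;q,Q)$ by $T$-localization on $\calP(Y_{-},C',\arrmu;r,n)$. Since $C'_{\arrmu}$ is $T$-fixed and the normalized generating function is a ratio of $\chi_{\tp}$'s, and topological Euler characteristic is computed by restricting to the $T$-fixed locus, I would write
$$\sZ'(Y_{-},C',\arrmu;q,Q) = \frac{1}{\sZ_Y(q,Q)}\sum_{r,n}Q^r q^n \chi_{\tp}\big(\calP(Y_{-},C',\arrmu;r,n)^T\big).$$

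Next I would stratify the $T$-fixed locus according to the underlying $T$-fixed Cohen–Macaulay support curve. As explained in the text, such a curve is determined by a partition $\lambda \sideperp r$ (the generic thickening of $E_-$), giving the subscheme $D_\lambda$; so the fixed locus is a disjoint union over $\lambda$ of the strata with support $D_\lambda$. On the stratum with support $D_\lambda$, the stable pair is determined by the zero-dimensional cokernel, which is supported at the singular points $p_1,\dots,p_e$ of $D_\lambda$ and at points of $E_- \setminus \{p_1,\dots,p_e\}$. Because Euler characteristic is multiplicative over such a stratification and the cokernel data at distinct points are independent, the generating function over this stratum factors as a product of punctual contributions: at each $p_k$ the contribution is the $Q=0$ framed conifold series $\sZ'(Y,D_k,(\arrmu[k],\lambda);q,Q=0)$ (using $\sZ_Y(q,0)=1$ to pass between $\sZ$ and $\sZ'$), and the contribution of the open part $E_-\setminus\{p_1,\dots,p_e\}$ is $H_\lambda(q)^{\chi_{\tp}(E_-\setminus\{p_1,\dots,p_e\})}=H_\lambda(q)^{2-e}$, since $\chi_{\tp}(\PP^1)=2$.

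Finally I would track the monomial normalization. All punctual series above have constant term $1$ (trivial cokernel), so the lowest-order monomial of the stratum indexed by $\lambda$ comes from the distinguished stable pair $\calO_{Y_-}\twoheadrightarrow \calO_{D_\lambda}$, which carries curve class of degree $|\lambda|$ along $E_-$ — contributing $Q^{|\lambda|}$ — and Euler characteristic offset $f(\lambda,\arrmu)=\chi(\calO_{\overline{D}_\lambda})-\chi(\calO_{\overline{C'}_\arrmu})$ relative to the base point of the $\sZ'(Y_-,C',\arrmu)$ expansion — contributing $q^{f(\lambda,\arrmu)}$. Assembling, $\sZ'(Y_-,C',\arrmu;q,Q)=\frac{1}{\sZ_Y(q,Q)}\sum_\lambda Q^{|\lambda|}q^{f(\lambda,\arrmu)}H_\lambda(q)^{2-e}\prod_{k=1}^e \sZ'(Y,D_k,(\arrmu[k],\lambda);q,Q=0)$, and multiplying by $q^\delta$ from Proposition~\ref{toricflopformula} yields the claimed formula. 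The main obstacle I expect is the bookkeeping of the normalization: verifying that the punctual series $\sZ'(Y,D_k,(\arrmu[k],\lambda);q,Q=0)$ and $H_\lambda(q)$ (rather than some monomial multiples of them) are exactly the right building blocks, i.e. that the degrees and Euler characteristics all add up so that the product's lowest-order term is precisely $Q^{|\lambda|}q^{f(\lambda,\arrmu)+\delta}$, and that the identification of the punctual contribution at $p_k$ with the $Q=0$ framed conifold series is compatible with the partition conventions (matching $\lambda^{(1)}$ to the thickening along $S_-$) as specified in the text.
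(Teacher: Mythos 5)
Your proposal is correct and follows essentially the same route as the paper: apply Proposition \ref{toricflopformula} to reduce to $q^{\delta}\sZ'(Y_-,C',\arrmu;q,Q)$, localize to the $T$-fixed locus, stratify by the underlying $T$-fixed Cohen--Macaulay curve $D_\lambda$, factor the Euler characteristic into punctual contributions at the $p_k$ and along $E_-\setminus\{p_1,\dots,p_e\}$, and fix the normalization via the distinguished pair $\calO_{Y_-}\twoheadrightarrow\calO_{D_\lambda}$ with offset $f(\lambda,\arrmu)=\chi(\calO_{\overline{D}_\lambda})-\chi(\calO_{\overline{C'}_{\arrmu}})$. The normalization bookkeeping you flag as the main remaining obstacle is exactly what the paper handles in the lemma computing $f(\lambda,\arrmu)=h(\lambda)-\sum_i m'_i(\mu_i,\lambda)$.
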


Regarding the monomial shift $f(\lambda,\arrmu)$,
we can give a precise formula for it as follows.

Given a partition $\mu_{i}$,
let
$$(\mu_{i},\lambda) = \sum_{j} \mu_{i}^{(j)} \cdot \lambda^{(j)}$$
be the dot product with $\lambda$.

\begin{lem}
We have
\begin{align*}
f(\lambda,\arrmu) = \chi(\calO_{E_{\lambda}}) - \sum_{i=1}^{r} m'_{i} (\mu_{i}, \lambda) = h(\lambda) - \sum_{i=1}^{r} m'_{i} (\mu_{i},\lambda)
\end{align*}
where $$h(\lambda) = \sum_{\Box} h(\Box)$$
is the sum of the hook-lengths of $\lambda$.
\end{lem}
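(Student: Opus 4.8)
The plan is to compute both sides of the claimed formula for $f(\lambda,\arrmu)$ directly from its definition
$$f(\lambda,\arrmu) = \chi(\calO_{\overline{D}_{\lambda}}) - \chi(\calO_{\overline{C'}_{\arrmu}}),$$
using the fact that $f$ is independent of the chosen compactification so we may work on $\overline{Y}_-$ and stratify contributions. The key geometric observation is that $D_\lambda$ is, by construction, the unique $T$-fixed Cohen--Macaulay subscheme agreeing with $C'_{\arrmu}$ and with $E_\lambda$ at their generic points; hence $D_\lambda$ is the scheme-theoretic union of $C'_{\arrmu}$ and $E_\lambda$ away from their intersection points, and the only discrepancy between $\chi(\calO_{\overline{D}_\lambda})$ and $\chi(\calO_{\overline{C'}_{\arrmu}}) + \chi(\calO_{\overline{E_\lambda}})$ comes from the overlap at the points $p_1,\dots,p_e$. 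Since everything restricts trivially to $E$ on the complement, the closures only differ along the $(-1,-1)$-curve, so I would reduce the entire computation to a local analysis at each $p_k$.

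First I would set up the Mayer--Vietoris / normalization sequence relating $\calO_{\overline{D}_\lambda}$ to $\calO_{\overline{C'}_{\arrmu}} \oplus \calO_{E_\lambda}$ and the structure sheaf of their scheme-theoretic intersection, giving
$$\chi(\calO_{\overline{D}_\lambda}) = \chi(\calO_{\overline{C'}_{\arrmu}}) + \chi(\calO_{E_\lambda}) - \sum_{k=1}^e \chi(\calO_{\overline{C'}_{\arrmu} \cap E_\lambda, \, p_k}),$$
from which $f(\lambda,\arrmu) = \chi(\calO_{E_\lambda}) - \sum_k \ell_{p_k}$ where $\ell_{p_k}$ is the length of the scheme-theoretic intersection at $p_k$. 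Next I would compute $\chi(\calO_{E_\lambda})$: since $E_\lambda$ is the thickening of $E_- \cong \PP^1$ determined by $\lambda$ inside the normal $\calO(-1)\oplus\calO(-1)$ directions, a standard computation with the filtration by the symmetric powers of the conormal bundle gives $\chi(\calO_{E_\lambda}) = h(\lambda)$, the sum of hook lengths — indeed this is precisely the appearance of $h(\lambda)$ in the vertex formula $s_\lambda(q^\rho) = (-1)^{|\lambda|}q^{\frac12 h(\lambda)+\frac14\kappa_\lambda}H_\lambda(q)$ recalled above, so I would cite or re-derive this identity $\chi(\calO_{E_\lambda}) = h(\lambda)$. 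This establishes the second equality in the lemma.

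The remaining, and main, task is the local intersection length $\ell_{p_k}$. Working in the toric chart with coordinates $(x,y,z)$ where $\pi$ projects to $z$, the proper transform $C'_i$ meets $E_-$ at $p_k$ with multiplicity $m'_i$, and by the defining equations $z^{j-1}f_i^{\mu_i^{(j)}}$ of $C_{i,\mu_i}$ the component $C'_{i,\mu_i}$ near $p_k$ looks like a union of branches of the form (power of $z$)(power of the local equation of $C'_i$), while $E_\lambda$ is cut out by the $z$-direction thickened according to $\lambda$. Pairing the $j$-th piece of $\mu_i$ against the $j$-th piece of $\lambda$ (the $z$-exponent on one side against the $z$-thickening level on the other) produces, after summing over $j$ and over the components meeting $p_k$, exactly $\sum_{i : C_i' \ni p_k} m'_i (\mu_i,\lambda)$; summing over $k$ gives $\sum_{i=1}^r m'_i(\mu_i,\lambda)$. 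I expect the bookkeeping here — correctly matching partition parts to $z$-powers versus thickening orders, and verifying the Cohen--Macaulayization of the union does not change lengths — to be the only real obstacle; everything else is formal. Once $\ell_{p_k}$ is identified, combining with $\chi(\calO_{E_\lambda}) = h(\lambda)$ yields
$$f(\lambda,\arrmu) = h(\lambda) - \sum_{i=1}^r m'_i(\mu_i,\lambda),$$
which is the assertion.
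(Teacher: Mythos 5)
Your proposal is correct and is exactly the argument the paper leaves implicit (the lemma is stated without proof): the Mayer--Vietoris decomposition $\chi(\calO_{\overline{D}_{\lambda}}) = \chi(\calO_{\overline{C'}_{\arrmu}}) + \chi(\calO_{E_{\lambda}}) - \sum_{k}\ell_{p_{k}}$, the conormal-filtration computation $\chi(\calO_{E_{\lambda}}) = \sum_{(a,b)\in\lambda}(1+a+b) = h(\lambda)$, and the $z$-graded local length count $\ell_{p_{k}} = \sum_{c\ge 0}\dim \CC[[x,w]]/(\prod_i g_i^{\mu_i^{(c+1)}}, x^{\lambda^{(c+1)}}) = \sum_{i}m'_{i}(\mu_{i},\lambda)$ are all correct. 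The only step worth making fully explicit is that $D_{\lambda}$ coincides with the scheme-theoretic union $C'_{\arrmu}\cup E_{\lambda}$ (which holds because a pure one-dimensional subscheme is determined by its generic behavior), so the Mayer--Vietoris sequence applies as you use it.
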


\section{Skein models}

In this section, we recall definitions and basic facts about the framed HOMFLY skein of a surface, following papers of Morton and collaborators \cite{aiston-morton, lukac-morton, morton-manchon}.
In particular, we give a definition of the colored HOMFLY polynomial in this formalism and summarize the previous calculations from their papers that we
will need here.  
For our purposes, we only need to understand the lowest degree terms of these calculations but we state the full answers anyways for the sake of completeness.
Nearly all of the material in this section is known to experts; we refer the reader to the references, especially \cite{lukac-thesis}, for more details.

\subsection{Recap of skein theory}\label{skeinrecap}

Fix the coefficient ring
$$\Lambda = \ZZ\left[v^{\pm 1}, s^{\pm 1}, \frac{1}{s^{r}- s^{-r}}, r\geq 1\right].$$ 

For a planar surface $F$ with boundary and designated input and output boundary points, the framed Homfly skein over $\Lambda$
is defined as the $\Lambda$-module generated by oriented diagrams in $F$, up to isotopy and the second and third Reidemeister moves, and modulo the skein relations

\bc\begin{enumerate}
\item[(i)] \qquad{$\Xor\  -\ \Yor \qquad =\qquad{(s-s^{-1})}\quad\ \Ior \ ,$}
\vspace*{2mm}
\item[(ii)] \qquad
{$ \Rcurlor \quad=\quad {v^{-1}}\quad \Idor\ ,\qquad \Lcurlor \quad=\quad {v}\quad \Idor. $}
\item[(iii)]\qquad
\unknot \quad = \quad $\frac{v^{-1}-v}{s-s^{-1}}\in\Lambda$.
\end{enumerate}
\ec

Notice our sign conventions for skein relations are different from those of the introduction, in order to match the literature.
We will only be interested in the following cases of this construction.

In the case where $F$ is a rectangle with $m$ inputs at the bottom and $m$ outputs at the top,
the Homfly skein $\calH_m$ carries a product given by stacking rectangles; as an algebra $\calH_m$ can be identified with
the type $A$ Hecke algebra $H_m(z)$ with coefficient ring $\Lambda$.

When $F$ is an annulus with no boundary points, we denote the corresponding Homfly skein by $\calC$.  It is a commutative algebra with product obtained by placing one annulus inside another.  There is a natural map of $\Lambda$-modules from 
$$\widehat{\cdot}: \calH_m \rightarrow \calC$$ 
given by sending a braid $T$ to its closure $\widehat{T}$.
The image of the closure map is $\calC_m$; the submodule $\calC_+$ generated by $\cup_{m \geq 0} \calC_m$ is a subalgebra of $\calC$, generated by diagrams for which all strands are oriented counter-clockwise.

It follows from work of Turaev \cite{turaev} that there exists a grading-preserving isomorphism between $\calC_+$ and the ring of symmetric functions in infinitely many variables, with coefficients in $\Lambda$.  This identification is very useful for the skein-theoretic computations in \cite{lukac-morton, morton-manchon}.  
For each partition $\lambda \vdash m$, there exists an associated idempotent element $e_\lambda \in \calH_m$ constructed by Gyoja \cite{gyoja} and studied by Aiston-Morton \cite{aiston-morton}, deforming the classical idempotents of the group algebra of the symmetric group $\Sigma_m$ obtained from Young symmetrizers.
We denote their closures by
$$Q_{\lambda} = \widehat{e_{\lambda}} \in \calC_m;$$
it is proven in \cite{lukac, aiston-morton} that $Q_{\lambda}$ is sent to the Schur function $s_{\lambda}$ under the identification with symmetric functions.

Finally, when $F = \RR^2$, the skein of $F$ is just the ring of scalars $\Lambda$.  In particular, by embedding the annulus into $\RR^2$, we obtain a trace map
$$\langle \rangle: \calC_+ \rightarrow \Lambda.$$

\subsection{Colored HOMFLY polynomials}\label{coloredhomfly}

 Let $\cL$ be a framed link with $r$ labelled components, and let $Q_1, \dots, Q_r$ be diagrams in the skein model of the 
 annulus, with counterclockwise orientation.  The satellite link $$\cL*(Q_1, \dots, Q_r)$$
 is obtained by drawing the diagram $Q_i$ on the annular neighborhood of the $i$-th strand of $\cL$ determined by the framing.  
 Since each $Q_i$ has a natural blackboard framing as a diagram on the annulus, the satellite link inherits a framing as well.
 
 If $\cL$ arises from a diagram in the annulus (with the blackboard framing and counterclockwise orientation), then the class of the satellite link in $\calC_+$ only depends 
 on the equivalence classes of the decorations $[Q_1], \dots, [Q_r] \in \calC_+$.  By construction,
 this class behaves linearly with respect to the decorations.
 
By taking the trace,
 we define the framed HOMFLY polynomial
 $$\langle \cL*(Q_1, \dots, Q_r) \rangle \in \Lambda$$
for any $Q_i \in \calC_+$ which again will behave linearly with respect to $Q_i$.

In particular, if we have $r$ partitions $\arrlam = (\lambda_{1}, \dots, \lambda_{r})$, the satellite element
$$\cL*(Q_{\arrlam}) = \cL*\left(Q_{\lambda_{1}}, \dots, Q_{\lambda_{r}}\right)$$
is formed by decorating the strands of $\cL$ with the idempotent closures
$Q_{\lambda_{i}}$.
After adding an explicit framing factor, the colored HOMFLY polynomial of $\cL$ is defined by
$$\sW(\cL, \arrlam, v,s) = s^{-\sum_{i=1}^{r}\writhe(\cL_i)\kappa_{\lambda_{i}}} v^{\sum_{i=1}^{r}\writhe(\cL_i)|\lambda_i|}
\langle \cL*(Q_{\arrlam})\rangle \in \Lambda.$$

In the above expression, given a component $\cL_i$ of the link, $\writhe(\cL_i)$ denotes the writhe of the component, i.e. the number of
crossings, weighted by sign.  Given a partition $\lambda$, 
$|\lambda|$ is the sum of the parts and we set
$$\kappa_{\lambda} = \sum_{j} \lambda^{(j)}(\lambda^{(j)} - 2j + 1) = 2 \sum_{\Box \in \lambda} c(\Box)$$
where $c(\Box)$ is the content of a box.

The monomial factor ensures that the colored HOMFLY polynomial is independent of the framing of $\cL$.  
It is shown in \cite{lukac-thesis} that this definition agrees with the definition via quantum groups (as given, for instance, in \cite{lin-zheng}).  
Furthermore, it also proven there that if $\lambda_i = (1)$ for all $i$, we recover the HOMFLY polynomial $\sP(L; v,s)$ by the relation
$$\sP(L; v, s)= (- 1)^{|L|} v^{2\mathrm{lk}(L)} \sW(L, (1), \dots, (1); v, s)$$
where $|L|$ is the number of components of $L$ and $\mathrm{lk}(L)$ is the total linking number of $L$.

\subsection{Torus and splice calculations}\label{torussplice}

As we shall explain in Section $4$, algebraic links
can be constructed by iterating the satellite construction described above, starting from a few basic diagrams in the annulus.

By linearity,
in order to calculate their colored HOMFLY polynomials,
it suffices to take one of these basic diagrams and calculate the element in $\calC_+$ obtained
by decorating its strands with idempotent closures $Q_{\lambda}$, expanded as a linear combination of basis elements $Q_{\nu}$.  
The exact calculations can all be written in terms
of standard operations on symmetric functions.

The first diagram we need is the torus link.  Given a pair of relatively prime positive integers $(m,n)$, 
let $\beta_{m}^{n}\in \calH_m$ denote the $n$-th power of the braid $\beta_m$ with $m$ strands:
\begin{figure*}[htp]
\centering
\includegraphics[scale=0.50]{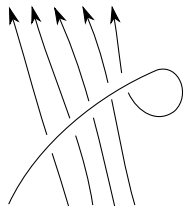}
\caption{$\beta_{m}$ for $m = 5$ }
\end{figure*}

Let $$T_{m}^{n}(Q_\lambda) \in \calC_+$$
denote the element obtained by decorating $T_{m}^{n} = \widehat{\beta_{m}^{n}}$ with $Q_{\lambda}$.

In order to calculate its coefficients when expanded in the basis $Q_{\nu}$, we first recall two operations on the ring of symmetric functions.
First, given a symmetric function
$f(z_1, z_2,\dots)$ and a positive integer $m$, we define the plethysm $f[p_m]$ to be the symmetric function
$$f[p_m](z_1,z_2,\dots) = f(z_1^{m}, z_2^{m},\dots).$$
Given $\lambda$, we define $Q_{\lambda}[p_m] \in \calC_+$ to be the element of the annulus skein corresponding to 
the symmetric function $s_{\lambda}[p_m]$.  Notice that, when we expand $Q_{\lambda}[p_m]$ in
terms of the basis $\{Q_{\mu}\}$, the coefficients are integers rather than rational functions.

The other operation is the framing operator 
$$\tau: \calC_{+} \rightarrow \calC_{+}$$
which is defined by the satellite diagram
$$X \mapsto T_{1}^{1}*(X).$$
Geometrically, this corresponds to applying a full twist to the strands of $X$.
By \cite{aiston-morton}, it
has eigenbasis $Q_{\lambda}$ with eigenvalues given by
$$\tau(Q_{\lambda}) = v^{-|\lambda|}s^{\kappa_{\lambda}} Q_{\lambda}.$$

In these terms, the evaluation of the decorated torus link is given by the following formula, proven in \cite{morton-manchon}:
\begin{lem}[Theorem $13$, \cite{morton-manchon}]\label{torusformula}
$$T_{m}^{n}(Q_\lambda)=\tau^{n/m}(Q_{\lambda}[p_{m}]).$$
\end{lem}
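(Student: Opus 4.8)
\emph{Proof sketch (following Morton--Manchon).}
The plan is to realise $T_m^n(Q_\lambda)$ as the closure of a cabled braid, to evaluate that closure through the representation theory of the Hecke algebra, and to match the outcome with $\tau^{n/m}(Q_\lambda[p_m])$. First I would rewrite the decorated torus link concretely. Cabling each of the $m$ strands of $\beta_m^n$ by $|\lambda|$ parallel strands turns $\beta_m$ into the block cyclic-shift braid $B=\mathrm{cable}_{|\lambda|}(\beta_m)$ on $m$ blocks of $|\lambda|$ strands, so $\beta_m^n$ becomes $B^n$. Because $\gcd(m,n)=1$ the permutation underlying $\beta_m^n$ is a single $m$-cycle, so on closing up in the annulus the $m$ blocks of the cable are joined into one band; since the $|\lambda|$ strands of a block never braid among themselves as the block is carried along by $B$, idempotency of $e_\lambda$ lets one insert $e_\lambda$ on every block at no cost. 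Accounting via relation (ii) for the framing normalisation of the torus-knot pattern (the difference between the blackboard framing of $\widehat{\beta_m^n}$ and the framing used for satellites), this gives
\[
T_m^n(Q_\lambda)=\big(v^{-|\lambda|}s^{\kappa_\lambda}\big)^{n}\,\widehat{B^{n}e_\lambda^{\otimes m}}\ ,
\]
reducing the statement to evaluating the closures $\widehat{B^{n}e_\lambda^{\otimes m}}\in\calC_+$.

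Next I would use that, by cyclicity, the annular closure $\calH_{m|\lambda|}\to\calC_+$ factors through the cocenter of $\calH_{m|\lambda|}$; comparing on the identity (where $\widehat{\mathrm{id}}=h_1^{m|\lambda|}=\sum_{\mu}f_\mu s_\mu$) and on the idempotents $e_\mu$ one gets $\widehat{a}=\sum_{\mu}\Tr\big(\rho_\mu(a)\big)Q_\mu$, where $\rho_\mu$ is the irreducible representation of $\calH_{m|\lambda|}$ labelled by $\mu$. Since $B$ commutes with $e_\lambda^{\otimes m}$ and preserves each block, $\Tr(\rho_\mu(B^{n}e_\lambda^{\otimes m}))$ is the trace of the $n$-th power of $\rho_\mu(B)$ on the fixed subspace of the projection $\rho_\mu(e_\lambda^{\otimes m})$. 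On that subspace $B^{m}$ is scalar: the braid identity $B^{m}=\mathrm{cable}_{|\lambda|}(\Delta_m^{2})=\Delta_{m|\lambda|}^{2}\cdot\prod_{i=1}^{m}(\Delta^{2}_{|\lambda|})_i^{-1}$, together with the facts that the full twist $\Delta^{2}_{m|\lambda|}$ is central and acts on $\rho_\mu$ by $s^{\kappa_\mu}$ while the internal full twist $(\Delta^{2}_{|\lambda|})_i$ acts on $e_\lambda$ by the pure-braid scalar $s^{\kappa_\lambda}$, yields $(\rho_\mu(B)|_{\mathrm{fix}})^{m}=s^{\kappa_\mu-m\kappa_\lambda}\,I$. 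Hence every eigenvalue of $\rho_\mu(B)|_{\mathrm{fix}}$ has the form $\zeta\, s^{\kappa_\mu/m-\kappa_\lambda}$ with $\zeta^{m}=1$, and $\Tr(\rho_\mu(B^{n}e_\lambda^{\otimes m}))=s^{\,n(\kappa_\mu/m-\kappa_\lambda)}\sum_j\zeta_j^{\,n}$.

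It remains to identify the sums $\sum_j\zeta_j^{\,n}$. As $s$ runs over the positive reals the eigenvalues vary continuously while $\zeta_j=\xi_j\,s^{\kappa_\lambda-\kappa_\mu/m}$ lies in the finite set of $m$-th roots of unity, so each $\zeta_j$ is independent of $s$ and the sum may be computed in the classical limit $s=1$, where $\calH_{m|\lambda|}$ degenerates to $\ZZ[S_{m|\lambda|}]$, $e_\lambda^{\otimes m}$ to a primitive idempotent for an outer tensor power of $V_\lambda$, and $B$ to the block $m$-cycle $b$. Thus $\sum_j\zeta_j^{\,n}=\Tr_{V_\mu}(b^{n}e_\lambda^{\otimes m})$, and I would invoke the classical plethysm identity
\[
\sum_{\mu}\Tr_{V_\mu}\!\big(b^{n}e_\lambda^{\otimes m}\big)\,s_\mu=s_\lambda[p_m]\qquad(\gcd(m,n)=1),
\]
i.e. the wreath-product (equivalently $m$-core/$m$-quotient) description of $s_\lambda[p_m]$, which follows from $\sum_\mu\chi^\mu(w)s_\mu=p_{\mathrm{type}(w)}$. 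Combining the displays one gets $\widehat{B^{n}e_\lambda^{\otimes m}}=\sum_\mu c_{\lambda\mu}\,s^{\,n(\kappa_\mu/m-\kappa_\lambda)}Q_\mu$ with $c_{\lambda\mu}$ the $s_\mu$-coefficient of $s_\lambda[p_m]$; multiplying by the framing monomial and using $\tau^{n/m}(Q_\mu)=v^{-n|\lambda|}s^{\,n\kappa_\mu/m}Q_\mu$ for $\mu$ in the support (recall $|\mu|=m|\lambda|$) reproduces $\tau^{n/m}(Q_\lambda[p_m])$ exactly.

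The step I expect to be the main obstacle is this last one: the continuity/classical-limit reduction must be made rigorous — in particular monodromy of eigenvalues must be excluded, which here is automatic from the forced constancy of the $\zeta_j$ — and one must supply the correct classical identity for $s_\lambda[p_m]$ in terms of the cyclic-group-twisted trace on $V_\mu$. For the applications in this paper only the $v$-free part of $T_m^n(Q_\lambda)$ (its lowest-degree term) is needed, which is $\widehat{B^{n}e_\lambda^{\otimes m}}$ up to an overall power of $s$; and in the uncolored case $\lambda=(1)$ used for Theorem \ref{mainthm1} the idempotent disappears, $B=\beta_m$, and one only needs $\Tr(\rho_\mu(\beta_m^{n}))=\chi^\mu(m\text{-cycle})\,s^{\,n\kappa_\mu/m}$, the $|\lambda|=1$ case of the computation above, which recovers the hook expansion $p_m=\sum_{r\ge 0}(-1)^{r}s_{(m-r,1^{r})}$.
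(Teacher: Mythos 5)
Your proposal is correct and is essentially the argument the paper relies on: the paper does not reprove this lemma but quotes it as Theorem 13 of Morton--Manch\'on, whose proof --- and the paper's own Appendix proof of the parallel splice formula, Lemma \ref{spliceformula}, which explicitly "follows the proof in [Lin--Zheng] closely" --- proceeds exactly as you do, via cabling, the Rosso--Jones analysis showing the cabled braid's $m$-th power acts as the scalar $s^{\kappa_\mu - m\kappa_\lambda}$ on the image of $e_\lambda^{\otimes m}$, and specialization to $s=1$ to identify the root-of-unity sums with the symmetric-group characters computing $s_\lambda[p_m]$. Your framing factor $\left(v^{-|\lambda|}s^{\kappa_\lambda}\right)^{n}$ correctly accounts for the discrepancy between the torus-surface framing used for satellites and the blackboard framing of $\widehat{\beta_m^n}$, which is the one convention-sensitive point.
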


If $m$ and $n$ have common factor $d$, then $T_{m}^{n}$ is the satellite link
$T_{m/d}^{n/d}(\widehat{\id_{d}})$ where $\id_d$ denotes the identity braid in $\calH_d$.
In particular, it has $d$ components and we can reduce the corresponding satellite invariants to the previous lemma
by the identity
$$T_{m}^{n}(Q_1, \dots, Q_d) = T_{m/d}^{n/d}(\prod_{i=1}^{d} Q_i)$$
where the product on the right-hand side is multiplication in $\calC_+$.

The second diagram we need allows us to splice together a torus knot and another diagram.  Again, given $(m,n)$ relatively prime,
let $\sigma_{m}^{n}\in \calH_{m+1}$ denote the $n$-th power of the braid $\sigma_m$, obtained by adding 
an extra strand to $\beta_m$; see figure $2$.
\begin{figure*}[htp]
\centering
\includegraphics[scale=0.50]{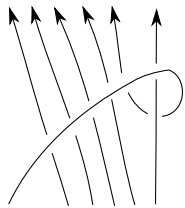}
\caption{$\sigma_{m}$ for $m=5$}
\end{figure*}
The corresponding braid closure corresponds to taking a torus knot on the surface of the torus along with an interior circle.

Let $S_{m}^{n}*(Q_{\lambda}, Q_{\mu}) \in \calC_+$ denote the result of decorating
$\widehat{\sigma_{m}^{n}}$ with $Q_{\lambda}$ and $Q_{\mu}$, so that the latter is on the added strand.

We will prove the following lemma in an appendix; its proof is completely analogous to the torus link case.

\begin{lem}\label{spliceformula}
$$S_{m}^{n}*(Q_{\lambda}, Q_{\mu})= v^{n/m|\mu|}s^{-n/m\kappa_{\mu}}\tau^{n/m}(Q_{\lambda}[p_m]\cdot Q_{\mu}).$$
\end{lem}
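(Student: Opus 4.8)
The plan is to reduce Lemma~\ref{spliceformula} to Lemma~\ref{torusformula} by recognizing the closure $\widehat{\sigma_m^n}$ as a satellite of the torus braid $\beta_m$. Geometrically, the braid $\sigma_m$ obtained by adjoining an extra strand to $\beta_m$ has the property that, when one ignores the added strand, one recovers $\beta_m$; the added strand simply runs parallel to the solid torus. So I would first interpret $\widehat{\sigma_m^n}$, with its two colorings $Q_\lambda$ on the original $m$ strands and $Q_\mu$ on the extra strand, as the satellite $T_m^n * (Q_\lambda \sqcup \text{(the extra circle)})$, where the extra circle carries $Q_\mu$ and sits in a sub-annulus of the annular neighborhood. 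The key point is to track precisely where the extra strand sits relative to the $n$ twists: in $\sigma_m^n$ the added strand is twisted together with the bundle of $m$ strands, but it is itself \emph{not} internally twisted, so after pushing the $n/m$ fractional twist through, the extra strand picks up a framing correction but no plethysm.

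The main body of the argument, then, mirrors the proof of Lemma~\ref{torusformula} in \cite{morton-manchon}. First I would establish, on the level of the annulus skein $\calC_+$, that decorating $\widehat{\sigma_m^n}$ amounts to multiplying the decorated torus element by the decoration on the extra parallel strand, i.e. $S_m^n * (Q_\lambda, Q_\mu) = \tau^{n/m}\bigl( (Q_\lambda[p_m]) \cdot (\text{extra strand contribution}) \bigr)$; here the product is the commutative product in $\calC_+$ coming from nested annuli. The extra strand, running once around the solid torus and caught up in the $(m,n)$ cabling pattern, contributes $Q_\mu$ but acquires writhe, hence a framing anomaly. Applying the framing operator eigenvalue formula $\tau(Q_\mu) = v^{-|\mu|} s^{\kappa_\mu} Q_\mu$ from \cite{aiston-morton} to account for the fractional twist $n/m$ applied to the $\mu$-colored strand produces exactly the scalar $v^{n/m\,|\mu|} s^{-n/m\,\kappa_\mu}$ — note the sign is the inverse of the eigenvalue because the extra strand winds in the opposite sense relative to the internal twisting, or equivalently because we are correcting for a framing shift rather than imposing one. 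Combining these gives the stated formula.

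Concretely, the steps in order are: (1) describe $\sigma_m$ and its closure as a diagram on the torus plus one interior circle, and write $\widehat{\sigma_m^n}$ as a satellite of $T_m^n = \widehat{\beta_m^n}$ with the interior circle as the companion for the extra strand; (2) invoke the expansion $Q_\lambda[p_m]$ for the $m$-strand cable exactly as in Lemma~\ref{torusformula}, noting that the extra strand is wound once (not $m$ times) so it contributes $Q_\mu$ with no plethysm; (3) isolate the framing discrepancy on the extra strand — it differs from the honest blackboard framing by $n/m$ full twists — and apply the eigenvalue of $\tau$ on $Q_\mu$ to convert this into the scalar $v^{n/m|\mu|}s^{-n/m\kappa_\mu}$; (4) assemble: $S_m^n*(Q_\lambda,Q_\mu) = v^{n/m|\mu|}s^{-n/m\kappa_\mu}\,\tau^{n/m}\bigl(Q_\lambda[p_m]\cdot Q_\mu\bigr)$. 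Since the calculation is formally identical to the torus link case once the satellite decomposition is set up, the argument should be short.

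The main obstacle I expect is step (3): getting the framing bookkeeping — and in particular the \emph{sign} of the exponents on $v$ and $s$ — exactly right. The braid $\sigma_m^n$ imposes a specific relative framing on the extra strand, and one must be careful about orientation conventions (counterclockwise strands in $\calC_+$), about whether $\tau$ or $\tau^{-1}$ governs the correction, and about the fact that $\writhe$ of the added strand in $\widehat{\sigma_m^n}$ is proportional to $n/m$ times something. Because the paper only needs the lowest-degree terms of these invariants, a possible shortcut would be to verify the leading-order behavior directly and defer the full framing computation; but since the statement asserts the exact formula, I would instead pin down the framing by evaluating both sides on a small explicit case (e.g. $m=1$, arbitrary $n$, arbitrary $\mu$ with $\lambda$ empty or $(1)$), which forces the signs, and then the general case follows by the same manipulation as in \cite{morton-manchon}.
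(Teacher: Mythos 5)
Your structural intuition is right: the prefactor $v^{\frac{n}{m}|\mu|}s^{-\frac{n}{m}\kappa_{\mu}}$ does compensate for the fact that the added strand is carried around by the cable without acquiring its own self-twisting, consistently with the geometric identity $S_{m}^{n}(Q_{\lambda},\tau Q_{\mu})=\tau(S_{m}^{n-m}(Q_{\lambda},Q_{\mu}))$. But the proof you outline has a genuine gap at its core: you treat the lemma as a satellite decomposition plus framing bookkeeping, whereas the actual content is the fractional-power formula, and for $(m,n)$ coprime $\tau^{n/m}$ is not a geometric operation that can be ``pushed through'' a diagram. It is only defined on the eigenbasis, by $\tau^{n/m}Q_{\rho}=v^{-\frac{n}{m}|\rho|}s^{\frac{n}{m}\kappa_{\rho}}Q_{\rho}$, so the lemma asserts that the coefficient of $Q_{\rho}$ in $S_{m}^{n}*(Q_{\lambda},Q_{\mu})$ equals $c_{\rho}\,v^{\frac{n}{m}(|\mu|-|\rho|)}s^{\frac{n}{m}(\kappa_{\rho}-\kappa_{\mu})}$, where $s_{\lambda}[p_{m}]\cdot s_{\mu}=\sum_{\rho} c_{\rho}s_{\rho}$ (and, implicitly, that $\frac{1}{m}(\kappa_{\rho}-\kappa_{\mu})\in\ZZ$ whenever $c_{\rho}\neq 0$). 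No amount of writhe accounting on the extra strand yields this: for $n\not\equiv 0\pmod m$ the interior circle is genuinely braided with the cable, so the decorated closure is not the nested-annuli product $Q_{\lambda}[p_{m}]\cdot Q_{\mu}$ up to a framing scalar. Your proposed calibration on the case $m=1$ never sees a fractional exponent, so it cannot pin down the general formula either.

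The missing ingredient is the Rosso--Jones/Lin--Zheng eigenvalue argument, which is also what the proof of Lemma~\ref{torusformula} in \cite{morton-manchon} actually consists of (it is not a framing computation there either). Concretely: pass to the Hecke algebra and form $X_{\lambda,\mu}=\pi_{\rho}\,\widetilde{\sigma_{m}}^{\,n}\,p_{\lambda,\mu}$ with $p_{\lambda,\mu}=e_{\lambda}^{\otimes m}\otimes e_{\mu}$; observe that the $mK$-th powers of the braid are, up to full curls on the added strand (which act on $Q_{\mu}$ by the known eigenvalue of $\tau$), full twists whose decorated closures are computed by Lemma~\ref{torusformula}; deduce that the eigenvalues of $X_{\lambda,\mu}^{m}$ are the monomials $v^{n(|\mu|-|\rho|)}s^{n(\kappa_{\rho}-\kappa_{\mu})}$ or zero, hence that $\Tr_{V_{\rho}}X_{\lambda,\mu}=a_{\rho}\,v^{\frac{n}{m}(|\mu|-|\rho|)}s^{\frac{n}{m}(\kappa_{\rho}-\kappa_{\mu})}$ for some $a_{\rho}\in\QQ$; and finally evaluate $a_{\rho}$ in the classical limit $v,s\to 1$, where the braid degenerates to a permutation and $a_{\rho}$ becomes a symmetric-group character, identified via the Littlewood--Richardson rule as the coefficient of $s_{\rho}$ in $s_{\lambda}[p_{m}]\cdot s_{\mu}$. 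This specialization step is what determines both the coefficient and the correct choice of $m$-th root of the eigenvalue, and it has no counterpart in your outline.
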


One corollary of this is the following expression. 
Given positive integers $m$ and $n$ such that $n \geq m$, we have the identity
\begin{equation}\label{framedsplice}
S_{m}^{n}(Q_{\lambda}, \tau Q_{\mu}) = \tau(S_{m}^{n-m}(Q_{\lambda}, Q_{\mu})).
\end{equation}
This can also be seen directly by an isotopy of diagrams.

\subsection{Unknot and Hopf calculations}\label{unknothopfcalc}

In this section, we state the HOMFLY polynomials for the colored unknot and Hopf link, worked out in \cite{lukac-morton}.

\begin{prop}[\cite{lukac-morton}, Equation $(12)$]\label{unknotformula}
$$\langle Q_{\lambda} \rangle = \prod_{\Box \in \lambda} \frac{v^{-1}s^{c(\Box)} - v s^{-c(\Box)}}{s^{h(\Box)} - s^{-h(\Box)}}$$
\end{prop}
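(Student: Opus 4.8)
The plan is to compute the trace $\langle Q_\lambda\rangle$ by evaluating the closure of the Gyoja--Aiston idempotent $e_\lambda \in \calH_{|\lambda|}$ in the skein of $\RR^2$, exploiting the identification of $\calC_+$ with symmetric functions together with a quantum-group (or direct skein) evaluation of $s_\lambda$. First I would recall that under the Turaev isomorphism $\calC_+ \cong \Lambda[\text{symmetric functions}]$, the element $Q_\lambda$ corresponds to the Schur function $s_\lambda$, and the trace map $\langle\ \rangle: \calC_+ \to \Lambda$ is an algebra map determined by its values on the power sums (or on the one-part rows $Q_{(m)}$). The strategy is then to reduce the claim to a \emph{principal specialization} statement: the trace functional is exactly the specialization of symmetric functions sending the $k$-th power sum $p_k$ to $\frac{v^{-k}-v^{k}}{s^{k}-s^{-k}}$, equivalently $x_i \mapsto v^{-1}s^{1-2i}$ for $i\ge 1$ (a ``quantum dimension'' specialization). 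Under this specialization, the classical principal-specialization / hook-content formula for Schur functions $s_\lambda(1,t,t^2,\dots)$ immediately yields the product over boxes $\prod_{\Box}\frac{v^{-1}s^{c(\Box)} - v s^{-c(\Box)}}{s^{h(\Box)}-s^{-h(\Box)}}$, where $c(\Box)$ is the content and $h(\Box)$ the hook length.

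Concretely, the steps I would carry out are: (1) identify the trace on $\calC_+$ with an algebra homomorphism, using that the closure map and the algebra structure on $\calC_+$ are compatible, so $\langle\ \rangle$ is determined on a set of algebra generators; (2) evaluate $\langle Q_{(m)}\rangle$ and $\langle Q_{(1^m)}\rangle$ (the symmetric and antisymmetric powers) directly from the skein relations --- this is the standard computation, e.g. by resolving the closure of a single row/column idempotent recursively, giving $\langle Q_{(m)}\rangle = \prod_{j=0}^{m-1}\frac{v^{-1}s^{j} - v s^{-j}}{s^{j+1}-s^{-(j+1)}}$ and similarly for columns with contents $-j$; (3) match this with the generating-function identity $\sum_m \langle Q_{(m)}\rangle z^m = \prod_{i\ge 1}\frac{1}{1 - v^{-1}s^{1-2i} z}$ up to the evident rescaling, which pins down the specialization $p_k \mapsto \frac{v^{-k}-v^{k}}{s^k - s^{-k}}$; (4) apply this specialization to the Jacobi--Trudi / principal specialization formula for a general $s_\lambda$, and invoke the classical hook-content theorem (e.g. Macdonald, I.3, Example 4) to rewrite the resulting product over the diagram as $\prod_{\Box\in\lambda}\frac{v^{-1}s^{c(\Box)} - v s^{-c(\Box)}}{s^{h(\Box)} - s^{-h(\Box)}}$.

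Alternatively, and perhaps more cleanly for bookkeeping, I would run the induction directly in the skein: use the Aiston--Morton description of $e_\lambda$ via row and column symmetrizers/antisymmetrizers and a branching/Pieri rule in $\calC_+$ to express $\langle Q_\lambda\rangle$ via $\langle Q_\mu\rangle$ for $\mu$ obtained by removing a box, then check that the ratio of consecutive terms is exactly the factor $\frac{v^{-1}s^{c(\Box)} - v s^{-c(\Box)}}{s^{h(\Box)} - s^{-h(\Box)}}$ contributed by the added box. Either route is routine once the dictionary is in place; since the paper only needs the lowest-degree terms, one could even extract just the leading behavior, but I would state the full formula as in \cite{lukac-morton}.

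The main obstacle is the bookkeeping in step (2)--(3): correctly normalizing the skein relations (the paper warns that its sign conventions differ from the literature), tracking the unknot value $\frac{v^{-1}-v}{s-s^{-1}}$ through the recursion, and making sure the content-shift convention ($c(\Box) = (\text{column}) - (\text{row})$) is consistent with the sign of the framing/writhe conventions used elsewhere. In other words, the mathematical content is the classical hook-content formula, and the real work is pinning down the precise specialization $p_k \mapsto \frac{v^{-k}-v^{k}}{s^k-s^{-k}}$ so that the exponents of $v$ and $s$ and all signs come out matching Proposition \ref{unknotformula} exactly; everything after that is a citation of standard symmetric function theory. Since this is precisely the computation carried out in \cite{lukac-morton}, I would present the proof as a recollection of their argument, emphasizing the identification of the trace with the quantum-dimension specialization and the appeal to the hook-content theorem.
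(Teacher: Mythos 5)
The paper does not actually prove this statement: Proposition \ref{unknotformula} is imported verbatim as Equation (12) of \cite{lukac-morton}, so there is no internal argument to compare against. Your reconstruction is essentially the standard Aiston--Morton/Lukac--Morton argument and is sound in outline: the evaluation $\langle\,\cdot\,\rangle:\calC_+\to\Lambda$ is a ring homomorphism (nested annular diagrams become split links in the plane, and the planar skein evaluation is multiplicative on split unions), so it is determined by its values on the free generators $Q_{(m)}\leftrightarrow h_m$, which one computes by a skein recursion; the result identifies the trace with the specialization $p_k\mapsto \frac{v^{-k}-v^{k}}{s^{k}-s^{-k}}$, and the hook--content evaluation of $s_\lambda$ at that specialization gives the stated product over boxes.

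One genuine imprecision to fix: the specialization $p_k\mapsto \frac{v^{-k}-v^{k}}{s^{k}-s^{-k}}$ is \emph{not} equivalent to the single-alphabet substitution $x_i\mapsto v^{-1}s^{1-2i}$. That substitution gives $p_k\mapsto \frac{v^{-k}}{s^{k}-s^{-k}}$, whose hook--content evaluation is only $\prod_{\Box}\frac{v^{-1}s^{c(\Box)}}{s^{h(\Box)}-s^{-h(\Box)}}$ --- precisely the lowest $v$-degree part that the paper later isolates as $\langle Q_\lambda\rangle^{\low}$ in Section \ref{lowdegree}, not the full formula. Correspondingly, the generating function in your step (3) should be $\sum_m \langle Q_{(m)}\rangle z^m=\prod_{i\ge 1}\frac{1-v\,s^{1-2i}z}{1-v^{-1}s^{1-2i}z}$, i.e. the trace is the principal specialization of a \emph{difference} of two alphabets, $p_k\mapsto \sum_i (v^{-1}s^{1-2i})^k-\sum_i (v s^{1-2i})^k$. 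The content/hook evaluation for this two-parameter specialization is still classical (it is the standard ``quantum dimension'' identity, reducing to $U_q(\mathfrak{sl}_N)$ dimensions at $v=s^{-N}$), so with that correction your argument closes; but as literally written, steps (2)--(3) are internally inconsistent with the target formula and would only reproduce its leading term.
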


For the colored Hopf link, we can proceed as follows. Given $X \in \calC_+$, we define the meridian operator $\sM_{X}$ 
on $\calC_{+}$ 
whose value $\sM_{X}(Y)$ on $Y \in \calC_{+}$ is defined by the satellite construction shown in figure $3$.
\begin{figure*}[htp]
\centering
\includegraphics[scale=0.50]{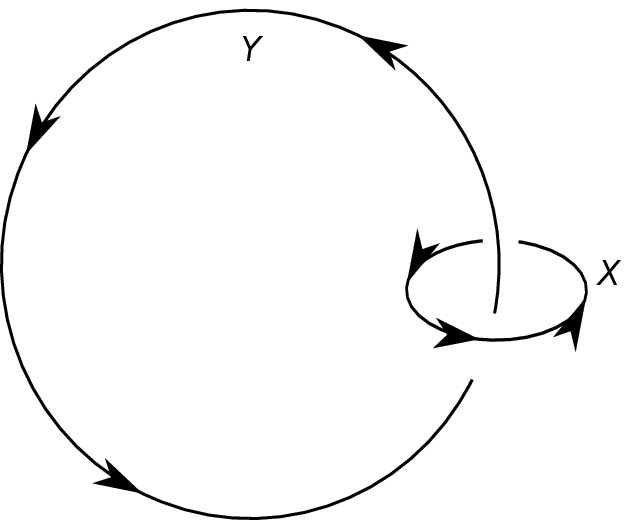}
\caption{$\sM_{X}(Y)$}
\end{figure*}

It is shown in \cite{lukac-morton} that $Q_{\mu}$ is an eigenvector for the meridian operator, i.e.
$$\sM_{X}(Q_\mu) = t_{\mu}(X) Q_{\mu}$$
and that $t_{\mu}$ is a ring homomorphism from $\calC_{+}$ to $\Lambda$.
In these terms, the HOMFLY polynomial for the Hopf link, colored by $\lambda$ and $\mu$ is given by 
$$t_{\mu}(Q_{\lambda}) \langle Q_{\mu} \rangle = t_{\lambda}(Q_{\mu}) \langle Q_{\lambda} \rangle.$$

Although we only need the lowest degree part, we state here the full formula for $t_{\mu}$ given in Lukac-Morton.

Given a partition $\mu$ and a power series $$E(t) = \sum E_{k} t^{k}$$ with coefficients in $\Lambda$,
we define
$$s_{\lambda}(E(t))\in \Lambda$$
to be the element in $\Lambda$ obtained by writing the Schur function $s_{\lambda}(z)$ as a polynomial in elementary symmetric functions $e_{k}(z)$ via the Jacobi-Trudy identity
and making the substitution $e_{k}(z) = E_{k}$.

Take the generating function 
$$E_{\mu}(t) = \prod_{j=1}^{\ell(\mu)} \frac{1 + v^{-1}s^{2\mu_{j}-2j+1}t}{1+ v^{-1}s^{-2j+1}t} \prod_{i \geq 0} \frac{1+vs^{2i+1}t}{1 + v^{-1}s^{2i+1}t}.$$

\begin{prop}[\cite{lukac-morton}, Theorem $4.4$]\label{hopfformula}
We have
$$t_{\mu}(Q_{\lambda}) = s_{\lambda}(E_{\mu}(t)).$$
\end{prop}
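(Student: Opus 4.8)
Since this is Theorem~$4.4$ of Lukac--Morton, in the paper it will simply be cited, but the route I would take to prove it from scratch is the following. The key structural input, already recorded above, is that $t_\mu$ is a $\Lambda$-algebra homomorphism $\calC_+ \to \Lambda$. After Turaev's identification, $\calC_+$ is the ring of symmetric functions over $\Lambda$, hence a polynomial algebra on the elementary symmetric functions $e_1, e_2, \dots$, which in the annulus are the column idempotent closures $e_k = Q_{(1^k)}$. By the very definition of the symbol $s_\lambda(E_\mu(t))$ --- expand $s_\lambda$ as a polynomial in the $e_k$ via the (dual) Jacobi--Trudi identity and then substitute $e_k \mapsto E_k$ --- the homomorphism property lets me pull $t_\mu$ inside this polynomial, so that
\[
t_\mu(Q_\lambda) = t_\mu(s_\lambda) = s_\lambda\Big(\textstyle\sum_k t_\mu(e_k)\, t^k\Big).
\]
Thus the entire content collapses to the single identity of generating functions $\sum_{k \geq 0} t_\mu(e_k)\, t^k = E_\mu(t)$, i.e.\ to computing the meridian eigenvalues $t_\mu(e_k)$ on the column classes.

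To compute $t_\mu(e_k)$ I would interpret the meridian operator $\sM_{Q_{(1^k)}}$ representation-theoretically: encircling a strand colored by the idempotent $e_\mu$ with a loop colored by $Q_{(1^k)}$ is the partial (quantum) trace over the encircling loop of the double braiding on $V_{(1^k)} \otimes V_\mu$. Since $V_\mu$ is irreducible this is a scalar, namely $t_\mu(e_k)$. The double braiding acts on each summand $V_\nu \subset V_{(1^k)} \otimes V_\mu$ by the ratio of ribbon twists attached to $\nu$, $(1^k)$ and $\mu$, and these twists are exactly the framing eigenvalues recorded above, $\tau(Q_\rho) = v^{-|\rho|}s^{\kappa_\rho}Q_\rho$. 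Pieri's rule decomposes $V_{(1^k)}\otimes V_\mu$ as a sum of $V_\nu$ with $\nu$ obtained from $\mu$ by adding a vertical $k$-strip, so summing the resulting contributions against $t^k$ and over $k$ telescopes row by row into a ratio of products: the parts of $\mu$ contribute the finite factors $\prod_{j}\frac{1+v^{-1}s^{2\mu_j-2j+1}t}{1+v^{-1}s^{-2j+1}t}$, while the ambient family of empty rows below $\mu$ contributes the vacuum product $\prod_{i\geq 0}\frac{1+vs^{2i+1}t}{1+v^{-1}s^{2i+1}t}$. Together these assemble into precisely $E_\mu(t)$.

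The main obstacle is control of the infinite vacuum product. The HOMFLY skein behaves like the $N\to\infty$ limit of $U_q(\mathfrak{sl}(N))$, so a naive finite-$N$ weight decomposition must be carried out compatibly in $N$ and then passed to the limit, and the convergence of $\prod_{i\geq 0}(1+vs^{2i+1}t)/(1+v^{-1}s^{2i+1}t)$ as a power series in $t$ over $\Lambda$ must be justified. In the skein-theoretic language this is cleanest to handle by working directly with the idempotents $e_\mu$ and the explicit action of the elementary braids rather than by invoking finite $N$, which sidesteps the limit at the cost of careful manipulation of turnback and $R$-matrix relations. Finally, two independent checks pin the answer down and catch sign and shift errors: at $\mu = \emptyset$ the formula must reduce to $t_\emptyset(Q_\lambda) = \langle Q_\lambda \rangle$, so that $E_\emptyset(t)=\prod_{i\geq 0}\frac{1+vs^{2i+1}t}{1+v^{-1}s^{2i+1}t}$ must be the generating series of the column unknot values $\langle Q_{(1^k)}\rangle$ from Proposition~\ref{unknotformula}; and the Hopf-link symmetry $t_\mu(Q_\lambda)\langle Q_\mu\rangle = t_\lambda(Q_\mu)\langle Q_\lambda\rangle$ recorded above must hold identically, which is a stringent constraint on the resulting $E_\mu(t)$.
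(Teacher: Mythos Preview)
You are correct that the paper does not prove this proposition: it is stated with attribution to Lukac--Morton \cite{lukac-morton} and no argument is given. So there is nothing in the paper to compare your sketch against beyond the citation itself.

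That said, your outline is sound and tracks the actual Lukac--Morton argument closely at the structural level: the homomorphism property of $t_\mu$ reduces everything to the single computation of the generating series $\sum_k t_\mu(Q_{(1^k)})\,t^k$, and the content is then the evaluation of the meridian eigenvalue on these column (or, dually, row) classes. Where you diverge from the source is in method for that evaluation. You invoke the ribbon-category picture (double braiding, Pieri decomposition, ratios of twist eigenvalues), whereas Lukac--Morton work entirely inside the HOMFLY skein: they manipulate explicit skein elements $X_j, Y_j \in \calC_+$ and use the Murphy-operator description of $\calH_n$ to diagonalise the meridian map directly, never passing through a finite-$N$ quantum group. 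Their route sidesteps exactly the obstacle you flag---the $N\to\infty$ limit and the infinite vacuum product---because the skein computation produces $E_\mu(t)$ as a formal identity in $\Lambda[[t]]$ without any limiting procedure. Your approach is more conceptual but would require the extra care you describe; theirs is more hands-on but self-contained.
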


\subsection{Lowest degree term}\label{lowdegree}

In this section, we take the formulas from the last section and extract the terms of lowest degree in $v$.

For the colored unknot, it follows from Proposition \ref{unknotformula}
that
$$\langle Q_{\lambda} \rangle = v^{-|\lambda|} \prod_{\Box \in \lambda} \frac{s^{c(\Box)}}{s^{h(\Box)} - s^{-h(\Box)}} + v \sO(v)$$
where $\sO(v)$ denotes elements of $\Lambda$ with no poles at $v = 0$.
We set 
$$\langle Q_{\lambda} \rangle^{\low} =  \prod_{\Box \in \lambda} \frac{s^{c(\Box)}}{s^{h(\Box)} - s^{-h(\Box)}}$$ to be the coefficient 
of $v^{-|\lambda}$.  
After the substitution $q = s^2$, we can rewrite this coefficient in terms of Schur functions via the evaluation
\begin{equation*}
s_{\lambda}(q^{\rho})
\end{equation*}
where $q^{\rho}$ denotes the substitution $z_i = - i + 1/2$ for $i = 1, 2, \dots$.

If we expand it at $s = 0$, we have
$$\langle Q_\lambda \rangle^{\low} = (-1)^{|\lambda|} s^{h(\lambda) + \frac{1}{2}\kappa_{\lambda}} + s\sO(s)$$
where $\sO(s)$ denotes a rational function in $s$ with no poles at $s=0$.


More generally,
given an element $$X = \sum_{\gamma\sideperp m} c_{\gamma}(v,s) Q_{\gamma}\in \calC_{m},$$
let $$A = \operatorname{min}_{\gamma} \mathrm{ord}_{v=0} c_{\gamma}(v,s)$$
be the degree of the lowest monomial in $v$ that occurs in the coefficients $c_{\gamma}(v,s)$.
We define
$$\langle X \rangle^{\low} = v^{m - A} \langle X \rangle|_{v = 0}.$$
By construction, we have
$$\langle X \rangle = v^{A - m} \left(\langle X \rangle^{\low} + v\cdot \mathsf{O}(v)\right)$$
where $\sO(v)$ again denotes terms regular at $v=0$.
Note that if $\langle X \rangle^{\low}\ne 0$ then
it is the term of lowest $v$-degree in $\langle X \rangle$, although
it could vanish in general.

For meridian operators,
we extract the lowest degree contribution from $t_{\mu}$:
$$t_{\mu}(Q_{\lambda}) = v^{-|\lambda|} t_{\mu}^{\low}(Q_{\lambda}) + v\sO(v).$$
From Proposition \ref{hopfformula}, we can deduce the following formula, again via Schur function evaluations.
After the substitution $q = s^2$, we have
\begin{equation}\label{hopflowdegree}
t_{\mu}^{\low}(Q_{\lambda}) = s_{\lambda}( q^{\mu + \rho})
\end{equation}
where $q^{\mu + \rho}$ denotes the substitution $z_i = q^{\mu^{(i)} - i + 1/2}$ for $ i = 1, 2, \dots$.
To see this, use the standard involution $\omega$ on symmetric functions which exchanges elementary and complete symmetric functions.

By expanding at $q=s=0$, we obtain the expansion around $s=0$ of the lowest $v$-degree term:
\begin{align}\label{hopflowestorder}
\langle \sM_{\mu}(Q_{\lambda}) \rangle^{\low} &= 
t_{\mu}^{\low}(Q_{\lambda}) \langle Q_{\mu} \rangle^{\low}\\
&= (-1)^{|\lambda| + |\mu|} s^{-2(\lambda^{\tr},\mu^{\tr}) +h(\lambda) + \frac{1}{2}\kappa_{\lambda}+ h(\mu) + \frac{1}{2}\kappa_{\mu}}\left(1 + s\sO(s)\right)\,.\nonumber
\end{align}


Suppose we have elements 
\begin{equation}\label{purevform}
X = v^{A} \sum_{\mu \sideperp m} c_{\mu}(s) Q_{\mu}, \quad Y = v^{B} \sum_{\nu \sideperp n} c_{\nu}(s)Q_{\nu}.
\end{equation}
where every coefficient is a monomial in $v$ of the same degree.
By Corollary $1.2$ of \cite{lukac-morton}, and taking the lowest order terms, we have that
$$\langle \sM_{\lambda}(XY) \rangle^{\low}\langle Q_{\lambda} \rangle^{\low} = \langle \sM_{\lambda}(X) \rangle^{\low} \langle \sM_{\lambda}(Y) \rangle^{\low}\,.$$
More generally, 
this yields the following lemma given $X_1, \dots, X_r$ of the form \eqref{purevform}:
\begin{lem}\label{productrule}
$$
\langle \sM_{\lambda}(\prod_{i=1}^{r} X_i) \rangle^{\low} \langle Q_{\lambda} \rangle^{\low}= \left(\langle Q_{\lambda}\rangle^{\low}\right)^{2-r}\prod_{i=1}^{r}\langle \sM_{\lambda}(X_i) \rangle^{\low}.$$
\end{lem}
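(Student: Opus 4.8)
The plan is to prove the identity by a straightforward induction on $r$, bootstrapping from the two-factor identity
$$\langle \sM_{\lambda}(XY) \rangle^{\low}\langle Q_{\lambda} \rangle^{\low} = \langle \sM_{\lambda}(X) \rangle^{\low} \langle \sM_{\lambda}(Y) \rangle^{\low}$$
recorded just above (a consequence of Corollary $1.2$ of \cite{lukac-morton}). The case $r=1$ is the tautology $\langle\sM_{\lambda}(X_1)\rangle^{\low}\langle Q_{\lambda}\rangle^{\low} = (\langle Q_{\lambda}\rangle^{\low})^{1}\langle\sM_{\lambda}(X_1)\rangle^{\low}$, and the case $r=2$ is exactly the quoted identity. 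Before running the induction I would record that a product of several elements of the form \eqref{purevform} is again of that form: if $X_i = v^{A_i}\sum_{\mu\sideperp m_i} c_{\mu}(s)Q_{\mu}$, then the product $X_1\cdots X_{r-1}$ computed in $\calC_+$ lands in $\calC_{m_1+\cdots+m_{r-1}}$ (the closure map is grading-preserving and the product on $\calC_+$ is multiplication of symmetric functions, by Turaev), and since the Schur-basis expansion of $s_{\mu_1}\cdots s_{\mu_{r-1}}$ has $\ZZ$-coefficients (Littlewood--Richardson), the powers of $v$ simply add, giving $v^{A_1+\cdots+A_{r-1}}\sum_{\nu} d_\nu(s)Q_\nu$ with $d_\nu(s)\in\ZZ[s^{\pm1}]$. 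In particular $\prod_{i=1}^{r-1}X_i$ satisfies the hypothesis of the two-factor identity.

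For the inductive step, assume the claim holds for $r-1$ factors. Applying the two-factor identity with $X=\prod_{i=1}^{r-1}X_i$ and $Y=X_r$ gives
$$\langle \sM_{\lambda}(\prod_{i=1}^{r} X_i)\rangle^{\low}\,\langle Q_{\lambda}\rangle^{\low} = \langle \sM_{\lambda}(\prod_{i=1}^{r-1} X_i)\rangle^{\low}\,\langle \sM_{\lambda}(X_r)\rangle^{\low}.$$
Multiplying both sides by $\langle Q_{\lambda}\rangle^{\low}$ and substituting the inductive hypothesis in the form $\langle\sM_{\lambda}(\prod_{i=1}^{r-1}X_i)\rangle^{\low}\langle Q_{\lambda}\rangle^{\low} = (\langle Q_{\lambda}\rangle^{\low})^{3-r}\prod_{i=1}^{r-1}\langle\sM_{\lambda}(X_i)\rangle^{\low}$ yields
$$\langle \sM_{\lambda}(\prod_{i=1}^{r} X_i)\rangle^{\low}\,(\langle Q_{\lambda}\rangle^{\low})^{2} = (\langle Q_{\lambda}\rangle^{\low})^{3-r}\prod_{i=1}^{r}\langle\sM_{\lambda}(X_i)\rangle^{\low}.$$
Cancelling one factor of $\langle Q_{\lambda}\rangle^{\low}$ gives the desired formula, since $3-r-1 = 2-r$. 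This cancellation is legitimate because, by Section \ref{lowdegree}, $\langle Q_{\lambda}\rangle^{\low} = \prod_{\Box\in\lambda}\frac{s^{c(\Box)}}{s^{h(\Box)}-s^{-h(\Box)}}$ is a unit in $\ZZ[s^{\pm1},(s^{k}-s^{-k})^{-1},k\ge1]$, hence in particular a non-zero-divisor; alternatively one can carry out the entire computation in the fraction field.

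The only substantive ingredient is the two-factor identity imported from \cite{lukac-morton}; everything else is formal bookkeeping. The mildest point requiring care is tracking the exponents of $\langle Q_{\lambda}\rangle^{\low}$ through the induction, together with the observation above that the intermediate products $\prod_{i=1}^{r-1}X_i$ remain of the homogeneous form \eqref{purevform} so that the two-factor identity genuinely applies at each stage. I do not anticipate any real obstacle.
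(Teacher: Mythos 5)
Your proof is correct and follows essentially the same route as the paper, which derives the lemma by iterating the two-factor identity from Corollary $1.2$ of \cite{lukac-morton} (the paper simply states "more generally, this yields the following lemma" without spelling out the induction). Your added checks — that the intermediate products remain of the homogeneous form \eqref{purevform} and that $\langle Q_{\lambda}\rangle^{\low}$ is a non-zero-divisor so the cancellation is legitimate — are exactly the right points to verify.
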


\section{Algebraic links} 

In this section, we recall from \cite{eisenbud-neumann}, \cite{neumann-kahler}, and \cite{ctcwall},
the presentation of algebraic links in terms of the constructions from the last section.  We also discuss the
lowest degree behavior of their HOMFLY polynomials, which will be the most important feature for our application, as well
as how they change with respect to blowup.

\subsection{Description of links}\label{linkdescription}

Given a plane curve $$C = \{f(x,y)= 0\} \subset \CC^2$$
with singularity at $p = (0,0)$,
the algebraic link $\cL_{C,p}$
can be described via an iterated satellite construction using the diagrams listed above.
A more detailed discussion can be found in Appendix A of \cite{eisenbud-neumann} 
as well as the first few sections of \cite{neumann-kahler}.  

We first discuss the case where the germ $\widehat{C}$ of the curve singularity at $p$ is irreducible, in which case
$\cL_{C,p}$ is a knot.  
We first assume that $f(x,y) \ne x$, and by solving for $y$ as a function of $x$ via Newton's method, we obtain a Puiseux series
expansion for $y(x)$.  That is, we have a Puiseux series 
$$y(x) = x^{\frac{q_{0}}{p_{0}}}(a_0 + x^{\frac{q_{1}}{p_{0}p_{1}}}(a_1 + x^{\frac{q_{2}}{p_{0}p_{1}p_{2}}}(a_2+ \dots)))$$
such that
$$f\left(x,y(x)\right) = 0.$$
Here, each Newton pair $(p_i, q_i)$ is a pair of relatively prime positive integers for which $p_k = 1$ eventually, and $a_i$ is nonzero.  The topology of $\cL_{C,p}$ only depends on the singularity type of $\widehat{C}$ so is unchanged if we truncate $y(x)$ after some finite number $s$ of iterations.  In our notation,
the Newton pairs depend on the choice of coordinates (and truncation).

In these terms, $\cL_{C,p}$ is an iterated torus knot, with parameters given by the Newton pairs $(p_i,q_i)$.  More precisely, $\cL_{C,p}$ can be embedded as a diagram $L_{C}$ in the annulus so that
\begin{equation}\label{iteratedtorus}
L_{C} = T_{p_{0}}^{q_{0}}*( T_{p_{1}}^{q_{1}}*(\dots(T_{p_{s}}^{q_{s}})\dots).
\end{equation}
In the degenerate case $y(x) = 0$,
we set $L_C = T_{1}^0$, i.e. the closure of a single unknotted strand.

\begin{remark}
We make a few remarks about this formula.  First, we use the framing inherited from the annulus, in which a torus knot is equipped with the framing coming from the surface of the torus; this differs from the cabling convention in \cite{oblomkov-shende}, for instance.  This convention is explained in the references (see, e.g. footnote 1 in \cite{neumann-kahler}).
Second, in some of the references (e.g. \cite{ctcwall}), it is convenient to assume $q_0 \geq p_0$, but this formula is valid without that assumption.  Finally,
while the topology of the link $\cL_C$ only depends on the singularity, the annulus diagram $L_C$ we construct will depend on the choice of coordinates.
\end{remark}

In the reducible case, let $\widehat{C}_1, \dots, \widehat{C}_{r}$ denote the branches of $\widehat{C}$;
for each $1\leq i \leq r$,
we have an associated Puiseux expansion as above:
\begin{equation}\label{puiseuxbranch}
y_{i}(x) = x^{\frac{q^{(i)}}{p^{(i)}}}\left(a_{i} + z_{i}(x^{\frac{1}{p^{(i)}}})\right)
\end{equation}
where we only write out the first step of the iterated expansion.
We again assume for now that $x=0$ is not a branch.
The link component for this branch is constructed as above; to explain how these components are linked, we use the 
splice diagram $S_{m}^{n}$ from Section \ref{torussplice}.
As before, we can assume each Puiseux series is finite by truncating after a sufficiently large number of steps, without affecting the topology of the link (see, e.g., Theorem $1.1$ in \cite{neumann-kahler}), and we can construct the link inductively.

For each index $i$, we associate the pair 
$$\left(\alpha_i = \frac{q^{(i)}}{p^{(i)}}, a_{i}\right)\in \QQ_{>0}\times \CC.$$
In the case of $y(x) = 0$, we associate the pair $(\infty, 0)$
For each pair $(\alpha, a)$, 
let 
$$\{i_{0}, \dots, i_{n}\}$$
denote the set of indices $i$ which have $(\alpha, a)$ as its associated pair.
If we consider the set of Puiseux series
$$\{z_{i_{0}}(x), \dots, z_{i_{n}}(x)\},$$
we inductively assume we have defined an annulus diagram
$$L_{(\alpha, a)}.$$

For each $\alpha \in \QQ_{> 0} \cup \infty$, we set
$$L_{\alpha} = \prod_{a} L_{(\alpha,a)}$$
where $\prod$ denotes concatenation of annuli.  The order of the product does not matter up to isotopy.

After relabelling, we can assume that $\alpha_1 < \alpha_2 \dots < \alpha_{k}$ and that every $\alpha_i$ occurs in this initial sequence.

The link $\cL_C$ can be represented by the annulus diagram $L_C$ where
\begin{equation}\label{algebraiclinkformula}
L_C = S_{p^{(1)}}^{q^{(1)}}*\left(L_{\alpha_{1}},S_{p^{(2)}}^{q^{(2)}}*(L_{\alpha_{2}}, \dots, 
S_{p^{(k-1)}}^{q^{(k-1)}}*(L_{\alpha_{k-1}},T_{p^{(k)}}^{q^{(k)}}*(L_{\alpha_{k}})))\dots\right).
\end{equation}
If $\alpha_k = \infty$, then we have the same expression, but with 
$T_{p^{(k)}}^{q^{(k)}}*(L_{\alpha_{k}})$ replaced with $L_{\alpha_{k}}$.

Finally, if we add the branch $x=0$ to $\widehat{C}$, 
then the new link is obtained from $L_C$ by adding a meridian around the annulus, as in figure $3$.

\subsection{Lowest degree terms}\label{algebraiclowdegree}

Let $(C,p)$ be as in the last section, and we add the condition that $C$ does not contain $x=0$ as a branch.  
Fix truncated Puiseux series for the branches
$C_{1},\dots, C_{r}$ at $p$; let $L_C$ be the corresponding annulus diagram determined above.
Fix partitions $\arrmu = (\mu_1, \dots, \mu_r)$ and consider the
satellite diagram $L_C*(Q_{\arrmu}).$
If we take its class in $\calC_+$ and expand in the basis $Q_{\nu}$, we will calculate the lowest order term in $v$ and $s$ as follows.

We first recall some lemmas regarding symmetric functions.
Given two partitions $\mu$ and $\nu$, let $\mu \cup \nu$ denote the partition obtained by concatenating parts,
e.g. $(3,2) \cup (1,1) = (3,2,1,1)$.  Similarly, given a natural number $k$, $\mu^{\cup k}$ denotes the partition obtained by concatenating $\mu$ with itself $k$ times.
If we take either the plethysm of a Schur function or the product of two Schur functions, then if we expand the result in Schur functions, the following lemma describes the lowest nonzero term that occurs with respect to the dominance ordering $>$ on partitions

\begin{lem}\label{schurexpansion}   
$$s_{\mu}[p_k](z) = \pm s_{\mu^{\cup k}}(z) + \sum_{\rho > \mu^{\cup k}} c_{\rho} s_{\rho}(z)$$
and
$$s_{\mu}(z)\cdot s_{\nu}(z) = s_{\mu \cup \nu}(z) + \sum_{\rho > \mu \cup \nu} \mathrm{LR}^{\rho}_{\mu,\nu} s_{\rho}(z),$$
where $\mathrm{LR}$ denotes Littlewood-Richardson coefficients.
\end{lem}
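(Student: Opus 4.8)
The plan is to prove both identities by a direct comparison of Jacobi--Trudi determinants, combined with the transition matrix between power-sum (or elementary) and Schur bases, reading off the dominance-minimal term.

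First I would treat the product formula, which is the cleaner of the two. By the Littlewood--Richardson rule, $s_\mu s_\nu = \sum_\rho \mathrm{LR}^\rho_{\mu,\nu}\, s_\rho$, where $\rho$ ranges over partitions of $|\mu|+|\nu|$. The key combinatorial fact is that the smallest $\rho$ in dominance order for which $\mathrm{LR}^\rho_{\mu,\nu}\ne 0$ is precisely $\mu\cup\nu$ (the partition obtained by interleaving all parts of $\mu$ and $\nu$ in weakly decreasing order), and that $\mathrm{LR}^{\mu\cup\nu}_{\mu,\nu} = 1$. To see minimality, recall that $\mathrm{LR}^\rho_{\mu,\nu}\ne 0$ forces both $\rho\supseteq\mu$ and $\rho\supseteq\nu$ as Young diagrams and $\rho\geq\mu$, $\rho\geq\nu$ in dominance; since $\mu\cup\nu$ is the dominance-infimum among partitions of $|\mu|+|\nu|$ that dominate both $\mu$ and $\nu$, any $\rho$ with nonzero coefficient satisfies $\rho\geq\mu\cup\nu$. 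For the coefficient being exactly $1$: the unique LR skew tableau of shape $(\mu\cup\nu)/\mu$ and content $\nu$ is forced, since the rows of $(\mu\cup\nu)/\mu$ that come from $\nu$ are horizontal strips placed so that the lattice-word condition leaves no freedom. Alternatively I can phrase this via the stable principal specialization or via the Pieri rule applied iteratively to the parts of $\nu$; iterating Pieri one row at a time always produces $\mu\cup\nu$ with multiplicity one as the lowest term and everything else strictly higher in dominance.

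Next, the plethysm formula $s_\mu[p_k]$. Here I would use the expansion $p_k = \sum_{\lambda\vdash k} \chi^?$\dots more usefully, the identity $s_\mu[p_k] = \sum_{\rho}\, (\text{sign})\, s_\rho$ with $\rho$ running over partitions of $k|\mu|$ whose $k$-core is empty and whose $k$-quotient is built from $\mu$; this is the classical result (going back to Littlewood) that plethysm by $p_k$ is, up to sign, the inverse of the $k$-quotient map on those partitions with empty $k$-core. Among all such $\rho$, the dominance-minimal one is $\mu^{\cup k}$ (concatenating $k$ copies of $\mu$, i.e. each part of $\mu$ repeated $k$ times and sorted), which indeed has empty $k$-core and $k$-quotient $(\mu,\mu,\dots,\mu)$, and it occurs with coefficient $\pm 1$. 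The sign is $(-1)^{(k-1)|\mu| + \text{something}}$ but the statement only claims $\pm$, so I will not need to pin it down. To argue minimality I would either invoke the explicit description of the $k$-quotient bijection on beta-numbers (sorting the interleaved beta-sets of the quotient parts produces the lexicographically/dominance-smallest admissible $\rho$), or reduce to the product case by the identity $s_\mu[p_k] = s_\mu[h_1[p_k]]$ expanded through $p_k = \sum \pm s_{(a,1^{k-a})}$ and repeated application of the product lemma, tracking that the lowest term compounds to $\mu^{\cup k}$.

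The main obstacle I anticipate is the plethysm case: unlike the product, there is no one-line combinatorial rule, and I must be careful that the dominance-minimal term is genuinely $\mu^{\cup k}$ and not some other partition of $k|\mu|$ with empty $k$-core. The cleanest route is probably the $k$-quotient/$k$-core dictionary on first-column hook lengths (beta-numbers): writing $\mu$ via a beta-set $B$, the partitions $\rho$ appearing in $s_\mu[p_k]$ correspond bijectively to choices of how to distribute $kB + \{0,1,\dots,k-1\}$-shifted copies, and the minimal $\rho$ is obtained by the "balanced" distribution, which one checks directly yields $\mu^{\cup k}$ with multiplicity one. I would verify this on the beta-set level and then translate back. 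The product lemma, by contrast, should follow in a paragraph from iterated Pieri.
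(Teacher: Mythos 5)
The paper offers no proof of this lemma at all --- it is recalled as a standard fact about symmetric functions (it is essentially Macdonald, Ch.~I) --- so your argument has to stand on its own, and as written it has two genuine gaps. For the product formula, the load-bearing claim that ``$\mu\cup\nu$ is the dominance-infimum among partitions of $|\mu|+|\nu|$ that dominate both $\mu$ and $\nu$'' is false. Take $\mu=\nu=(2)$ and $\rho=(2,1,1)$: then $\rho$ contains and dominates both copies of $(2)$ (partial sums $2,3,4$ against $2,2,2$), yet $\rho$ does not dominate $\mu\cup\nu=(2,2)$ because $2+1<2+2$. So the implication ``$\mathrm{LR}^{\rho}_{\mu,\nu}\neq 0\Rightarrow\rho\supseteq\mu,\ \rho\supseteq\nu\Rightarrow\rho\geq\mu\cup\nu$'' breaks at the second arrow, and your minimality argument does not go through (note $(2,1,1)$ is in fact absent from $s_{(2)}s_{(2)}$, but nothing you wrote rules it out). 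The standard repair is to prove the conjugate statement first: in an LR tableau of shape $\rho/\mu$ and content $\nu$ every entry $j$ lies in a row of index at least $j$, which gives $\rho\leq\mu+\nu$ in dominance with the extreme attained by a unique tableau; then use $\mathrm{LR}^{\rho}_{\mu,\nu}=\mathrm{LR}^{\rho^{\tr}}_{\mu^{\tr},\nu^{\tr}}$, the fact that transposition reverses dominance, and $(\mu^{\tr}+\nu^{\tr})^{\tr}=\mu\cup\nu$. (Equivalently: the monomial expansion of $s_{\mu}s_{\nu}$ is supported on $m_{\rho}$ with $\rho\leq\mu+\nu$ and has top coefficient $1$, and unitriangularity of the Kostka matrix transfers this to the Schur expansion.)

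For the plethysm, your description of $\mu^{\cup k}$ as having empty $k$-core and $k$-quotient $(\mu,\dots,\mu)$ cannot be correct: if $\lambda$ has empty $k$-core and $k$-quotient $(\lambda^{(0)},\dots,\lambda^{(k-1)})$ then $|\lambda|=k\sum_i|\lambda^{(i)}|$, so the quotient of $\mu^{\cup k}$ has total size $|\mu|$, not $k|\mu|$ (for instance the $2$-quotient of $(1,1)=(1)^{\cup 2}$ is $((1),\emptyset)$ up to convention). Moreover the SXP/Littlewood coefficients are $\pm\,\mathrm{LR}^{\mu}_{\lambda^{(0)},\dots,\lambda^{(k-1)}}$ rather than automatically $\pm 1$, so even granting the rule you would still need to compute the actual quotient of $\mu^{\cup k}$, verify the corresponding LR coefficient is $1$, and establish dominance-minimality among admissible $\lambda$ --- none of which is carried out, as you partly acknowledge. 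A much shorter route avoids cores and quotients entirely: $s_{\mu}[p_k](z)=s_{\mu}(z_1^{k},z_2^{k},\dots)$ has monomial support $z^{k\alpha}$ with $\alpha^{+}\leq\mu$ in dominance and top coefficient $1$, so by the same Kostka triangularity $s_{\mu}[p_k]=s_{k\mu}+(\text{dominance-lower Schur terms})$, where $k\mu$ multiplies each part by $k$; applying $\omega$, together with $\omega(p_k)=(-1)^{k-1}p_k$ (so $\omega(s_{\mu}[p_k])=\pm s_{\mu^{\tr}}[p_k]$) and $(k\mu^{\tr})^{\tr}=\mu^{\cup k}$, converts this highest-term statement into exactly the lowest-term expansion claimed in the lemma, with the sign $\pm 1$.
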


This yields the following corollary.  In what follows, fix partitions $\mu$ and $\nu$ as well
as relatively prime integers $m,n$. 
Let $\lambda = \mu^{\cup m}\cup \nu$.

\begin{cor}
$$[S_{m}^{n}*(Q_{\mu}, Q_{\nu})] = v^{-n|\mu|} s^{\frac{n}{m}(\kappa_{\lambda} - \kappa_{\nu})}\left(\pm Q_{\lambda} + \sum_{\rho> \lambda} c_{\rho}(s) Q_{\rho}\right)$$
where $c_{\rho}(s)$ has no poles at $s=0$.
\end{cor}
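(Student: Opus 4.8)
The plan is to reduce everything to Lemma~\ref{spliceformula} and then read off the lowest term by combining the Schur-function estimates of Lemma~\ref{schurexpansion} with the known eigenvalues of the framing operator $\tau$. After relabelling the variables, Lemma~\ref{spliceformula} gives
$$S_{m}^{n}*(Q_{\mu},Q_{\nu}) = v^{\frac{n}{m}|\nu|}\, s^{-\frac{n}{m}\kappa_{\nu}}\, \tau^{n/m}\bigl(Q_{\mu}[p_{m}]\cdot Q_{\nu}\bigr),$$
so the whole problem becomes: expand $Q_{\mu}[p_{m}]\cdot Q_{\nu}$ in the basis $\{Q_{\rho}\}$, then apply $\tau^{n/m}$, which acts diagonally with eigenvalue $v^{-\frac{n}{m}|\rho|}s^{\frac{n}{m}\kappa_{\rho}}$ on $Q_{\rho}$ by \cite{aiston-morton}.

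For the expansion I would work under the Turaev identification of $\calC_{+}$ with symmetric functions, under which $Q_{\rho}\leftrightarrow s_{\rho}$, $Q_{\mu}[p_{m}]\leftrightarrow s_{\mu}[p_{m}]$, and the product of annulus classes becomes the product of symmetric functions. By the first part of Lemma~\ref{schurexpansion}, $Q_{\mu}[p_{m}] = \pm Q_{\mu^{\cup m}} + \sum_{\sigma>\mu^{\cup m}}(\text{integer})\,Q_{\sigma}$; by the second part, multiplying any $Q_{\sigma}$ by $Q_{\nu}$ produces only terms $Q_{\rho}$ with $\rho$ weakly above $\sigma\cup\nu$ in the dominance order, again with integer coefficients. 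The one combinatorial input here is that concatenation with the fixed partition $\nu$ preserves dominance; I would deduce this by transposing, using $(\sigma\cup\nu)^{\tr}=\sigma^{\tr}+\nu^{\tr}$ together with the facts that adding a fixed partition preserves dominance and that transposition reverses it. Since concatenation is injective, $\sigma>\mu^{\cup m}$ then forces $\sigma\cup\nu>\lambda$ strictly, with $\lambda=\mu^{\cup m}\cup\nu$; hence the only term at the bottom of the dominance order is $\pm Q_{\lambda}$ (with the same sign as in Lemma~\ref{schurexpansion}), and everything above it has integer coefficient: $Q_{\mu}[p_{m}]\cdot Q_{\nu}=\pm Q_{\lambda}+\sum_{\rho>\lambda}d_{\rho}Q_{\rho}$, $d_{\rho}\in\ZZ$.

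Applying $\tau^{n/m}$ is then bookkeeping. Every $\rho$ occurring has $|\rho|=|\lambda|=m|\mu|+|\nu|$, so all terms pick up the same power of $v$, and multiplying by the prefactor $v^{\frac{n}{m}|\nu|}$ collapses it to $v^{-n|\mu|}$; pulling out $s^{\frac{n}{m}\kappa_{\lambda}}$ and combining with $s^{-\frac{n}{m}\kappa_{\nu}}$ yields the asserted front factor $s^{\frac{n}{m}(\kappa_{\lambda}-\kappa_{\nu})}$ and leaves $c_{\rho}(s)=d_{\rho}\,s^{\frac{n}{m}(\kappa_{\rho}-\kappa_{\lambda})}$. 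The only step that is not pure bookkeeping, and which I expect to be the main obstacle, is the regularity claim $c_{\rho}(s)\in\Lambda$ with no pole at $s=0$, i.e.\ $\kappa_{\rho}\geq\kappa_{\lambda}$ whenever $\rho>\lambda$ in dominance. For this I would use the identity $\kappa_{\lambda}=2\bigl(N(\lambda^{\tr})-N(\lambda)\bigr)$ with $N(\lambda)=\sum_{i}(i-1)\lambda^{(i)}$: moving a box up along a covering relation of the dominance order strictly decreases $N$, so $\rho>\lambda$ gives $N(\rho)\leq N(\lambda)$ and, after transposing, $N(\rho^{\tr})\geq N(\lambda^{\tr})$; adding these two inequalities gives $\kappa_{\rho}\geq\kappa_{\lambda}$. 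Since $m,n>0$, the exponent $\tfrac{n}{m}(\kappa_{\rho}-\kappa_{\lambda})$ is $\geq 0$, which is exactly the regularity asserted.
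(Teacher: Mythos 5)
Your proof is correct and follows the same route the paper intends: the corollary is obtained by substituting Lemma \ref{spliceformula}, expanding $Q_{\mu}[p_m]\cdot Q_{\nu}$ via Lemma \ref{schurexpansion}, and applying the $\tau$-eigenvalues, with the $v$- and $s$-exponents collapsing exactly as you compute. You have also correctly supplied the one detail the paper leaves implicit, namely that $\kappa_{\rho}\geq\kappa_{\lambda}$ for $\rho>\lambda$ in dominance order, which is what makes $c_{\rho}(s)$ regular at $s=0$.
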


By iterating this relation,
we recover a similar statement for the general case using the following lemma.

\begin{lem}
The expression 
$$\kappa_{\lambda \cup \mu} - \kappa_{\mu}$$
is weakly increasing for fixed $\lambda$ as $\mu$ increases with respect to the dominance ordering, and 
similarly for fixed $\mu$ and $\lambda$ increases.
\end{lem}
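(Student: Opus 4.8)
The plan is to reduce the statement to a single closed-form identity for $\kappa_{\lambda\cup\mu}$ together with a standard majorization (Schur‑concavity) fact. Write $\|\nu\|^{2} = \sum_{j}(\nu^{(j)})^{2}$ and recall the classical identity $\kappa_{\nu} = 2\sum_{\Box\in\nu}c(\Box) = \|\nu\|^{2} - \|\nu^{\tr}\|^{2}$. The key observation is that under $\cup$ the columns add, $(\lambda\cup\mu)^{\tr} = \lambda^{\tr} + \mu^{\tr}$ as sequences, while the multiset of rows of $\lambda\cup\mu$ is the disjoint union of those of $\lambda$ and $\mu$; so $\|\lambda\cup\mu\|^{2} = \|\lambda\|^{2}+\|\mu\|^{2}$ and $\|(\lambda\cup\mu)^{\tr}\|^{2} = \|\lambda^{\tr}\|^{2} + \|\mu^{\tr}\|^{2} + 2\sum_{i}(\lambda^{\tr})^{(i)}(\mu^{\tr})^{(i)}$. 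Rewriting the cross term column by column as $\sum_{i}(\lambda^{\tr})^{(i)}(\mu^{\tr})^{(i)} = \sum_{r,s}\min(\lambda^{(r)},\mu^{(s)})$, I obtain
$$\kappa_{\lambda\cup\mu} = \kappa_{\lambda} + \kappa_{\mu} - 2\sum_{r,s}\min(\lambda^{(r)},\mu^{(s)}),\qquad\text{so}\qquad \kappa_{\lambda\cup\mu} - \kappa_{\mu} = \kappa_{\lambda} - 2\sum_{r,s}\min(\lambda^{(r)},\mu^{(s)}).$$

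For the first assertion $\lambda$ is fixed, so $\kappa_\lambda$ is constant and it suffices to show that $\sum_{r,s}\min(\lambda^{(r)},\mu^{(s)})$ is weakly decreasing as $\mu$ grows in the dominance order. For each fixed $r$, the map $\mu\mapsto\sum_{s}\min(\lambda^{(r)},\mu^{(s)})$ is a sum over the parts of $\mu$ of the concave function $t\mapsto\min(\lambda^{(r)},t)$ (a minimum of two affine functions), and such a sum is Schur‑concave. Concretely, it suffices to treat covering relations of the dominance order, where $\mu$ is obtained from the larger partition by moving a single box to a later row: this changes the sum by $g(b)-g(a)$ with $a\ge b$ and $g(k)=\min(\lambda^{(r)},k+1)-\min(\lambda^{(r)},k)$ non‑increasing, hence by a nonnegative amount — that is, the sum weakly decreases going up. Summing over $r$ settles this case.

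For the second assertion $\mu$ is fixed and $\lambda$ grows, so both terms on the right vary, but they vary in the same direction. By the same Schur‑concavity argument — now with each $\mu^{(s)}$ playing the role of the fixed bound — $\sum_{r,s}\min(\lambda^{(r)},\mu^{(s)})$ weakly decreases as $\lambda$ increases, hence $-2\sum_{r,s}\min(\lambda^{(r)},\mu^{(s)})$ weakly increases; and $\kappa_{\lambda}$ itself weakly increases, since $\|\lambda\|^{2}$ is Schur‑convex (convexity of $t^{2}$) while the order‑reversal $\lambda\ge\tilde\lambda \iff \lambda^{\tr}\le\tilde\lambda^{\tr}$ makes $-\|\lambda^{\tr}\|^{2}$ weakly increasing as well. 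Adding the two monotone pieces gives the claim. I do not anticipate a genuine obstacle: the content is the transpose identity above plus the elementary majorization lemma, and the only point needing care is verifying, in the second case, that $\kappa_{\lambda}$ and $-2\sum_{r,s}\min(\lambda^{(r)},\mu^{(s)})$ are monotone in the same direction, which the formula $\kappa_\lambda=\|\lambda\|^2-\|\lambda^{\tr}\|^2$ makes transparent.
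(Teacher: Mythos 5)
Your proof is correct and rests on exactly the same identity as the paper's: your $\kappa_{\lambda\cup\mu}-\kappa_{\mu}=\kappa_{\lambda}-2\sum_{r,s}\min(\lambda^{(r)},\mu^{(s)})$ is the paper's formula $\kappa_{\lambda\cup\mu}-\kappa_{\mu}=\kappa_{\lambda}-2(\lambda^{\tr},\mu^{\tr})$, since $\sum_{r,s}\min(\lambda^{(r)},\mu^{(s)})=\sum_{i}(\lambda^{\tr})^{(i)}(\mu^{\tr})^{(i)}$. You simply spell out the majorization/Schur-concavity details that the paper leaves implicit.
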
 
\begin{proof}
This follows from the formula
$$\kappa_{\lambda \cup \mu} - \kappa_{\mu} = \kappa_{\lambda} - 2 (\lambda^{\tr}, \mu^{\tr}).$$
\end{proof}

This lemma implies that as we iterate the satellite construction with the diagram $S_{m}^{n}$, and look at
the lowest order of vanishing at $s=0$ for nonzero coefficients of $Q_{\gamma}$, this bound is always achieved
on the partition obtained by concatenating parts.

 Let $m_i$ denote the number of strands in the annulus diagram for each connected component of $L_C$; equivalently, this is the intersection multiplicity of $\widehat{C_{i}}$ with the curve $x = 0$, since both are the linking number of the knot with the meridian.
Let
$$\gamma = \mu_{1}^{\cup m_{1}} \cup \dots \mu_{r}^{\cup m_{r}}$$
denote the concatenation of $\arrmu$ with multiplicities $m_i$.

\begin{prop}\label{lowdegreealgebraic}
There exist exponents $A$ and $B$ and a choice of sign such that
$$[L_{C}*(Q_{\mu_{1}}, \dots, Q_{\mu_{r}})]
= v^{A}s^{B}\left(\pm Q_{\gamma} + \sum_{\gamma' > \gamma} c_{\gamma'}(s) Q_{\gamma'}\right)$$
where $c_{\gamma'}(s)$ has no poles at $s=0$.
\end{prop}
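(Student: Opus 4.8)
The plan is to run the iterated satellite description of $L_C$ from equation \eqref{algebraiclinkformula} by induction on the splice structure, tracking at each stage only the leading term with respect to the dominance order and the leading power of $v$ and $s$. The base case is a single branch: by the iterated-torus description \eqref{iteratedtorus} and Lemma \ref{torusformula}, $L_{C_i}*(Q_{\mu_i})$ is obtained by iterated plethysms $Q_{\mu_i}[p_{p_0}][p_{p_1}]\cdots$ followed by applications of the framing operator $\tau$. Each plethysm $s_\mu[p_k]$ has dominance-leading term $\pm s_{\mu^{\cup k}}$ by Lemma \ref{schurexpansion}, so iterating the Newton pairs for branch $i$ produces dominance-leading term $\pm Q_{\mu_i^{\cup m_i}}$ up to a monomial in $v,s$, where $m_i = \prod_j p^{(i)}_j$ is exactly the intersection multiplicity with $x=0$. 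The $v$-degree is tracked through $\tau(Q_\lambda) = v^{-|\lambda|}s^{\kappa_\lambda}Q_\lambda$ and the invariance of $|\lambda|$ under plethysm; the claim that the $s$-order of the non-leading coefficients $c_{\gamma'}(s)$ stays regular at $s=0$ follows from the second displayed Lemma (the monotonicity of $\kappa_{\lambda\cup\mu}-\kappa_\mu$), which guarantees that moving up in dominance only increases the $s$-exponent.

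For the inductive step, I would peel off the outermost splice $S_{p^{(1)}}^{q^{(1)}}$ in \eqref{algebraiclinkformula}. Write the inner argument as $Y$, which by induction (and by first grouping branches with equal $(\alpha,a)$ via the concatenation product $L_\alpha = \prod_a L_{(\alpha,a)}$, using the second half of Lemma \ref{schurexpansion} to see the product of Schur functions has dominance-leading term the concatenation) has the stated form $Y = v^{A'}s^{B'}(\pm Q_{\gamma'} + \sum_{\rho>\gamma'}c_\rho(s)Q_\rho)$. Then apply the Corollary following Lemma \ref{schurexpansion}: $S_m^n*(Q_\mu, Q_\nu)$ has dominance-leading term $\pm v^{-n|\mu|}s^{\frac{n}{m}(\kappa_\lambda-\kappa_\nu)}Q_\lambda$ with $\lambda = \mu^{\cup m}\cup\nu$, and by linearity of the satellite construction in its $\calC_+$ arguments, the leading term of $S_{p^{(1)}}^{q^{(1)}}*(L_{\alpha_1}, Y)$ comes from pairing the leading term of $L_{\alpha_1}$ with the leading term of $Y$; the monotonicity lemma again ensures the non-leading contributions only push up in $s$-order. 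Concatenating over all $\alpha_i$ and tracking the cumulative multiplicities assembles the partition $\gamma = \mu_1^{\cup m_1}\cup\cdots\cup\mu_r^{\cup m_r}$, absorbing all the accumulated prefactors into the single monomial $v^As^B$. The degenerate cases ($y_i(x)=0$, or $\alpha_k=\infty$) are handled by replacing the relevant $T$-factor with the identity diagram, which does not affect the leading-term bookkeeping.

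The main obstacle I expect is not any single algebraic identity but the \emph{uniformity} of the $s$-regularity claim through the iteration: one must be sure that at every splice the coefficients $c_{\gamma'}(s)$ of the strictly-higher partitions remain regular at $s=0$, even though the monomial prefactors $s^{\frac{n}{m}(\kappa_\lambda-\kappa_\nu)}$ carry fractional exponents and could in principle introduce poles. This is exactly what the two symmetric-function lemmas (the dominance-leading-term lemma and the monotonicity of $\kappa_{\lambda\cup\mu}-\kappa_\mu$) are designed to control, so the real work is a careful induction verifying that the product $S_m^n*(-,-)$ of two "leading-term-plus-higher" expressions is again of that shape — i.e. that the set of partitions appearing is bounded below in dominance by the concatenation, and the extra $s$-powers incurred are nonnegative on that set. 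Once this closure property is nailed down, the statement follows by induction on the number of splice nodes in \eqref{algebraiclinkformula}, with $A$, $B$, and the sign recorded as the cumulative totals of the monomial shifts appearing at each step.
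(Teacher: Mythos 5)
Your proposal follows the paper's own argument: iterate the splice/torus satellite description of $L_C$, use Lemma \ref{schurexpansion} (through the Corollary for $S_{m}^{n}*(Q_\mu,Q_\nu)$) to identify the dominance-leading term as the concatenation partition, and invoke the monotonicity of $\kappa_{\lambda\cup\mu}-\kappa_\mu$ to guarantee that the extracted monomial in $s$ is the minimal one, so the remaining coefficients stay regular at $s=0$. The closure property you flag as the main obstacle is precisely the content of the paper's remark following that monotonicity lemma, so your write-up is a correct (and somewhat more explicit) rendering of the same proof.
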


We can calculate the exponents $A$ and $B$ recursively in terms of the Puiseux series.  We will only do so for $A$.  If
$C$ has a single branch with Newton pairs $\{(p_0, q_0), \dots, (p_s,q_s)\}$, then
$$A = -|\lambda|\left(q_{s} + q_{s-1}p_{s} + q_{s-2}p_{s-1}p_{s} + \dots q_{0}p_{1}\dots p_{s}\right).$$
Similarly, if we have 
multiple components, we take the above expression for each component and add them.

If $x=0$ is a component of $C$ decorated with partition $\lambda$, we can calculate $L_C$ from Proposition \ref{lowdegreealgebraic} by applying the 
meridian operator $\sM_{\lambda}$.  Note that its coefficients in the basis $Q_{\mu}$ are no longer homogeneous with respect to the variable $v$.

\subsection{Blowup analysis}\label{blowupanalysis}

In this section, we study how the link $\cL_C$ behaves with respect to blowing
up at $p \in C$.  
We fix notation as in Section \ref{blowupsetup}, so we have irreducible components $C_1, \dots, C_r$ each of which has a unique branch through $p$ by assumption.
Let $C'_1, \dots, C'_r$ denote the proper transform of each of these components; by assumption, each $C'_i$ meets the exceptional divisor
$E$ at a unique point in $C' \cap E = \{p_1, \dots, p_e\}$.  For $k= 1, \dots, e$, let $B_k$ denote the planar singularity of $C'$ at $p_k$ let $D_k$ denote the planar singularity of $C' \cup E$ at $p_k$.
We wish to relate the algebraic links associated to $C$, $B_k$ and $D_k$.

By appropriate choice of coordinates $(x,y)$, we will assume that $\{x=0\}$ and $\{y=0\}$ are not branches.
Choose truncated Puiseux expansions for each $C_i$ at $p$, as in equation \eqref{puiseuxbranch}:
$$y_{i}(x) = x^{\frac{q^{(i)}}{p^{(i)}}}(a_{i} + z_{i}(x^{\frac{1}{p^{(i)}}})).$$
By a further choice of coordinates, we can assume that 
$$\alpha_i = \frac{q^{(i)}}{p^{(i)}}\geq 1,\quad\textup{ for }i = 1, \dots, r.$$
If we blow up at the point $(0,0)$, we work on the affine chart $U$ with coordinates $(x,w)$ of the blowup corresponding to the 
map
$$(x,w) \mapsto (x, y = xw).$$
On $U$, the divisor $E$ is given by the equation $x= 0$.
By the condition on $\alpha_i$, we know that $C' \cap E \subset U$.

We first suppose that $\alpha_i > 1$ for all $i$.
For each $i$, we can calculate the Puiseux expansion for $C'_i$ by
making the substitution $y = xw$ in the Puiseux expansion for $C_i$.  The result is
$$w_{i}(x) = x^{\frac{q^{(i)}-p^{(i)}}{p^{(i)}}}(a_{i} + z_{i}(x^{\frac{1}{p^{(i)}}})).$$
In particular, $C'_i \cap E = (0,0)$ for all $i$ and we can study the link $L_{C'}$ of the singularity of $C'$ at $(0,0)$

For each component, we see that only the first Newton pair for the series is changed, while the rest remain the same.  Therefore, 
using equation \eqref{iteratedtorus}, we have the relation between annulus diagrams
$$[L_{C_{i}}] = \tau[L_{C'_{i}}] \in \calC_+$$
for the link components.
To understand the full link $L_C$ in terms of $L_{C'}$, we use equation \eqref{algebraiclinkformula}.
Using either a direct geometric isotopy, or
 the framing formula
$$S^{q}_{p}(X, \tau Y) = \tau S^{q-p}_{p}(X, Y)$$
 for the splice link $S_{p}^{q}$, we see
 that 
 \begin{equation}\label{preliminaryknotblowup}
 [L_{C}] = \tau [L_{C'}]
 \end{equation}
with the same statement if we decorate $L_C$ with a partition $\mu$.

If there are $i$ such that $\alpha_i = 1$, we proceed as follows.
For each $i$ such that $\alpha_i = 1$ (so $p^{(i)} = q^{(i)}= 1$),
the branch for $C'_i$ is given by the series expansion
$$w_i(x) = a_{i} + z_{i}(x^{\frac{1}{p^{(i)}}}) = a_{i} + z_{i}(x).$$
In particular, 
$$C'_i \cap E = (0, a_i) \in U$$ 
is determined by the leading coefficient $a_i$.

There is an intersection point $(0,a) \in C' \cap E$ for each $a\ne 0$ such that the pair $(\alpha = 1,a)$ is associated to a branch, plus the origin
$(0,0) \in C' \cap E$ if any $\alpha_i > 1$. 

If we change to coordinates based at $p_k = (0,a_k)$, via the coordinate change $x' = x, y' = y - a_k$, then the link for the singularity $B_k$ of $C'$ at $p_k$ is given by the formula
$$[L_{B_{k}}] = [L_{(1, a_{k})}]$$
where $[L_{(1,a_{k})}]$ is the skein element defined inductively using $z_{i}(x)$ in Section \ref{linkdescription}.
If we combine this with equation \eqref{algebraiclinkformula} and our earlier analysis \eqref{preliminaryknotblowup} of the branches with $\alpha > 1$, this yields the following answer for the general case.
In what follows, we assume we have labelled $\{p_1, \dots, p_e\}$ so that $p_e = (0,0)$ if any $\alpha_i > 1$.

\begin{prop}\label{blowupforknots}
The element $[L_C]\in \calC_+$ is given by the formula
$$[L_{C}] = S^{1}_{1}*\left(L_{B_{1}}\cdot \dots \cdot L_{B_{e-1}}, \tau L_{B_{e}}\right)$$
if any $\alpha_i > 1$; otherwise it is given by
$$[L_{C}] = S^{1}_{1}*\left(L_{B_{1}}\cdot \dots \cdot L_{B_{e}}, \emptyset\right) = T^{1}_{1}*\left(L_{B_{1}}\cdot \dots \cdot L_{B_{e}}\right).$$
The analogous statement holds if the irreducible components of $B$ are decorated by partitions $\mu_1, \dots, \mu_r$.
\end{prop}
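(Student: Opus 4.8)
The plan is to read both formulas off directly from the iterated satellite presentation \eqref{algebraiclinkformula} of $L_C$, feeding in the two ingredients already assembled in this section: the pointwise identification $[L_{B_k}] = [L_{(1,a_k)}]$ at each $p_k \neq (0,0)$, and the identity \eqref{preliminaryknotblowup}, namely $[L_C]=\tau[L_{C'}]$, valid when every branch has $\alpha_i>1$. Throughout we keep the coordinates and chart $(x,w)\mapsto(x,y=xw)$ fixed in Section \ref{blowupanalysis}, so that the branches of $C$ with $\alpha_i=1$ have proper transforms meeting $E$ at the points $(0,a_i)$ with $a_i\ne0$, while the branches with $\alpha_i>1$ have proper transforms all passing through the origin $(0,0)$.

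The main case is when some $\alpha_i>1$ \emph{and} some $\alpha_i=1$. After the relabelling of Section \ref{linkdescription} the smallest exponent is $\alpha_1=1$, so $(p^{(1)},q^{(1)})=(1,1)$ and the outermost operation in \eqref{algebraiclinkformula} is $S^1_1\ast(L_{\alpha_1},R)$, where the tail $R=S^{q^{(2)}}_{p^{(2)}}\ast(L_{\alpha_2},\dots,T^{q^{(k)}}_{p^{(k)}}\ast(L_{\alpha_k}))$ is by inspection exactly the diagram that \eqref{algebraiclinkformula} assigns to the sub-singularity $C^{>1}$ spanned by the branches with $\alpha_i>1$. Now $L_{\alpha_1}=\prod_a L_{(1,a)}$, where the $a$'s index precisely the points of $C'\cap E$ other than the origin; by $[L_{B_k}]=[L_{(1,a_k)}]$ and the convention $p_e=(0,0)$ this gives $[L_{\alpha_1}]=[L_{B_1}]\cdots[L_{B_{e-1}}]$. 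For the tail, every branch of $C^{>1}$ has $\alpha_i>1$, so \eqref{preliminaryknotblowup} applies and $[R]=[L_{C^{>1}}]=\tau[L_{(C^{>1})'}]$; moreover $(C^{>1})'$ at the origin is the singularity $B_e$ of $C'$ at $p_e$, since the branches through $(0,0)$ are exactly those with $\alpha_i>1$. Substituting yields $[L_C]=S^1_1\ast(L_{B_1}\cdots L_{B_{e-1}},\tau L_{B_e})$, and the decorated statement follows verbatim, as \eqref{algebraiclinkformula}, $[L_{B_k}]=[L_{(1,a_k)}]$ and \eqref{preliminaryknotblowup} all hold with partition labels present.

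The remaining variants are immediate. If some $\alpha_i>1$ but no $\alpha_i=1$, then $C'\cap E=\{(0,0)\}$, so $e=1$ and \eqref{preliminaryknotblowup} alone gives $[L_C]=\tau[L_{B_1}]=S^1_1\ast(\emptyset,\tau L_{B_1})$, using $S^1_1\ast(\emptyset,Y)=\tau(Y)$ (the $\lambda=\emptyset$ specialization of Lemma \ref{spliceformula}); this is the stated formula with an empty first product. If every $\alpha_i=1$, then $\alpha=1$ is the only exponent, so $k=1$ in \eqref{algebraiclinkformula} and $L_C=T^1_1\ast(L_1)=T^1_1\ast(\prod_a L_{(1,a)})=T^1_1\ast(L_{B_1}\cdots L_{B_e})$, which is $S^1_1\ast(L_{B_1}\cdots L_{B_e},\emptyset)$ by definition of the splice with trivial extra strand.

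I expect the only genuine subtlety to be the bookkeeping in the main case: identifying the tail $R$ of \eqref{algebraiclinkformula} for $C$ with the full link diagram of $C^{>1}$, and checking that after the blowup the branches with $\alpha_i>1$ are exactly the ones whose proper transforms meet $E$ at the origin, so that $B_e$ really is $(C^{>1})'$ near $p_e$. This is precisely where the normalization $\alpha_i\geq1$ and the explicit chart from Section \ref{blowupanalysis} get used; everything else is a direct substitution of the already-established identities.
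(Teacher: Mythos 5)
Your argument is correct and is essentially the paper's own: Proposition \ref{blowupforknots} is obtained exactly by assembling \eqref{algebraiclinkformula}, the identification $[L_{B_k}]=[L_{(1,a_k)}]$, and \eqref{preliminaryknotblowup} applied to the sub-curve of branches with $\alpha_i>1$, together with the splice framing identity; your bookkeeping of the tail $R$ as $L_{C^{>1}}$ and of $B_e$ as $(C^{>1})'$ at the origin is precisely the content the paper leaves implicit. One small slip: the parenthetical identity $S^1_1*(\emptyset,Y)=\tau(Y)$ is wrong. By Lemma \ref{spliceformula} with $m=n=1$ and $\lambda=\emptyset$, the prefactor $v^{|\mu|}s^{-\kappa_{\mu}}$ cancels the eigenvalue of $\tau$, so $S^1_1*(\emptyset,Y)=Y$ (an undecorated companion contributes nothing), whereas it is the other slot that carries the twist, $S^1_1*(X,\emptyset)=\tau(X)=T^1_1*(X)$. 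Taken literally your identity would give $S^1_1*(\emptyset,\tau L_{B_1})=\tau^2 L_{B_1}$ and contradict $[L_C]=\tau[L_{B_1}]$; with the correct identity your displayed equation in that case is exactly right, so nothing else in the proof is affected.
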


Finally, since the exceptional divisor is given by the equation $x=0$, if we add it to $B_k$, decorated with partition $\lambda$, we have the following.
\begin{prop}\label{meridianblowup}
$$[L_{D_{k}}] = \sM_{\lambda}(L_{B_{k}}).$$
The analogous statement holds if $B_{k}$ is decorated by partitions $\arrmu[k]$.
\end{prop}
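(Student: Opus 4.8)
The plan is to deduce this from the general principle, recorded at the end of Section~\ref{linkdescription}, that adjoining the branch $\{x=0\}$ to a plane curve germ that does not already contain it corresponds, on the level of annulus diagrams, to wrapping a meridian around the annulus; decorating the adjoined branch by a partition $\lambda$ then corresponds to decorating that meridian by $Q_\lambda$, i.e. to applying the meridian operator $\sM_\lambda = \sM_{Q_\lambda}$ of Section~\ref{unknothopfcalc}. Granting this, the only thing left is to check that, in the coordinates used in Section~\ref{blowupanalysis}, the exceptional divisor $E$ plays exactly the role of the axis $\{x=0\}$ relative to the annulus presentation of $B_k$, and then to read off the conclusion.

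First I would localize at $p_k$. On the blowup chart $U$ with coordinates $(x,w)$ and $y = xw$ we have $E = \{x=0\}$, and the translation $x'=x$, $w'=w-a_k$ centering at $p_k$ (with $a_k=0$ permitted, namely for the point $p_e$ collecting the branches with $\alpha_i>1$) leaves $E$ cut out by $\{x'=0\}$. By construction $L_{B_k}$ --- and, with the components of $B_k$ decorated by $\arrmu[k]$, the decorated diagram $L_{B_k}*(Q_{\arrmu[k]})$ --- is precisely the annulus diagram obtained from the truncated Puiseux series of the branches of $C'$ through $p_k$ by the iterated torus/splice recipe of Section~\ref{linkdescription}, built relative to the axis $\{x'=0\}$; here one uses that $\{x'=0\}$ is not a branch of $B_k$, which holds because $C'$ is the proper transform of $C$ and hence does not contain $E$. (One should also record that the coordinate choices of Section~\ref{blowupanalysis} --- $\{x=0\}$ and $\{y=0\}$ not branches of $C$, and $\alpha_i\ge 1$ --- together with a further translation if necessary, make the Puiseux recipe literally applicable at $p_k$, e.g. that $\{w'=0\}$ is not a branch either.) Since $\widehat{D_k}$ is exactly $\widehat{B_k}$ with the single extra branch $\{x'=0\}=E$ adjoined and decorated by $\lambda$, the principle above gives $[L_{D_k}] = \sM_\lambda(L_{B_k})$, and likewise $[L_{D_k}*(Q_{\arrmu[k]},Q_\lambda)] = \sM_\lambda(L_{B_k}*(Q_{\arrmu[k]}))$ in the decorated case, because $\sM_\lambda$ is defined on all of $\calC_+$ and is compatible with the satellite operation.

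I expect the only point that really needs attention to be the compatibility of framings: the operator $\sM_X$ is defined using the blackboard framing of the annulus, so one must confirm that adjoining $E$ produces exactly $\sM_{Q_\lambda}$ with no stray power of the framing operator $\tau$. This is already built into the conventions of Section~\ref{linkdescription} (the analogous assertion is made there for $L_C$ and the original axis) and is consistent with the framing identity $S^q_p(X,\tau Y)=\tau S^{q-p}_p(X,Y)$ used in~\eqref{preliminaryknotblowup}; concretely it amounts to the fact that the Seifert framing of the meridian of $\{x'=0\}$ agrees with the blackboard framing it carries as the core circle of the annulus, so that the geometric isotopy identifying the link of $C'\cup E$ near $p_k$ with the satellite defining $\sM_{Q_\lambda}$ introduces no spurious twist. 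Everything else is bookkeeping.
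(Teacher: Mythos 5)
Your proposal is correct and follows essentially the same route as the paper, which justifies the proposition in one line by observing that the exceptional divisor is $\{x=0\}$ in the blowup chart and invoking the principle from Section~\ref{linkdescription} that adjoining the branch $\{x=0\}$ corresponds to adding a meridian (i.e.\ applying $\sM_\lambda$). Your additional checks --- that the translation centering at $p_k$ preserves $E=\{x'=0\}$, that $E$ is not a branch of $B_k$, and that no stray framing twist is introduced --- are exactly the bookkeeping the paper leaves implicit.
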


\section{Blowup identity for links}

We prove a parallel version of Proposition \ref{realblowupidentity} for links.  
In order to prove Theorem \ref{mainthm2}, it will ultimately suffice to prove compatibility of these two identities, which we do in the next section.
The argument here is largely independent of the preceding sections (other than
definitions of section \ref{skeinrecap} and the calculations of section \ref{unknothopfcalc}).

\subsection{Vertex flop}

We first write down the identity for the colored unknot.
In this case, Proposition \ref{unknotformula} gives us a closed formula for its HOMFLY polynomial in terms of Schur functions.

One approach to write down the blowup identity is to use the topological vertex to equate this Schur function expression with
the corresponding stable pairs generating series; Proposition \ref{realblowupidentity} then becomes a symmetric function identity which can be written as a blowup formula for the colored unknot.  However, we can skip most of these steps by using the paper \cite{minabe-flop} where they prove the symmetric function identity directly, with the goal of understanding the flop invariance of the topological vertex.

Let 
\begin{align*}
Z_{\mu}(q,Q) &= s_{\mu}(q^{\rho}) \prod_{\Box \in \mu} (1 + Q q^{-c(\Box)})\\
&= q^{\kappa_{\mu}/4} \prod_{\Box \in \mu} \frac{1 + Q q^{-c(\Box)}}{q^{h(\Box)/2} - q^{-h(\Box)/2}}.
\end{align*}
The following identity is a special case of Theorem $2.7$ of \cite{minabe-flop}, with $\lambda_{1} = \mu$ and the other partitions $\emptyset$:
\begin{equation}\label{vertexflop}
Q^{|\mu|} Z_{\mu}(q,Q^{-1}) = \frac{1}{\prod_{k \geq 1} (1 + Q q^{k})^{k}}
\sum_{\lambda} Q^{{|\lambda|}} q^{-\frac{1}{2}(\kappa_{\mu}+\kappa_{\lambda})} s_{\mu}(q^{\rho})s_{\lambda}(q^{\mu + \rho})s_{\lambda}(q^{\rho})\,.
\end{equation}

\subsection{General formula}

Using Proposition \ref{unknotformula}, we see that
\begin{equation*}
Z_{\mu}(q= s^{2},Q= - v^{2}) = v^{|\mu|} \langle Q_{\mu} \rangle
\end{equation*}
after the change of variables (which again is a standard consequence of the relation between the topological vertex and colored Chern-Simons invariants of the unknot).
Similarly, the Schur function evaluations on the right-hand side of \eqref{vertexflop} are precisely the lowest-degree invariants for the Hopf link and the unknot from Section \ref{lowdegree}.
The monomial shift $q^{-\kappa_{\lambda}/2}$ is, after ignoring the $v$-monomial, the eigenvalue of $\tau^{-1}$, the inverse of the framing shift operator.

Therefore, we can rewrite the vertex flop identity as
\begin{equation}\label{knotvertexflop}
(-v)^{|\mu|} \langle Q_{\mu} \rangle |_{v = v^{-1}} = \frac{1}{Z_{Y}(s^{2},-v^{2})} \sum_{\lambda} (-v^{2})^{|\lambda|}
s^{-\kappa_{\lambda}} \langle Q_{\lambda} \rangle^{\low} \langle \sM_{\lambda} \tau^{-1} Q_{\mu} \rangle^{\low}\,.
\end{equation}

Given an element $X \in \calC_{m}$ of the form \eqref{purevform}
$$X = v^{A} \sum_{\gamma \sideperp m} c_{\gamma}(s) Q_{\gamma} \in \calC_{m},$$
the trace $\langle \sM_{\lambda} \tau^{-1} X \rangle^{\low}$
is linear with respect to coefficients $c_{\gamma}(s)$, so we can extend equation \eqref{knotvertexflop} to all $X$ of this form.

\begin{prop}\label{knotblowupidentity}
Given $X \in \calC_{m}$ of the form \eqref{purevform}, we have
\begin{align*}
(-1)^{m}v^{m + A}  \langle X \rangle |_{v = v^{-1}} = \frac{1}{Z_{Y}(s^{2},-v^{2})} \sum_{\lambda} (-v^{2})^{|\lambda|}
s^{-\kappa_{\lambda}} \langle Q_{\lambda} \rangle^{\low} \langle \sM_{\lambda}\tau^{-1} X \rangle^{\low}\,.
\end{align*}
\end{prop}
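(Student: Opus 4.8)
The plan is to deduce Proposition \ref{knotblowupidentity} from the special case recorded in equation \eqref{knotvertexflop} by a straightforward linearity argument, so the real content is already in the vertex flop identity \eqref{vertexflop} of \cite{minabe-flop} together with the dictionary between Schur evaluations and skein traces established in Section \ref{lowdegree}. First I would observe that equation \eqref{knotvertexflop} is precisely the statement of Proposition \ref{knotblowupidentity} in the case $X = Q_\mu$ (so $A = 0$, $m = |\mu|$), up to the sign $(-1)^m$ which for $X = Q_\mu$ is $(-1)^{|\mu|}$ and is already present on the left-hand side of \eqref{knotvertexflop} inside $(-v)^{|\mu|}$. So it suffices to check that both sides of the asserted identity are $\Lambda$-linear in the coefficients $c_\gamma(s)$ of $X$ when $X$ ranges over elements of the form \eqref{purevform}, and then invoke the single-partition case for each basis vector $Q_\gamma \in \calC_m$.

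For the left-hand side, linearity is immediate: $\langle - \rangle$ is a $\Lambda$-linear trace on $\calC_+$, the specialization $v \mapsto v^{-1}$ is a ring map, and the prefactor $(-1)^m v^{m+A}$ is a fixed scalar once $m$ and $A$ are fixed; writing $X = v^A \sum_\gamma c_\gamma(s) Q_\gamma$ and expanding gives $(-1)^m v^{m+A}\langle X\rangle|_{v=v^{-1}} = (-1)^m v^{m}\sum_\gamma c_\gamma(s^{-1})\, v^{-A}\langle Q_\gamma\rangle|_{v=v^{-1}}\cdot v^{A}$, where one must be mild careful that the $v^A$ gets inverted along with the coefficients but is then cancelled by the external $v^{m+A}$ — this is exactly the bookkeeping that makes the normalization in \eqref{purevform} natural. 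For the right-hand side, the point already emphasized in the paragraph preceding the proposition is that $\langle \sM_\lambda \tau^{-1} X\rangle^{\low}$ depends $\Lambda$-linearly on the coefficients $c_\gamma(s)$: indeed $\tau^{-1}$ is a linear operator on $\calC_m$ with eigenbasis $Q_\gamma$, $\sM_\lambda$ is linear, $\langle - \rangle$ is linear, and the ${}^{\low}$ operation on an element of $\calC_m$ of homogeneous $v$-degree (which $\sM_\lambda\tau^{-1}X$ is, since $X$ has homogeneous $v$-degree $A$ and both $\tau^{-1}$ and $\sM_\lambda$ act by monomials in $v$ on the $Q_\gamma$-basis after the lowest-order truncation of Section \ref{lowdegree}) is just the coefficient extraction $v^{m'-A'}(\,\cdot\,)|_{v=0}$, which is linear. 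Summing the single-partition identity \eqref{knotvertexflop} against $c_\gamma(s)$ over all $\gamma \sideperp m$ then yields the claim, with the overall $v^A$-shift entering uniformly on both sides.

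The one genuine subtlety — and the step I expect to be the main obstacle — is verifying that the exponents of $v$ really do match \emph{before} summing, i.e. that $\langle Q_\gamma\rangle|_{v=v^{-1}}$ and $\langle \sM_\lambda \tau^{-1} Q_\gamma\rangle^{\low}$ sit in the $v$-degree predicted by the $\gamma = \mu$ instance for \emph{every} $\gamma \sideperp m$, not just for the specific $\mu$. This is where the homogeneity assumption in \eqref{purevform} is doing work: it guarantees that $X$ is a $v$-homogeneous element so that the low-degree truncations behave functorially, and it is what lets one factor out $v^A$ cleanly. I would make this precise by recording that for $\gamma\sideperp m$ one has $\langle Q_\gamma\rangle = v^{-m}(\langle Q_\gamma\rangle^{\low} + v\,\sO(v))$ from Section \ref{lowdegree}, so $v^m\langle Q_\gamma\rangle|_{v=v^{-1}} = \langle Q_\gamma\rangle^{\low}|_{s\mapsto s^{-1}} + v^{-1}\sO(v^{-1})$, matching the structure on the right, and then the identity of lowest-degree parts is exactly \eqref{knotvertexflop}. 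The higher-order-in-$v$ terms are not controlled by this argument, but they are not asserted by the proposition (its two sides are genuinely equal only as the lowest-order truncations appearing there), so no further work is needed. After these observations the proof is a one-line appeal to linearity and the $\gamma$-by-$\gamma$ case.
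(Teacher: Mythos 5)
Your proposal is correct and is essentially the paper's own argument: the paper derives equation \eqref{knotvertexflop} as the $X=Q_\mu$ case and then extends to all $X$ of the form \eqref{purevform} by exactly the linearity-in-$c_\gamma(s)$ observation you make, with the homogeneity of the $v$-degree guaranteeing that $\langle \sM_\lambda\tau^{-1}(\cdot)\rangle^{\low}$ is linear on this family and that the $v^A$ prefactors cancel. One small caveat: your closing remark that the proposition "only asserts equality of lowest-order truncations" misreads the statement --- it is an exact identity in $v$ and $s$ (the full $v$-dependence of the left side is reproduced on the right through the sum over $\lambda$ and the prefactor $1/Z_Y$), but since you obtain it by summing the exact identity \eqref{knotvertexflop} this does not affect the validity of your argument.
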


In particular this applies to the skein elements $[L_C]$ assuming coordinates are such that $\{x=0\}$ is not a branch.
One feature of this identity is that, on the right-hand side, the entire $v$-dependence comes from the auxiliary partition $\lambda$.

Finally, since any link $L$ can be written as a braid closure, this identity can be applied in general, i.e. not just for algebraic links.
In geometric terms, given a braid $Y$ we are calculating the colored HOMFLY polynomial after composing with a full twist (and taking the closure) in terms of the $v=0$ HOMFLY specialization of the link obtained by adding a meridian to $Y$.  These two links can be related by the local Kirby moves of Fenn-Rourke \cite{fenn-rourke}, suggesting a $3$-manifold interpretation of this identity.

\section{Proof of main theorems}

In this section, we give proofs of Theorems \ref{mainthm1} and \ref{mainthm2}.
Given a plane curve singularity $C$ as always, with $r$ irreducible components, and an $r$-tuple
of partitions $\arrmu = (\mu_1, \dots, \mu_r)$, choose local coordinates, truncated Puiseux representatives of the branches, and let
$[L_C] \in \calC_{+}$ be the skein element constructed in Section \ref{linkdescription}.

Since the HOMFLY polynomial differs from the skein trace by a monomial shift, we can rephrase the claim of Theorem \ref{mainthm2} as follows:
\begin{prop}\label{skeinmainprop}
For some choice of exponents $a, b, \epsilon$, we have the identity
\begin{equation}\label{skeinmaintheorem}
\sZ'(Y,C, \arrmu; q, Q) = (-1)^{\epsilon} v^{a} s^{b} \langle [L_C*\left(Q_{\arrmu^{\tr}}\right)] \rangle\,
\end{equation}
after the change of variables
$$q = s^{2},\quad Q = -v^{2}.$$
\end{prop}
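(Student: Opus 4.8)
\textbf{Proof plan for Proposition \ref{skeinmainprop}.}
The strategy is an induction on the number of blowups needed to resolve the singularity $(C,p)$, using embedded resolution of singularities as in the outline of the introduction. The base case is when the total transform has at worst nodal singularities: here the stable pairs side can be computed by the topological vertex formalism and the left-hand side of \eqref{skeinmaintheorem} is a product of one-leg vertex contributions $H_\lambda(q)$ (via Lemma \ref{basecase}), while on the link side a nodal singularity corresponds to a Hopf link or a disjoint union of unknots, whose colored HOMFLY polynomials are given explicitly by Propositions \ref{unknotformula} and \ref{hopfformula}. Matching these two closed formulas, and absorbing the discrepancy into the monomial $(-1)^\epsilon v^a s^b$, establishes the proposition in the base case. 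The key point is that the $Q=0$ specialization of Theorem \ref{mainthm2} in the nodal case is classical, and the $Q$-dependence of both sides is controlled by $\sZ_Y(q,Q) = \prod_k(1+q^kQ)^k$, which matches the topological PT series of the resolved conifold appearing uniformly in Proposition \ref{knotblowupidentity} and in Proposition \ref{realblowupidentity}.

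For the inductive step, I would compare the two blowup identities proved earlier. On the algebro-geometric side, Proposition \ref{realblowupidentity} expresses $Q^{|\mu|}\sZ'(Y,C,\arrmu;q,Q^{-1})$ in terms of the punctual generating functions $\sZ'(Y,D_k,(\arrmu[k],\lambda);q,Q=0)$ of the singularities $D_k$ of the total transform, summed against $Q^{|\lambda|}q^{f(\lambda,\arrmu)+\delta}H_\lambda(q)^{2-e}$ and divided by $\sZ_Y(q,Q)$. On the link side, combining Proposition \ref{blowupforknots}, Proposition \ref{meridianblowup}, and Proposition \ref{knotblowupidentity} (applied to $X = [L_{B_1}\cdots L_{B_{e-1}}]$ and using Lemma \ref{productrule} to factor $\langle \sM_\lambda(\prod_i X_i)\rangle^{\low}$ into $(\langle Q_\lambda\rangle^{\low})^{2-e}\prod_k\langle\sM_\lambda(L_{B_k})\rangle^{\low}$ or the corresponding decorated statement) yields a structurally identical expression: a sum over auxiliary partitions $\lambda$ of the form $(-v^2)^{|\lambda|}s^{-\kappa_\lambda}\langle Q_\lambda\rangle^{\low}$ times a product of colored HOMFLY traces of the links $L_{D_k}$, divided by $\sZ_Y(s^2,-v^2)$. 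Under the change of variables $q=s^2$, $Q=-v^2$, the factor $H_\lambda(q)^{2-e}$ matches $(\langle Q_\lambda\rangle^{\low})^{2-e}$ up to the monomial recorded in $s_\lambda(q^\rho) = (-1)^{|\lambda|}q^{h(\lambda)/2 + \kappa_\lambda/4}H_\lambda(q)$ from Section \ref{blowupsetup}, and the sign/monomial factors $Q^{|\lambda|}q^{f(\lambda,\arrmu)+\delta}$ match $(-v^2)^{|\lambda|}s^{-\kappa_\lambda}$ up to the explicit formula for $f(\lambda,\arrmu)$ in the lemma following Proposition \ref{realblowupidentity}, together with the framing shift $\tau^{-1}$. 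Since each $D_k$ has strictly smaller resolution complexity than $(C,p)$, the inductive hypothesis applies to each punctual factor $\sZ'(Y,D_k,(\arrmu[k],\lambda);q,Q=0) = (-1)^{\epsilon_k}v^{a_k}s^{b_k}\langle [L_{D_k}*(Q_{(\arrmu[k],\lambda)^{\tr})}]\rangle|_{Q=0}$; multiplying these through and using the link blowup formula to reassemble $\langle [L_C*(Q_{\arrmu^{\tr}})]\rangle$ gives \eqref{skeinmaintheorem} for $C$, with new exponents $a,b,\epsilon$ assembled from the $a_k,b_k,\epsilon_k$, the shifts $f,\delta$, and the $v$-degree bookkeeping.

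The main obstacle I anticipate is the monomial bookkeeping: one must verify that the various shifts --- the framing factor $\tau^{-1}$ (whose eigenvalue $v^{|\mu|}s^{-\kappa_\mu}$ interacts with both sides), the exponent $f(\lambda,\arrmu) = h(\lambda) - \sum_i m_i'(\mu_i,\lambda)$, the normalization $\delta$, the passage $s_\lambda(q^\rho) \leftrightarrow H_\lambda(q)$, and the $v$-degree shift $m+A$ from the \emph{low}-degree normalization in Section \ref{lowdegree} --- all combine so that the \emph{same} auxiliary-partition weight $(-v^2)^{|\lambda|}s^{-\kappa_\lambda}$ appears on both sides term-by-term in $\lambda$. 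A subtle ingredient is that in Proposition \ref{knotblowupidentity} the entire $v$-dependence of the right-hand side is carried by $\lambda$ (noted at the end of Section \ref{lowdegree}), which is exactly what allows the inductive hypothesis for the $D_k$ --- whose statement a priori involves full HOMFLY traces, not just lowest-degree parts --- to be fed into a formula that only sees $\langle\cdot\rangle^{\low}$; one must check that the lowest-$v$-degree part of $\langle [L_{D_k}*(\cdots)]\rangle$ is precisely the term that survives, which follows from Proposition \ref{lowdegreealgebraic} guaranteeing a nonzero leading coefficient on the concatenated partition $\gamma$. I would also need to handle the auxiliary cases where $\{x=0\}$ (or $\{y=0\}$) is a branch, which is dealt with by applying the meridian operator $\sM_\lambda$ as in Proposition \ref{meridianblowup} and the corresponding remark in Section \ref{linkdescription}, and to confirm that the choice of local coordinates and truncations does not affect either side (true on the geometric side since $\sZ'$ depends only on the formal neighborhood, and on the link side since $\cL_C$ depends only on the topological type). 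Once the exponents are shown to match, specializing all $\mu_i = (1)$ and unwinding the normalizations of Section \ref{coloredhomfly} recovers Theorem \ref{mainthm1} with the explicit shift $(v/s)^{\mu-1}$.
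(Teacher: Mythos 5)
Your plan follows the paper's own argument in all essentials: induction on the number of blowups, the topological-vertex base case for the smooth/nodal singularities, term-by-term matching in the auxiliary partition $\lambda$ of the two blowup identities (Propositions \ref{realblowupidentity} and \ref{knotblowupidentity}) via Propositions \ref{blowupforknots}, \ref{meridianblowup} and Lemma \ref{productrule}, and you correctly isolate both the $\lambda$-independence of the monomial shift (the paper's Lemma \ref{monomialshift}) and the point that the induction feeds the lowest-$v$-degree specialization of the $D_k$ statements into the full statement for $C$, with nonvanishing of the leading $Q_{\gamma}$ coefficient (plus strict monotonicity of the $s$-order of $\langle \sM_{\lambda^{\tr}}Q_{\nu}\rangle^{\low}$ in the dominance order) guaranteeing that this specialization is well-defined. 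This is the same proof, stated as a plan rather than with the bookkeeping carried out.
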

Since the left-hand side is of the form
$$1+ q\cdot \sO(q) + Q\cdot \sO(Q,q)$$
where $\sO(q)$ is regular at $q=0$ and $\sO(Q,q)$ is regular at $Q=0$ (but perhaps not at $q=0$),
the unknown exponents are uniquely determined by the behavior of the skein trace at $v, s \rightarrow 0$.  The claim only depends on the underlying link and is independent of the skein presentation (which depends on our choice of coordinates, etc.).

As explained in the introduction, our strategy is to use our blowup identities and embedded resolution of singularities to reduce to the 
case where $C$ is either smooth or a nodal singularity.  In fact, because of the structure of these formulas, our induction only requires
the $v=0$ specialization of \eqref{skeinmaintheorem}.

\subsection{Specialization at $v=0$}

We first state what we should think of as the specialization of Proposition \ref{skeinmainprop} after taking the lowest degree terms in $Q$ and $v$ on each side.
\begin{prop}\label{v=0skeinprop}
\begin{equation}\label{v=0maintheorem}
\sZ'(Y,C, \arrmu; q, Q=0) = (-1)^{\epsilon} s^{b} \langle [L_{C}*\left(Q_{\arrmu^{\tr}}\right)] \rangle^{\low}\,.
\end{equation}
for some choice of $\epsilon$ and $b$ and after the change of variables $q= s^2$.
\end{prop}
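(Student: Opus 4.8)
The plan is to prove \eqref{v=0maintheorem} by induction on the number of blowups needed in an embedded resolution of the singularity $(C,p)$, using Proposition \ref{realblowupidentity} on the algebro-geometric side and Proposition \ref{meridianblowup} together with Lemma \ref{productrule} on the link side. First I would extract the $Q=0$, lowest-$v$-degree specialization of the blowup identity \eqref{realblowupidentity}: setting $Q=0$ kills all $\sZ'(Y,D_k,(\arrmu[k],\lambda);q,Q=0)$ except those with $\lambda=\emptyset$ is \emph{not} quite right, since the left side has a $Q^{|\mu|}$ in front and $Q^{-1}$ inside; rather, matching the coefficient of $Q^{|\mu|}$ on the left with the coefficient of $Q^{|\lambda|}$ on the right forces $|\lambda|=|\mu|$, and among those the dominance-minimal term (which by Lemma \ref{schurexpansion} and the surrounding discussion is the one that survives at lowest order in $s$) is $\lambda=\mu$. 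This gives a clean relation expressing $\sZ'(Y,C,\arrmu;q,Q=0)$ in terms of $H_\mu(q)^{2-e}$, the punctual factors $\sZ'(Y,D_k,(\arrmu[k],\mu);q,Q=0)$, and an explicit monomial $q^{f(\mu,\arrmu)+\delta}$.

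Second, I would do the parallel computation on the link side. By Proposition \ref{blowupforknots}, $[L_C*(Q_{\arrmu^\tr})]$ is a satellite $S^1_1*(\cdots)$ or $T^1_1*(\cdots)$ of the product $\prod_k L_{B_k}*(Q_{\arrmu[k]^\tr})$; taking the lowest-degree part and using Lemma \ref{productrule} (the product rule for $\langle\sM_\lambda(\prod X_i)\rangle^{\low}$) together with the fact, from Proposition \ref{meridianblowup}, that $[L_{D_k}]=\sM_\lambda(L_{B_k})$ reduces the trace of the whole link to the $\langle Q_\lambda\rangle^{\low}$ factors and the traces $\langle [L_{D_k}*(Q_{\arrmu[k]^\tr},Q_{\lambda})]\rangle^{\low}$ for the decorated singularities $D_k$. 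The key point is that the auxiliary partition $\lambda$ summed over on the geometric side corresponds exactly to the meridian label $\lambda$ created by the exceptional divisor on the link side: blowing up $C$ produces $C'\cup E$, and $E$ becomes a component $\{x=0\}$ carrying an arbitrary partition. One then matches the combinatorial weights: $H_\lambda(q)^{2-e}$ against $(\langle Q_\lambda\rangle^{\low})^{2-e}$ (using $s_\lambda(q^\rho)=\langle Q_\lambda\rangle^{\low}|_{q=s^2}$ up to the monomial shift recorded in the $H_\lambda$-vs-$s_\lambda(q^\rho)$ formula), and the monomial $q^{f(\lambda,\arrmu)+\delta}$ against the framing shift $s^{-\kappa_\lambda}$ and writhe factors hidden in $\langle\sM_\lambda\tau^{-1}(\cdots)\rangle^{\low}$, invoking the explicit formula $f(\lambda,\arrmu)=h(\lambda)-\sum_i m'_i(\mu_i,\lambda)$ from the lemma after Proposition \ref{realblowupidentity} and equation \eqref{hopflowestorder}.

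The base case of the induction is when $C$ is smooth or nodal. For a smooth branch, both sides reduce to the colored unknot: the left side is $\sZ'(Y,C_{\mu_i};q,Q=0)=H_{\mu_i}(q)$ up to shift (this is Lemma \ref{basecase}, the one-leg vertex $s_{\mu_i}(q^\rho)$), and the right side is $\langle Q_{\mu_i}\rangle^{\low}$, and these agree by the formula in Section \ref{lowdegree}. For a node, the $Q=0$ specialization of Theorem \ref{mainthm2} is the stated well-known consequence of the topological vertex formalism (\cite{akmv,orv}). The inductive step then combines the two blowup formulas above, and the content is purely that the two monomial shifts and the two combinatorial prefactors agree.

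\textbf{Main obstacle.} The hard part will be the bookkeeping of monomial shifts: one must check that the powers of $q$ and $s$ produced by the Euler-characteristic computation ($f(\lambda,\arrmu)$, $\delta$, the $H_\lambda$-to-$s_\lambda(q^\rho)$ normalization) match, after $q=s^2$, the powers of $s$ and signs produced on the link side by the writhe factors in the definition of $\sW$, the framing eigenvalue $s^{\kappa_\lambda}$ of $\tau$, and the $s$-expansions in \eqref{hopflowestorder}. Getting the signs $(-1)^\epsilon$ to line up — tracking $(-1)^{|\lambda|}$ from the vertex, $(-1)^m$ from Proposition \ref{knotblowupidentity}, and the contributions of each branch — is where the argument is most delicate, and is presumably why the authors only write the shift explicitly in the uncolored case. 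Everything else is a matching of already-established formulas.
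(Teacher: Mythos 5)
Your overall architecture (induct on blowups, match Proposition \ref{realblowupidentity} against the link-side blowup formula, base case = node via the vertex) is the right one, and your second step and your ``main obstacle'' paragraph accurately describe what the paper does in Proposition \ref{inductiveprop} and Lemma \ref{monomialshift}. But your first step contains a genuine error that breaks the induction as you have set it up. You claim that extracting the coefficient of $Q^{|\mu|}$ from Proposition \ref{realblowupidentity} leaves only the single term $\lambda=\mu$, by a dominance-minimality argument. This is false for two reasons. First, the prefactor $1/\sZ_Y(q,Q)=\prod_k(1+q^kQ)^{-k}$ on the right-hand side is a power series in $Q$ with constant term $1$, so the coefficient of $Q^{|\mu|}$ on the right receives contributions from \emph{every} partition $\lambda$ with $|\lambda|\le|\mu|$, not only those with $|\lambda|=|\mu|$. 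Second, even among the $\lambda$ with $|\lambda|=|\mu|$ there is no mechanism that kills all but $\lambda=\mu$: each summand $q^{f(\lambda,\arrmu)+\delta}H_\lambda(q)^{2-e}\prod_k\sZ'(Y,D_k,(\arrmu[k],\lambda);q,Q=0)$ is a nonzero $q$-series, and the dominance-order filtration of Lemma \ref{schurexpansion} lives in the expansion of the \emph{link-side} element in the basis $Q_\nu$ of $\calC_+$, not in the geometric sum over $\lambda$. So there is no ``clean relation'' involving only $\sZ'(Y,D_k,(\arrmu[k],\mu);q,Q=0)$, and your proposed $Q=0$ specialization of the blowup identity does not exist.

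The fix is structural: the blowup identity determines the \emph{entire} $Q$-dependence of $\sZ'(Y,C,\arrmu;q,Q)$ from the $Q=0$ data of the singularities $D_k$ decorated by \emph{all} partitions $\lambda$ (and likewise Proposition \ref{knotblowupidentity} determines the entire $v$-dependence of $\langle [L_C*(Q_{\arrmu^{\tr}})]\rangle$ from the $v=0$ data of the meridian-decorated links $L_{D_k}$). The induction therefore has to alternate between the two statements: the $Q=0$ statement for all $(D_k,(\arrmu[k],\lambda))$ implies the \emph{full} identity $\sZ'(Y,C,\arrmu;q,Q)=(-1)^{\epsilon}v^as^b\langle [L_C*(Q_{\arrmu^{\tr}})]\rangle$ for $C$ (by matching the $\lambda$-summands of the two blowup formulas term by term, which is where your monomial bookkeeping lives), and one then recovers the $Q=0$ statement for $C$ by specializing the full identity. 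That last step requires one more ingredient you have not addressed: a priori $\langle [L_C*(Q_{\arrmu^{\tr}})]\rangle^{\low}$ could vanish, so one must separately show it is nonzero; this follows from the order-of-vanishing computation at $s=0$ (the dominance-minimal term $Q_\gamma$ in the expansion of the link element contributes a strictly smaller order of vanishing than all other terms, hence cannot cancel).
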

A priori, the right-hand side could be zero, so this proposition is not immediately implied by Proposition \ref{skeinmainprop}, although it is once
we show this nonvanishing.  We return to this point at the end of Section \ref{sectionmonomialshift}.

The first immediate observation is that Proposition \ref{v=0skeinprop} holds in the case when $C = \{x = 0\}$ or $C = \{xy = 0\}$.
\begin{lem}\label{basecase}
Proposition \ref{v=0skeinprop} holds for $C = \{x=0\}$ with any partition $\mu$ and for $C = \{xy = 0\}$ with any pair of partitions $\mu_1, \mu_2$.
\end{lem}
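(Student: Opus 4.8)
The plan is to treat the two cases as the trivial base cases of the whole induction, where both sides can be computed in closed form. First I would handle $C = \{x=0\}$. On the algebro-geometric side, $C_{\mu}$ is the pure one-dimensional subscheme of $Y$ cut out by $z^{j-1}x^{\mu^{(j)}}$, $j = 1, \dots, \ell(\mu)$, which is a $T$-fixed, Cohen-Macaulay thickening of the fiber direction inside $\pi^{-1}(0)$; after setting $Q=0$ the only stable pairs that contribute are those supported on the curve $C_\mu$ itself, and the generating function $\sZ'(Y, \{x=0\}, \mu; q, Q=0)$ is precisely the one-leg Calabi-Yau stable pairs vertex $H_{\mu}(q)$ studied in \cite{pt-vertex}. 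On the skein side, $L_{\{x=0\}}$ is a single unknotted strand (the degenerate Puiseux case $y(x) = 0$, so $L_C = T_1^0$), hence $L_C * (Q_{\mu^{\tr}}) = Q_{\mu^{\tr}}$, and Proposition \ref{unknotformula} together with the lowest-degree extraction in Section \ref{lowdegree} gives $\langle Q_{\mu^{\tr}} \rangle^{\low} = \prod_{\Box} s^{c(\Box)}/(s^{h(\Box)} - s^{-h(\Box)}) = s_{\mu^{\tr}}(q^{\rho})$ after $q = s^2$, which (via the involution $\omega$) equals $s_\lambda(q^\rho)$-type data matching $H_\mu(q)$ up to the monomial $(-1)^{|\mu|} q^{\frac{1}{2}h(\mu) + \frac{1}{4}\kappa_\mu}$ recorded in the displayed formula for the one-leg vertex. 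So the identity reduces to this known vertex/Schur-function evaluation, with $\epsilon$ and $b$ read off from that monomial.

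For $C = \{xy = 0\}$, the curve is a node with two smooth branches, $C_{i} = \{x = 0\}$ and $C_{2} = \{y = 0\}$, and the corresponding link is the Hopf link with components labelled by $\mu_1^{\tr}, \mu_2^{\tr}$. Here the skein side is exactly the colored Hopf link computation of Section \ref{unknothopfcalc}: $\langle L_{\{xy=0\}} * (Q_{\mu_1^{\tr}}, Q_{\mu_2^{\tr}}) \rangle^{\low} = \langle \sM_{\mu_1^{\tr}}(Q_{\mu_2^{\tr}}) \rangle^{\low} = t_{\mu_1^{\tr}}^{\low}(Q_{\mu_2^{\tr}}) \langle Q_{\mu_1^{\tr}} \rangle^{\low}$, which by \eqref{hopflowdegree} and Proposition \ref{unknotformula} is a product of Schur evaluations $s_{\mu_2^{\tr}}(q^{\mu_1^{\tr} + \rho}) \cdot s_{\mu_1^{\tr}}(q^\rho)$. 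On the geometry side, the node case with arbitrary partition labels on the two branches is the standard two-leg (genus zero) topological vertex / product of one-leg vertices computation referenced in the introduction (see \cite{akmv, orv}, and the $Q=0$ topological vertex formalism), and $\sZ'(Y, \{xy=0\}, (\mu_1, \mu_2); q, Q=0)$ evaluates to the corresponding product of $H_\mu$-factors glued along $E$, matching the Schur product above up to an explicit monomial. I would again cite these known evaluations rather than reproving them, and extract $\epsilon, b$ from the bookkeeping.

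The main obstacle is not any deep content but the monomial bookkeeping: one must match the normalization conventions of $\sZ'$ (where the constant term is $1$, coming from trivial cokernel) with the lowest-degree normalization of the skein trace $\langle \cdot \rangle^{\low}$ (built from the $v^{-|\lambda|}$ leading term and then expanded at $s = 0$, giving the monomial $(-1)^{|\lambda|} s^{h(\lambda) + \frac{1}{2}\kappa_\lambda}$), and to confirm these agree after $q = s^2$ up to the sign $(-1)^{\epsilon}$ and the power $s^b$ — and in particular to verify that the leading skein term does not vanish, so that the claimed identity is an honest equality of nonzero quantities. Since the partitions in these base cases are arbitrary, I would need the full (not just lowest-order) vertex formula for $\{x=0\}$ and $\{xy=0\}$ precisely as an input, and then truncate; the transpose in $\arrmu^{\tr}$ is handled by the $\omega$-involution, which interchanges $h(\mu)$ with $h(\mu^{\tr})$ and sends $\kappa_\mu$ to $-\kappa_{\mu^{\tr}}$, contributing to $b$.
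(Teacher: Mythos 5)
Your proposal is correct and follows essentially the same route as the paper: both cases reduce to the known one-leg/two-leg topological vertex evaluations on the stable-pairs side (via \cite{orv} and \cite{pt-vertex}) and to Proposition \ref{unknotformula} and equation \eqref{hopflowdegree} on the skein side, with $\epsilon$ and $b$ absorbed into the monomial bookkeeping. The only point the paper is more explicit about is how the Schur-function evaluation of the \emph{stable pairs} vertex (as opposed to the DT box-counting vertex of \cite{orv}) is justified --- via the DT/PT vertex comparison of \cite{mpt, moop} and the sign relation between the virtual and Euler-characteristic vertices --- which you implicitly subsume in citing \cite{pt-vertex}.
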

\begin{proof}
This follows from the fact that the two-leg topological vertex calculates both the stable pairs vertex and 
the $v=0$ specialization of the HOMFLY polynomial of the Hopf link.  The relation between the topological vertex
and the box-counting generating function of the (non-virtual) Donaldson-Thomas vertex is proven in \cite{orv}.  The same argument gives 
the stable pairs vertex of \cite{pt-vertex}, as will appear in upcoming work of Ben Young \cite{benyoung}.  In the meantime, a geometric argument for the equality of
the virtual DT and PT vertex is provided in \cite{mpt} using techniques of \cite{moop} (valid even in the non-Calabi-Yau case).  After a global sign, the Euler-characteristic and virtual 
vertices are related by the substitution $q \rightarrow -q$, so this gives equality of topological Euler characteristics as well.
The relation between the two-leg topological vertex and the HOMFLY polynomial at $v=0$ is just equation \eqref{hopflowdegree}.
\end{proof}

\subsection{Inductive step}

We now state the inductive step.  Given $C$ and $\arrmu$ as above, we take the blowup of $\CC^2$ at $p$ and fix all notation for $D_k, p_k,$ etc. as in Section \ref{blowupsetup}.
Assume moreover that we have chosen local coordinates for $C$ (and chosen truncated Puiseux series) satisfying the conditions of Section \ref{blowupanalysis},
so that the results of that section apply and we have the corresponding annulus diagrams $L_C$ and $L_{D_{k}}$.

\begin{prop}\label{inductiveprop}
Suppose Proposition \ref{v=0skeinprop} holds for each planar singularity $(D_k, p_k)$ and $L_{D_{k}}$ labelled with partitions $(\arrmu[k], \lambda)$, for all possible values of the partition $\lambda$.
Then Proposition \ref{skeinmainprop} holds for $C$ labelled with $\arrmu$.
\end{prop}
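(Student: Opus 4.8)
The goal is to bootstrap from the blowup identities on both sides — Proposition \ref{realblowupidentity} on the stable-pairs side and Proposition \ref{knotblowupidentity} on the skein side — and match them term-by-term. The inductive hypothesis gives, for each $k$ and each auxiliary partition $\lambda$, an equality
$$
\sZ'(Y, D_k, (\arrmu[k],\lambda); q, Q=0) = (-1)^{\epsilon_k(\lambda)} s^{b_k(\lambda)} \langle [L_{D_k}*(Q_{(\arrmu[k],\lambda)^{\tr}})] \rangle^{\low}.
$$
First I would rewrite the right-hand side of Proposition \ref{realblowupidentity}: substitute this hypothesis into each factor, and use $H_\lambda(q) = (-1)^{|\lambda|} q^{-h(\lambda)/2 - \kappa_\lambda/4} s_\lambda(q^\rho)$ from the vertex relation (Lemma \ref{basecase}) and $\langle Q_\lambda \rangle^{\low} = s_\lambda(q^\rho)$ after $q = s^2$ from Section \ref{lowdegree}, to convert the factor $H_\lambda(q)^{2-e}$ and the monomial $q^{f(\lambda,\arrmu)+\delta}$ entirely into skein-theoretic quantities. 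On the skein side, I would feed $X = [L_C*(\cdots)]$ — or rather the concatenation $L_{B_1}\cdots L_{B_e}$ from Proposition \ref{blowupforknots} — into Proposition \ref{knotblowupidentity}, using Proposition \ref{meridianblowup} that adding the exceptional divisor $E$ corresponds to applying $\sM_\lambda$, and using Lemma \ref{productrule} (the product rule for $\langle \sM_\lambda(\prod X_i)\rangle^{\low}$) to break the meridian trace over the concatenated diagram into a product of the individual $\langle \sM_\lambda(L_{B_k})\rangle^{\low} = \langle [L_{D_k}*(\cdots)]\rangle^{\low}$ factors, up to the $\langle Q_\lambda\rangle^{\low}$ correction terms. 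The term $H_\lambda(q)^{2-e}$ on the geometry side must be matched against the $(\langle Q_\lambda\rangle^{\low})^{2-e}$ that Lemma \ref{productrule} produces together with the $\langle Q_\lambda\rangle^{\low}$ sitting in front of each term of \eqref{knotvertexflop}.

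\textbf{Key steps, in order.} (1) Apply the inductive hypothesis to each punctual factor in Proposition \ref{realblowupidentity}. (2) Identify $[L_C*(Q_{\arrmu^{\tr}})]$ via Proposition \ref{blowupforknots} as a satellite of a concatenation of the $L_{B_k}$'s with a full-twist applied to one factor (or $T^1_1$ of the concatenation, depending on whether any $\alpha_i > 1$); note the $\tau$ here is exactly accounted for by the $\tau^{-1}$ appearing in Proposition \ref{knotblowupidentity}, so the two cancel, which is why the induction closes. (3) Apply Proposition \ref{knotblowupidentity} to this element, and use Propositions \ref{meridianblowup}, \ref{blowupforknots} and Lemma \ref{productrule} to rewrite the right-hand side as a sum over $\lambda$ of $Q^{|\lambda|}$ times a monomial in $s$ times $(\langle Q_\lambda\rangle^{\low})^{2-e}\prod_k \langle [L_{D_k}*(\cdots)]\rangle^{\low}$. (4) Compare the two resulting sums over $\lambda$ term by term: the coefficient structures now agree up to (a) an overall monomial $(-1)^? v^? s^?$ independent of $\lambda$, and (b) a $\lambda$-dependent monomial discrepancy, which must be shown to vanish. (5) Verify (b) — this is the bookkeeping of the monomial shifts $f(\lambda,\arrmu)$, $\delta$, $\kappa_\lambda$, $h(\lambda)$, the writhe factors from the definition of $\sW$, and the exponents $A, B$ from Proposition \ref{lowdegreealgebraic}. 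For this I would use the explicit formula $f(\lambda,\arrmu) = h(\lambda) - \sum_i m'_i(\mu_i,\lambda)$ from the Lemma following Proposition \ref{realblowupidentity}, together with the transpose relations $(\mu_i,\lambda) = (\mu_i^{\tr},\lambda^{\tr})$ implicit in passing between $\arrmu$ and $\arrmu^{\tr}$, and the identity $\kappa_{\lambda\cup\mu} - \kappa_\mu = \kappa_\lambda - 2(\lambda^{\tr},\mu^{\tr})$ from Section \ref{algebraiclowdegree}. (6) Once the $\lambda$-dependent parts match, the surviving $\lambda$-independent monomial defines the exponents $a, b, \epsilon$ for $C$, and Proposition \ref{skeinmainprop} follows; the nonvanishing needed to deduce Proposition \ref{v=0skeinprop} for $C$ comes for free since the top term ($\lambda = \emptyset$ on the right, lowest $Q$-degree) contributes $\prod_k \langle[L_{D_k}*(\cdots)]\rangle^{\low}$, nonzero by induction, and there is no cancellation at that degree.

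\textbf{Main obstacle.} The conceptual content — that the two blowup identities are formally the same identity — is essentially forced by the parallel shapes of Propositions \ref{realblowupidentity} and \ref{knotblowupidentity} together with the product rule. The real work, and the step I expect to be delicate, is step (5): reconciling all the monomial shifts. There are several independent sources of monomials (the flop shift $\delta$, the Euler-characteristic shift $f(\lambda,\arrmu)$, the framing normalizations in the definition of $\sW$ versus the bare skein trace $\langle\cdot\rangle$, the $\tau^{-1}$ eigenvalue $v^{|\mu|}s^{-\kappa_\mu}$, and the $q^{-\kappa_\lambda/2}$ appearing in \eqref{vertexflop}/\eqref{knotvertexflop}), and one must check that their $\lambda$-dependence cancels exactly while tracking how the $\arrmu \leftrightarrow \arrmu^{\tr}$ transpose convention interacts with $\kappa$, $h$, and the dot products $(\mu_i,\lambda)$. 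Since the statement of Proposition \ref{skeinmainprop} only asserts \emph{existence} of $a, b, \epsilon$, I only need the $\lambda$-dependent cancellation; the $\lambda$-independent leftover can be absorbed, so I would not attempt to compute $a, b, \epsilon$ explicitly except to note (as the paper does) that one can, and will do so in the uncolored case to recover Theorem \ref{mainthm1}.
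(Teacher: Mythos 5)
Your proposal is correct and follows essentially the same route as the paper: apply the two blowup identities, match the summands over $\lambda$ term by term using the inductive hypothesis, Proposition \ref{blowupforknots} with the $\tau$/$\tau^{-1}$ cancellation, Proposition \ref{meridianblowup}, and Lemma \ref{productrule}, and then verify that the residual signed monomial is independent of $\lambda$. The one step you flag as delicate (your step (5)) is exactly what the paper isolates as Lemma \ref{monomialshift}; its proof pins down the order of vanishing at $s=0$ by observing that in the expansion of Proposition \ref{lowdegreealgebraic} only the leading $Q_{\gamma}$ term contributes, because the vanishing order of $\langle \sM_{\lambda^{\tr}}Q_{\nu}\rangle^{\low}$ is strictly increasing in $\nu$ for the dominance order — precisely the ingredients you list.
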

\begin{proof}

It suffices to 
match $\sZ'(Y,C,\arrmu)$ and $\langle [L_C*(Q_{\arrmu^{\tr}})]\rangle$ after the substitutions $Q = Q^{-1}$ and $v = v^{-1}$ respectively.
To do this, we will use the two blowup identities, given in Proposition \ref{realblowupidentity} and Proposition \ref{knotblowupidentity}.
Since we are allowed to match the two sides of equation \eqref{skeinmaintheorem} after a monomial shift, we need
to show that the right-hand side of these blowup identities agree, up to a possible global sign and monomial shift.

If we compare these two expressions, the denominators are both given by $Z_Y(q,Q)$ after the change of variables, so we restrict to the
summation over all partitions.
Let us match individual terms in each summation, by sending the summand
$$F_{\lambda} (q,Q) = Q^{|\lambda|}q^{f(\lambda,\arrmu) + \delta} \prod_{k=1}^{e} \sZ'(Y, D_{k}, (\arrmu[k], \lambda); q, Q=0) \cdot H_{\lambda}(q)^{2-e}$$
 corresponding to $\lambda$ in Proposition \ref{realblowupidentity} to the summand 
 $$G_{\lambda}(v,s) = (-v^{2})^{|\lambda^{\tr}|}
s^{-\kappa_{\lambda^{\tr}}} \langle Q_{\lambda^{\tr}} \rangle^{\low} \langle \sM_{\lambda^{\tr}}\tau^{-1} L_C*(Q_{\arrmu^{\tr}}) \rangle^{\low}\,$$
corresponding to $\lambda^{\tr}$ in Proposition \ref{knotblowupidentity}.

By Proposition \ref{blowupforknots} and equation \eqref{framedsplice},
we see that
$$\tau^{-1} L_C*(Q_{\arrmu^{\tr}}) = \prod_{k=1}^{e} L_{B_{k}}*(Q_{\arrmu[k]^{\tr}}).$$
Up to a signed monomial shifts, we have that
\begin{align*}
F_{\lambda}(q,Q) &= \textup{monomial}\cdot \prod_{k} \sZ'(Y, D_{k}, (\arrmu[k], \lambda); q, Q=0) \cdot H_{\lambda}(q)^{2-e}\\
&= \textup{monomial} \cdot\prod_{k} \langle L_{D_{k}}*(Q_{\arrmu^{\tr}[k]}, Q_{\lambda^{\tr}}) \rangle^{\low} \cdot  \left(\langle Q_{\lambda^{\tr}} \rangle^{\low}\right)^{2-e}\\
&= \textup{monomial} \cdot \prod_{k} \langle \sM_{\lambda^{\tr}}L_{B_{k}}*(Q_{\arrmu^{\tr}[k]}) \rangle^{\low} \cdot  \left(\langle Q_{\lambda^{\tr}} \rangle^{\low}\right)^{2-e}\\
&= \textup{monomial}\cdot \langle \sM_{\lambda^{\tr}} \tau^{-1} L_C*(Q_{\arrmu^{\tr}}) \rangle^{\low} \langle Q_{\lambda^{\tr}} \rangle^{\low}.
\end{align*}
In the second line, we use the assumption, and the fact $H_{\lambda}$ and $\langle Q_{\lambda} \rangle$ are both given by $s_{\lambda}(q^{\rho})$ up to monomials.
For the third line, we use Proposition \ref{meridianblowup}, and, for the last line, we use Lemma \ref{productrule}.

In summary, for each $\lambda$, there exists $\epsilon(\lambda)$ and $a(\lambda)$ such that
$$F_\lambda(q,Q) = (-1)^{\epsilon(\lambda)} s^{a(\lambda)} G_\lambda(v,s).$$
Notice that the dependence on $Q = -v^{2}$ is the same for $F$ and $G$.

\begin{lem}\label{monomialshift}
The exponents $\epsilon(\lambda)$ and $a(\lambda)$ are independent of $\lambda$.
\end{lem}

Assuming this lemma, we finish the proof of Proposition \ref{inductiveprop}.  Indeed, using this lemma, we see
$$\frac{1}{Z_Y(q,Q)}\sum_{\lambda} F_{\lambda}(q,Q) = (-1)^{\epsilon} s^{a} \frac{1}{Z_{Y}(s^{2},-v^{2})} \sum_{\lambda} G_{\lambda}(v,s)$$
which implies
$$ \sZ'(C, \arrmu; q, Q) = \textup{monomial}\cdot \langle [L_C*(Q_{\arrmu})] \rangle.$$
\end{proof}

\subsection{Proof of Lemma \ref{monomialshift}}\label{sectionmonomialshift}

It remains to prove the lemma.
\begin{proof}
We first handle $a(\lambda)$.
In order to do this, we compare the order of vanishing at $s=0$ of $G_{\lambda}(v,s)$ and show that it equals
$2f(\lambda, \arrmu)$ up to terms that are independent of $\lambda$.

Recall the multiplicities $m'_{i}$ of the intersection $C'_i \cap E$ and let
$\gamma$ be the partition 
$$\gamma = (\mu^{\tr}_{1})^{\cup m'_{1}} \cup \dots \cup (\mu^{\tr}_{r})^{\cup m'_{r}}$$
obtained by concatenating the parts of $\mu^{\tr}_{i}$ with multiplicity $m'_i$.
If we take the transpose partition, we see that
$$\gamma^{\tr} = \sum m'_{i} \mu_{i}$$
where the sum is taken componentwise (i.e. $(\mu+ \nu)^{(j)} = \mu^{(j)} + \nu^{(j)}$).

By Proposition \ref{lowdegreealgebraic} and Lemma \ref{schurexpansion}, we have an expansion
\begin{equation}\label{linkexpansion}
\prod_{k} L_{B_{k}}*(Q_{\arrmu^{\tr}[k]}) = v^{A}s^{B}\left(\pm Q_{\gamma} + \sum_{\nu> \gamma} c_{\nu}(s) Q_{\nu}\right)
\end{equation}
with $c_{\nu}(s)$ regular at $s=0$.

Recall from Section \ref{lowdegree} that the order of vanishing at $s=0$ of
$$\langle \sM_{\lambda^{\tr}} Q_{\nu} \rangle^{\low}$$
is
$$ -2(\lambda,\nu^{\tr}) +h(\lambda^{\tr}) + \frac{1}{2}\kappa_{\lambda^{\tr}}+ h(\nu) + \frac{1}{2}\kappa_{\nu}.$$
For fixed $\lambda$, as a function of $\nu$, this is strictly increasing with respect to the dominance order on partitions; indeed this is true
for $h(\nu) + \frac{1}{2}\kappa_{\nu}$ and the other terms are weakly increasing.
Therefore, only the $Q_{\gamma}$ term in equation \eqref{linkexpansion} contributes to the order of vanishing of $G_{\lambda}(v,s)$.
Adding in the order of vanishing from the other terms, we 
see that this order of vanishing is
\begin{align*}
-\kappa_{\lambda^{\tr}} &+ h(\lambda) + \frac{1}{2}\kappa_{\lambda^{\tr}}\\
&-2(\lambda,\gamma^{\tr}) +h(\lambda^{\tr}) + \frac{1}{2}\kappa_{\lambda^{\tr}}+ h(\gamma) + \frac{1}{2}\kappa_{\gamma}\,.
\end{align*}
Up to terms not involving $\lambda$, this is precisely
$$2f(\lambda, \arrmu)$$
which concludes the proof for $a(\lambda)$.

For $\epsilon(\lambda)$, we proceed similarly; we only need to calculate the sign associated to the $Q_{\gamma}$ term in \eqref{linkexpansion}.  The coefficient of $Q_{\gamma}$ is independent of $\lambda$, so can be ignored.
The sign in front of the lowest order term in $\langle Q_{\lambda^{\tr}} \rangle^{\low}$ 
and $\langle \sM_{\lambda^{\tr}} Q_{\gamma} \rangle$ are
 $(-1)^{|\lambda|}$ and $(-1)^{|\lambda| + |\gamma|}$ respectively,
 so the product is independent of $\lambda$.
 This concludes the proof.
 \end{proof}
 
 As a corollary of this argument, observe that given any skein element $X$
 of the form 
 $$v^{A}s^{B}\left(\pm Q_{\gamma} + \sum_{\nu> \gamma} c_{\nu}(s) Q_{\nu}\right)$$
 with $c_{\nu}(s)$ regular at $s=0$, we have
 $$\langle \sM_{\lambda^{\tr}} X \rangle^{\low} \ne 0,$$
 since it has finite order of vanishing at $s=0$.

Given any planar singularity $C$ and arbitrary choice of coordinates, Puiseux truncations, etc., 
$[L_{C}*\left(Q_{\arrmu^{\tr}}\right)]$ has the form  $\sM_{\lambda^{\tr}}X$ with $X$ satisfying the above condition.  If $\{x=0\}$ is not a branch,
then we just take $\lambda = \emptyset$;otherwise the meridian operator is nontrivial.
Therefore, we deduce
$$\langle [L_{C}*\left(Q_{\arrmu^{\tr}}\right)] \rangle^{\low} \ne 0$$
and also the following corollary.
 \begin{cor}\label{secondinductivestep}
Given $C$ and $[L_C]$ as above, Proposition \ref{skeinmainprop} implies Proposition \ref{v=0skeinprop}.
\end{cor}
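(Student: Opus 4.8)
The plan is to deduce Proposition~\ref{v=0skeinprop} from Proposition~\ref{skeinmainprop} by isolating, on each side of the identity \eqref{skeinmaintheorem}, the term of lowest order in $v$ --- equivalently, the specialization at $Q=0$. Write $X = [L_C*(Q_{\arrmu^{\tr}})] \in \calC_{+}$ and make the substitution $q = s^{2}$, $Q = -v^{2}$ throughout. On the left-hand side, $\sZ'(Y,C,\arrmu;q,Q)$ has the form $1 + q\,\sO(q) + Q\,\sO(Q,q)$, so under the substitution its $v$-expansion has $v^{0}$-coefficient equal to $\sZ'(Y,C,\arrmu;q,Q=0)|_{q=s^{2}}$, which is a power series in $s$ with constant term $1$; in particular it is nonzero. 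On the right-hand side, by the definition of $\langle\,\cdot\,\rangle^{\low}$ in Section~\ref{lowdegree} we may write $\langle X \rangle = v^{c}\big(\langle X \rangle^{\low} + v\,\sO(v)\big)$ for some integer $c$, with the coefficients in $\sO(v)$ rational in $s$ and regular at $v=0$; hence the right-hand side of \eqref{skeinmaintheorem} equals $(-1)^{\epsilon} s^{b} v^{a+c}\big(\langle X \rangle^{\low} + v\,\sO(v)\big)$.

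The one substantive ingredient is that $\langle X \rangle^{\low}\neq 0$, which is exactly the content of the displayed observation immediately preceding this corollary (namely that $X = \sM_{\lambda^{\tr}}X_{0}$ with $X_{0}$ of the distinguished form $v^{A}s^{B}(\pm Q_{\gamma} + \sum_{\nu>\gamma}c_{\nu}(s)Q_{\nu})$, whose meridian trace has finite order of vanishing at $s=0$). Granting this, I would then compare lowest $v$-orders: since the left-hand side is a genuine power series in $v$ with nonzero $v^{0}$-term, the right-hand side must be as well, and because $\langle X\rangle^{\low}\neq 0$ this is possible only when $a+c=0$ --- a larger value would kill the $v^{0}$-coefficient on the right, a smaller one would produce a negative power of $v$. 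Equating the $v^{0}$-coefficients of the two sides then gives
\[
\sZ'(Y,C,\arrmu;q,Q=0)\big|_{q=s^{2}} \;=\; (-1)^{\epsilon} s^{b}\,\langle X \rangle^{\low},
\]
which is precisely the assertion of Proposition~\ref{v=0skeinprop}, with the same sign $(-1)^{\epsilon}$ and the same exponent $b$.

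There is essentially no remaining obstacle: the only nontrivial fact used, the nonvanishing $\langle [L_C*(Q_{\arrmu^{\tr}})]\rangle^{\low}\neq 0$, has already been established just above, and everything else is the order-of-vanishing bookkeeping just described. No further input from the geometry of stable pairs or from skein theory is needed for this implication; the corollary is really just the statement that the normalization monomial in Proposition~\ref{skeinmainprop} is forced to line up the leading $v$-terms on both sides.
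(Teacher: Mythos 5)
Your proposal is correct and follows the same route as the paper: the only substantive input is the nonvanishing $\langle [L_C*(Q_{\arrmu^{\tr}})]\rangle^{\low}\ne 0$, which the paper establishes in the observation immediately preceding the corollary (via the order-of-vanishing analysis from the proof of Lemma \ref{monomialshift}), and the rest is exactly the comparison of lowest $v$-orders you describe, forcing $a+c=0$ and identifying the $v^{0}$-coefficients. Your bookkeeping of why $a+c$ can be neither positive (killing the $v^{0}$-term) nor negative (producing a negative power of $v$ absent from the left-hand side) is the intended argument.
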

Also, the validity of Proposition \ref{v=0skeinprop} is independent of choices required in constructing the skein presentation $[L_C]$.
 
\subsection{Proof of Theorems \ref{mainthm1} and \ref{mainthm2}}

In this section, we prove our main results.
 
For Theorem \ref{mainthm2}, given a planar curve singularity $(C,p)$, let $M(C,p)$ denote the minimal number of blowups required for the total
transform of $C$ to have normal crossings singularities.  Proposition \ref{inductiveprop} and Corollary \ref{secondinductivestep} allow us to induct on $M(C,p)$, reducing to the base case of a nodal singularity; a further blowup allows us to reduce to the $v=0$ specialization for a node, which is Lemma \ref{basecase}.
 
To deduce Theorem \ref{mainthm1} from Theorem \ref{mainthm2}, we argue as follows.  Recall from Section \ref{coloredhomfly} that there is a shift in convention between the standard HOMFLY polynomial $\sP(L; v,s)$ and our colored HOMFLY polynomial $\sW(L, (1), \dots, (1); v,s)$.  So we know the statement after some sign and monomial shift, which we need to match with the predicted values.

By Proposition $6$ of \cite{oblomkov-shende}, we know the statement after the specialization
 $v= -1$.  This determines the exponents $\epsilon(C)$ and $a(C)$.  It remains to specify the monomial shift for $v$.
Let $\mu_{i}$ be the Milnor number of each branch of $C$ at $p$.
We have the classical formula
$$\mu - 1 = \sum_{i=1}^{r} (\mu_{i}-1) + 2 \mathrm{lk}(\cL_C).$$
It suffices to show that the monomial shift $v^{b(C)}$ for $\sW(L, (1), \dots, (1);v,s)$ is
$- \sum(\mu_{i}-1).$

It was explained how to calculate $b(C)$ at the end of Section \ref{algebraiclowdegree}.
First, it is additive over the branches of components of $\cL_C$, so it suffices to assume $C$ is irreducible.  
Its value on the Puiseux series $y(x)$ with Newton pairs $\{(p_0, q_0), \dots, (p_s,q_s)\}$
is given by
$$\sum_{i=0}^{s} - p_{i}q_{i} (\prod_{j> i} p_{j})^{2} + \sum_{i=0}^{s}q_{i}(\prod_{j > i} p_{j}) + \prod_{i=0}^{s} p_{i}$$
where the first summand comes from the writhe of $L_C$ in the definition of $\sW$ in Section \ref{coloredhomfly},
the second summand comes from the exponent at the end of Section \ref{algebraiclowdegree}, and the third summand comes 
from the calculation of $\langle Q_{\gamma}\rangle$.
By a classical calculation (e.g. Remark $10.10$ in \cite{milnor} after matching variables), this is precisely 
$1 - \mu(C).$

\subsection{Severi strata}
We now record an easy application of Theorem \ref{mainthm1}.  As mentioned in the introduction, given an integral curve $C$ with locally planar singularities, we can deduce that the topological Euler characteristic of its Hilbert scheme of points,
$C^{[n]}$, only depends on the singularity type of $C$.  

A nice consequence of this fact was pointed out to us by Vivek Shende.  
In what follows, let $$\calC \rightarrow B$$ 
denote a versal family of integral, locally planar curves of genus $g$, and let $\overline{B_h}$ denote the closure of the locus of curves with geometric genus $h \leq g$.
By Theorem A of \cite{vivek-severi}, the multiplicities of $\overline{B_h}$ at points $b \in B$ are determined by $\chi_{\tp}(\calC_b^{[n]})$.  This yields the following corollary.

\begin{cor}\label{severiobservation}
The multiplicity $m_h(b)$ of $\overline{B}_h$ at a point $b \in B$ is constant as $b$ varies along an equisingular locus of $B$.
\end{cor}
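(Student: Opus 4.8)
The plan is to reduce the corollary to Theorem~\ref{mainthm1} through the standard factorization of the Euler characteristics of Hilbert schemes of points on a curve into local contributions, and then to invoke Theorem~A of \cite{vivek-severi}.

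First I would extract the local consequence of Theorem~\ref{mainthm1}. Specializing the definition of $Z_{C,p}$ at $v=0$ gives $Z_{C,p}(0,s)=\sum_{n}s^{2n}\chi_{\tp}(C_{p}^{[n]})$, while the theorem identifies $Z_{C,p}(v,s)$ with $(s/v)^{\mu-1}\sP(\cL_{C,p};v,s)$. The Milnor number $\mu$ and the HOMFLY polynomial of the link $\cL_{C,p}$ are invariants of the topological type of the germ $(C,p)$, so the generating series $\sum_{n}\chi_{\tp}(C_{p}^{[n]})\,q^{n}$ (after setting $q=s^{2}$) depends only on that topological type; in particular it is constant as the germ varies in an equisingular stratum.

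Next I would globalize. Stratifying $\calC_b^{[n]}$ by the support of the length-$n$ subscheme, the part supported away from the finitely many singular points $p_{1},\dots,p_{k}$ of $\calC_b$ only sees the smooth locus $\calC_b^{\mathrm{sm}}$, whereas a cluster concentrating at $p_{j}$ contributes the punctual Hilbert scheme $(\calC_b)_{p_{j}}^{[\bullet]}$. Summing topological Euler characteristics over all strata produces the well-known factorization
\[
\sum_{n}\chi_{\tp}(\calC_b^{[n]})\,q^{n}=(1-q)^{-\chi(\calC_b^{\mathrm{sm}})}\prod_{j=1}^{k}\Big(\sum_{n}\chi_{\tp}\big((\calC_b)_{p_{j}}^{[n]}\big)\,q^{n}\Big),
\]
where on the smooth part one uses $\sum_{n}\chi_{\tp}(\mathrm{Sym}^{n}X)\,q^{n}=(1-q)^{-\chi(X)}$, and a smooth point contributes the local factor $\sum_{n}q^{n}=(1-q)^{-1}$, consistent with its being absorbed into $\calC_b^{\mathrm{sm}}$. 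Thus $\sum_{n}\chi_{\tp}(\calC_b^{[n]})\,q^{n}$ is determined by the local germs at the singular points together with the single global invariant $\chi(\calC_b^{\mathrm{sm}})$.

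Finally I would check that the right-hand side is constant along an equisingular locus of $B$ and apply \cite{vivek-severi}. There the topological types of all germs $(\calC_b,p_{j})$ are fixed, so each local factor is constant by the first step, and the number $k$ of singular points is fixed; moreover $B$ carries a flat family of integral curves of fixed arithmetic genus, while equisingularity fixes the $\delta$-invariant and the number of branches at each $p_{j}$, so the geometric genus and hence $\chi(\calC_b^{\mathrm{sm}})$ (computed from the normalization) are constant too. Consequently every $\chi_{\tp}(\calC_b^{[n]})$ is constant on the stratum, and since Theorem~A of \cite{vivek-severi} writes $m_{h}(b)$ as a fixed function of the sequence $\big(\chi_{\tp}(\calC_b^{[n]})\big)_{n\ge 0}$, the multiplicity $m_{h}(b)$ is constant, which is the assertion. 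I expect the only non-routine point to be the product formula and the accompanying verification that equisingularity controls exactly the data entering it (the topological types of the germs, the number of singular points, and, via the $\delta$-invariants, the geometric genus); the topological input of Theorem~\ref{mainthm1} and the cited result of \cite{vivek-severi} then combine immediately.
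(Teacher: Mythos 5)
Your proposal is correct and follows essentially the same route as the paper: Theorem \ref{mainthm1} (at $v=0$) gives topological invariance of the punctual Hilbert scheme Euler characteristics, the standard local-global factorization upgrades this to constancy of $\chi_{\tp}(\calC_b^{[n]})$ along equisingular loci, and Theorem A of \cite{vivek-severi} then gives the result. The paper merely leaves the factorization formula and the bookkeeping of $\chi(\calC_b^{\mathrm{sm}})$ implicit; your write-up supplies exactly those routine details.
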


\section{Appendix: Hecke character calculation}

In this appendix, we prove Lemma \ref{spliceformula}.  The proof is completely parallel to the case of torus links, proven in 
\cite{morton-manchon} (using \cite{rosso-jones}) and \cite{lin-zheng}.  We will follow the proof in the latter paper closely, and refer the reader there for more details.

We first explain how to match conventions between \cite{morton-manchon} and \cite{lin-zheng} using \cite{lukac-thesis} as a reference.
Recall the Hecke algebra $H_n(z)$ of type $A_{n-1}$ is the algebra over $\QQ[z, z^{-1}]$ with generators $g_1,\dots, g_{n-1}$ and relations
\begin{enumerate}
\item[(i)] $g_i g_j = g_j g_i$ if $|i-j| \geq 2$
\item[(ii)] $g_i g_j g_i = g_j g_i g_j$ if $|i-j|= 1$
\item[(iii)] $(g_i - z)(g_i + z^{-1}) = 0$, for all $i$.
\end{enumerate}
After inverting $\frac{z^{k}-1}{z-1}$ for $2 \leq k \leq n$,
there is a decomposition
$H_n(z) = \prod_{\lambda \sideperp n} H_{\lambda}(z)$
where each $H_{\lambda}(z)$ is a matrix algebra.  Let 
$\pi_\lambda$ denote the corresponding central idempotent of $H_n(z)$ and $V_{\lambda}$ denote the corresponding simple module.

Recall also that, after identifying $z = s$ and enlarging our ring of scalars to $\Lambda$, we have an isomorphism
$\calH_n = H_n(z) \otimes \Lambda$
that identifies positively oriented braids on each side.  Explicit representatives in $\calH_n$ for primitive and central idempotents of $H_n(z)$ are constructed in \cite{lukac-thesis}.  It is shown there (Lemma $2.5.2$) that the primitive idempotents in $H_{\lambda}(z)$ have closure
$Q_{\lambda}$.

Given $X \in \calH_n$ and a partition $\rho \sideperp n$, the coefficient of $Q_{\rho}$ in the closure $\widehat{X} \in \calC_n$
is equal to the trace of $X$ in the representation $V_{\rho}$:
$$\Tr_{V_{\rho}}X = \Tr_{V_{\rho}} \pi_{\rho} X.$$
Indeed, both are trace functions on $H_{\rho}(z)\otimes \Lambda$ that agree on $\pi_{\rho}$.

Given relatively prime integers $(m,n)$, consider the tangle
$\sigma_{m}^{n}$ and partitions $\lambda \sideperp r$ and $\mu \sideperp s$.
Let $\widetilde{\sigma_{m}}^{n} \in H_{rm + s}(z)$ be the result
of cabling the first $m$ strands with $r$ strands and the last strand with $s$ strands.
Let $e_{\lambda}\in H_{rm}(z)$ and $e_{\mu}\in H_s(z)$ denote corresponding primitive idempotents and 
let $p_{\lambda,\mu} = e_{\lambda}^{\otimes r}\otimes e_{\mu}$ denote the corresponding projector
which by construction commutes with $\widetilde{\sigma_{m}}$.
We want to calculate the trace on $V_{\rho}$ of
$$X_{\lambda,\mu} = \pi_{\rho} \widetilde{\sigma_{m}}^{n} p_{\lambda,\mu}.$$
 
We now follow the argument from \cite{lin-zheng}.
If we take the tangle
$\sigma_{m}^{mn}$,
we see that the satellite
$$\widehat{\sigma_{m}^{mnK}}*(Q_{\lambda},\dots,Q_{\lambda},Q_{\mu}) = 
\widehat{\sigma_{1}^{nK}}*(Q_{\lambda}^{\cdot r}, Q_{\mu})
= v^{nK|\mu|} s^{-nK\kappa_{\mu}} T_{2}^{2nK}*(Q_{\lambda}^{\cdot r}, Q_{\mu}).$$
The last equality follows from the fact that 
$\beta_{2}^{2K}$ and $\sigma_{1}^{K}$ differ by adding $K$ full curls to the second strand
and that $Q_{\mu}$ is an eigenvector for the full curl with eigenvalue $v^{-|\mu|}s^{\kappa_{\mu}}$ by \cite{aiston-morton}.

We already know the expansion of $T_{2}^{2nK}(Q_1, Q_2)$ by Lemma \ref{torusformula}.
Therefore, for all $K>0$, we have
$$\Tr_{V_{\rho}} X_{\lambda,\mu}^{mK} = c_{\rho}v^{nK(|\mu| -|\rho|)} s^{nK(\kappa_{\rho} - \kappa_{\mu})}$$
where $Q_{\lambda}^{\cdot r}\cdot Q_{\mu} = \sum c_{\rho} Q_{\rho}.$
This implies that the eigenvalues of $X_{\lambda,\mu}^{m}$ are either
$v^{n(|\mu| -|\rho|)} s^{n(\kappa_{\rho} - \kappa_{\mu})}$ or $0$.
By the argument on page 11 of \cite{lin-zheng}, we have that
$$\Tr X_{\lambda,\mu} = a_{\rho} v^{n/m(|\mu| -|\rho|)} s^{n/m(\kappa_{\rho} - \kappa_{\mu})}$$
for some rational number $a_{\rho} \in \QQ$.

To determine $a_\rho$, we take the limit $v, s \rightarrow 1$.  This limit is well-defined since the idempotents of $H_n(z)$ extend over $z = 1$.
Under this limit, $\widetilde{\sigma_{m}}^{n}$  degenerates to the permutation 
$$\tau \times 1 \in \Sigma_{rm} \times \Sigma_s \subset \Sigma_{rm+s}$$
where $\tau$ acts cyclically on the $r$ $m$-tuples (since $(m,n)=1$) and trivially on the last $s$ terms.

The coefficient $a_{\rho}$ is the symmetric group character
$$\chi^{\rho}((\tau \times 1) e_{\lambda}^{\otimes m}\otimes e_{\mu}).$$
By restriction to the group algebra of $\Sigma_{rm}\times \Sigma_{s}$, this equals
$$\sum_{\sigma_{1}, \sigma_{2}} \mathrm{LR}^{\rho}_{\sigma_{1},\sigma_{2}} \chi^{\sigma_{1}}(\tau e_{\lambda}^{\otimes m})
\chi^{\sigma_{2}}(1\cdot e_{\mu})$$
where $\mathrm{LR}$ denotes Littlewood-Richardson coefficients.
The first term in the product is precisely the trace associated to the braid $\beta_{m}^{n}$ as calculated in \cite{lin-zheng}
and is given by the coefficient of $s_{\sigma_{1}}(z)$ in the plethysm $s_{\lambda}[p_m]$.  The second term is just
a delta function $\delta_{\sigma_{2},\mu}$.
Therefore,
$a_{\rho}$ is the coefficient of $s_{\rho}(z)$ in the symmetric function
$s_{\lambda}[p_{m}]\cdot s_{\mu}(z)$ which completes the proof.

Notice that Lemma \ref{spliceformula} implies integrality of $\frac{1}{m} (\kappa_{\rho} - \kappa_\mu)$ for all $\rho$ such that $a_\rho$ is nonzero.  This
can be proven directly using a skew version of the Murnaghan-Nakayama rule.

\end{document}